\newcommand{\be}{\begin{equation}}
\newcommand{\ee}{\end{equation}}
\newcommand{\ba}{\begin{eqnarray}}
\newcommand{\ea}{\end{eqnarray}}
\newcommand{\bi}{\begin{itemize}}
\newcommand{\ei}{\end{itemize}}
\newcommand{\bn}{\begin{enumerate}}
\newcommand{\en}{\end{enumerate}}
\newcommand{\bp}{\begin{proof}}
\newcommand{\ep}{\end{proof}}
\newcommand{\wt}{\ensuremath{\widetilde}}
\newcommand{\mr}{\ensuremath{\mathrm}}
\newcommand{\mc}{\ensuremath{\mathcal}}
\renewcommand{\bm}{\ensuremath{\mathbb }}
\newcommand{\mf}{\ensuremath{\mathfrak}}
\newcommand{\ms}{\ensuremath{\mathscr}}
\newcommand{\ov}{\ensuremath{\overline}}
\newcommand{\sm}{\ensuremath{\setminus}}
\newcommand{\intfty}{\ensuremath{\int _{-\infty} ^{\infty}} }
\newcommand{\Om}{\ensuremath{\Omega}}
\newcommand{\Ga}{\ensuremath{\Gamma}}
\newcommand{\La}{\ensuremath{\Lambda }}
\newcommand{\la}{\ensuremath{\lambda }}
\renewcommand{\H}{\ensuremath{\mathcal{H} }}
\newcommand{\J}{\ensuremath{\mathcal{J} }}
\newcommand{\K}{\ensuremath{\mathcal{K} }}
\newcommand{\R}{\ensuremath{\mathbb{R} }}
\newcommand{\Z}{\ensuremath{\mathfrak{Z} }}
\newcommand{\C}{\ensuremath{\mathbb{C} }}
\renewcommand{\L}{\ensuremath{\mathcal{L} }}
\newcommand{\N}{\ensuremath{\mathbb{N} }}
\newcommand{\T}{\ensuremath{\mathbb{T} }}
\newcommand{\D}{\ensuremath{\mathbb{D} }}
\newcommand{\symm}{\ensuremath{\mathscr{S}}}
\newcommand{\isom}{\ensuremath{\mathscr{V}}}
\newcommand{\lessim}{\lesssim}
\newcommand{\ip}[2]{\ensuremath{\left\langle {#1} , {#2} \right\rangle}}
\newcommand{\dom}[1]{\ensuremath{\mathrm{Dom} ({#1}) }}
\newcommand{\ext}[1]{\ensuremath{\mathrm{ext}  ({#1}) }}
\newcommand{\Ext}[1]{\ensuremath{\mathrm{Ext} ({#1}) }}
\newcommand{\Extu}[1]{\ensuremath{\mathrm{Ext} _U ({#1}) }}
\renewcommand{\dim}[1]{\ensuremath{\mathrm{dim} \left( {#1} \right) }}
\newcommand{\ran}[1]{\ensuremath{\mathrm{Ran} \left( {#1} \right) }}
\renewcommand{\ker}[1]{\ensuremath{\mathrm{Ker} ({#1}) }}
\newcommand{\im}[1]{\ensuremath{\mathrm{Im} \left( {#1} \right) }}
\newcommand{\re}[1]{\ensuremath{\mathrm{Re} \left( {#1} \right) }}
\newcommand{\sym}[2]{\ensuremath{\mathscr{S} _{#1} ({#2}) }}
\newcommand{\symn}[1]{\ensuremath{\mathscr{S} _{#1}}}
\numberwithin{equation}{section}
\newtheorem{thm}[subsection]{Theorem}
\newtheorem{claim}[subsection]{Claim}
\newtheorem{lemma}[subsection]{Lemma}
\newtheorem{prop}[subsection]{Proposition}
\newtheorem{cor}[subsection]{Corollary}
\theoremstyle{definition}
\newtheorem{defn}[subsection]{Definition}
\newtheorem{remark}[subsection]{Remark}
\newtheorem{eg}[subsection]{Example}
\begin{document}

\title{Extensions of symmetric operators I: The inner characteristic function case.}

\author{R. T. W. Martin}
\address{Department of Mathematics and Applied Mathematics, University of Cape Town, Cape Town, South Africa}
\email{rtwmartin@gmail.com}

\begin{abstract}

Given a symmetric linear transformation on a Hilbert space, a natural problem to consider is the characterization of its set of symmetric extensions. This problem
is equivalent to the study of the partial isometric extensions of a fixed partial isometry.  We provide a new function theoretic characterization of the set of all
self-adjoint extensions of any symmetric linear transformation $B$ with equal indices and inner Livsic characteristic function $\Theta _B$  by constructing a natural bijection between the set of self-adjoint extensions
and the set of all contractive analytic functions $\Phi$ which are greater or equal to $\Theta _B$. In addition we characterize the set of all
symmetric extensions $B'$ of $B$ which have equal indices in the case where $\Theta _B$ is inner.

\end{abstract}

\maketitle

\section{Introduction}

\label{intro}

The purpose of this paper is to study of the family of all closed symmetric extensions of a given closed simple symmetric linear transformation $B$ with equal deficiency indices $(n,n)$, $1 \leq n < \infty$ defined on a domain in a separable Hilbert space in the case where the Livsic characteristic function of $B$ is an inner function. For $ n \in \N \cup \{ \infty \} $, $\sym{n}{\H}$ will denote the set of all closed simple symmetric linear transformations with indices $(n,n)$ defined in a separable Hilbert space $\H$. More generally $\symn{n}$ will denote the family of all closed simple symmetric linear transformations with indices $(n,n)$ defined in some
separable Hilbert space, and $\ms{S}$ the set of all closed simple symmetric linear transformations with equal indices defined in some separable Hilbert space.

If $A$ is a symmetric linear transformation which extends $B \in \sym{n}{\H}$ and $\dom{A}$ is also contained in $\H$ then we call $A$ a canonical extension of $B$. If, on the other hand $A$ is symmetric in $\K$ where
$\K \supsetneq \H$, then we call $A$ a non-canonical extension of $B$. The set of all canonical extensions of $B$ can be completely characterized by the set of all partial isometries between the deficiency subspaces $\ker{B^* - i}$ and $\ker{B^* +i}$, see for example \cite[Chapter VII]{Glazman}. Our goal is to provide a new characterization the set of extensions, canonical and non-canonical in the special case where the characteristic function $\Theta _B $ is inner. (Recall that in this case $B$ is unitarily equivalent to multiplication by $z$ in a model subspace $K^2 _{\Theta _B} = H^2 (\C _+ ) \ominus \Theta _B H^2 (\C _+) $ of Hardy space \cite{Martin-uni, Martin-dB, AMR}.)

We will begin with the study of the self-adjoint extensions of $B$, denoted $\Ext{B}$, and show that there is a bijective correspondence between $A \in \Ext{B}$ and the set of all contractive analytic (matrix) functions
$\Phi _A$ which obey: $$ \Phi _A \geq \Theta _B, $$ see Theorem \ref{bijection}.  Here, given contractive analytic matrix functions $\Phi , \Theta$ on $\C _+$,
we say that $\Theta \leq \Phi$ provided that $\Theta ^{-1} \Phi$ is contractive and analytic on $\C _+$. This provides an alternative to the classical results of M.G. Krein (see \emph{e.g} \cite[Theorem 6.5]{Krein} for the $(1,1)$ case) which are formulated in terms of generalized resolvents and $R$-functions. Our characterization has the advantage of providing a natural function-theoretic connection between the Livsic characteristic function of $B \in \symm$ and the set of its self-adjoint extensions.

We will also study a natural partial order on $\symm$: we say that $B_1 \lessim B_2$ for $B_1, B_2 \in \symm$ if $B_1 \simeq B_1 ' \subset B_2$, where $\simeq$ denotes unitary equivalence and we use the $\subset$ notation to denote when one linear transformation is an extension of another. In words, $B_1$ is less than or equal to $B_2$ if $B_2$ is an extension of $B_1 '$ where $B_1 '$ is unitarily equivalent to $B_1$.  Application of the Cayley transform, which is a bijection from $\symm$ onto $\isom$, the set of all partial isometries with equal indices, converts this into a partial order on $\isom$. Modulo unitary equivalence, this is the same as the partial order previously defined by Halmos and McLaughlin on partial isometries in \cite{Halmos}. In the case where $\Theta _{B _1}$ is an inner function, we provide necessary and sufficient conditions on $\Theta _{B_2}$ so that $B_1 \lessim B_2$ in Theorem \ref{pochar}.

Many of these results will be achieved using the concept of a generalized model. This is a reproducing kernel Hilbert space theory approach which generalizes the concept of a model for a symmetric operator as defined in \cite{AMR}.

\section{Preliminaries}

\label{prelim}

Recall that a linear transformation $B$ is simple, symmetric and closed with deficiency indices $(n,n)$ if it is defined on a domain $\dom{B}$ contained in a separable Hilbert space $\H$ and has the following properties:
\be \ip{Bx}{y}= \ip{x}{By}, \quad \quad \forall x,y \in \dom{B}, \quad \quad B \ \mbox{is \emph{symmetric}}; \ee
\be \bigcap _{z \in \C \sm \R} \ran{B-z}  = \{ 0 \}, \quad \quad B \  \mbox{is \emph{simple}}; \label{simple} \ee
\be  \{ (x , Bx) | \  x \in \dom{B} \} \ \mbox{is a closed subset of} \ \H \oplus \H, \quad B \ \mbox{is \emph{closed}}; \ee
$$ n_- := \dim{\ran{B-i} ^\perp} = n = \dim{\ran{B+i} ^\perp} =: n_+,  $$
\be \quad \quad \quad B \  \mbox{\emph{has equal deficiency indices}} \ (n_+, n_-). \ee

Condition (\ref{simple}) can be restated equivalently as: $B$ is simple if and only if there is no non-trivial subspace reducing for $B$ such that the restriction of $B$ to the intersection of its domain with this subspace is
self-adjoint.  For many of our results we will need to assume that $n < \infty$ is finite.

A partial isometry $V$ is called simple, or c.n.u. (completely non-unitary) if it has no unitary restriction to a proper (and non-trivial) reducing subspace.  The deficiency indices for $V$ are the pair of non-negative integers $(n_+ , n_-)$ defined by
$$ n_+ := \mr{dim} \left( \ker{V} \right) \quad \mbox{and} \quad n_- := \mr{dim} \left( \ran{V} ^\perp \right),$$
and it is not difficult to see that these are the same as the defect indices of $V$ as defined in \cite{NF}.

There is a bijective correspondence between $\ms{S} _n (\H)$ and $\ms{V} _n (\H)$ which we now describe:
Given a simple symmetric linear transformation $B \in \ms{S} _n (\H)$ and $z \in \C \setminus \R$, let $Q_z$ denote the
projection onto $\ran{B - \ov{z} }$. The Cayley transform $V _B$ of $B$ is the partial isometry
\be V_B := b(B)Q_{i} = (B-i )(B+i ) ^{-1} Q_i  ;  \quad \quad b(z) := \frac{z-i}{z+i},  \ee
where $b(B) = (B-i )(B +i  )^{-1}$ is a well-defined isometry from $Q_i \H = \ran{B+i I}$ onto $Q_{-i} \H = \ran{B-i }$.
Note that $\ker{V} = \ran{B + i }^{\perp}$ and $\ran{V} ^\perp = \ran{B - i }^{\perp}$, and so it follows that the deficiency indices of $V_B$
are the same as those of $B$.

Conversely suppose that $V$ is a simple partial isometry on $\H$ with defect indices $(n_+ , n_- )$. One can construct a
symmetric linear transformation $B_V$ by defining
$$ \dom{B_V} := (1-V) \ker{V} ^\perp, $$ and
$$ B_V f = b^{-1} (V) f =  i (1 + V) (1 - V)^{-1}f,  \quad f \in \dom{B_V}; \quad b^{-1} (z) := i \frac{1+z}{1-z}.$$ Again it is easy to check that $B_V$ and $V$ have the
same deficiency indices. One can further verify that $B_{V_B} = B$ and $V_{B_V} = V$ for any symmetric linear transformation $B$
and partial isometry $V$, respectively.  This shows that the maps $B \mapsto V_B$ and $V \mapsto B_V$ are inverses of each other so that
these maps are bijections between $\ms{S}$ and $\ms{V}$. We will use this bijection between $\isom$ and $\symm$ to formulate problems
in whichever setting is most convenient, and to obtain equivalent results for both classes of linear transformations.

Given $B \in \sym{n}{\H}$, $n < \infty$, one can construct a complete unitary invariant $\Theta _B$, called the Livsic characteristic function as follows: Pick orthonormal bases $\{ u _j \} _{j=1} ^n$ and $\{v _j\} _{j=1} ^n$ for $\ran{B+i} ^\perp$ and $\ran{B-i} ^\perp$ respectively and choose arbitrary (not necessarily orthonormal) bases $\{ w _j (z) \} _{j=1} ^n$ for $\ran{B-\ov{z} }^\perp$. Let
\be B(z) := \left[ \ip{ w_j (z)}{u_k} \right] _{1 \leq j,k \leq n} , \label{B}\ee  and
\be  A(z) := \left[ \ip{w_j (z)}{v_k} \right] _{1 \leq j,k \leq n}. \label{A}\ee  The Livsic characteristic function is then \cite{Livsic2}
\be \Theta _B (z) := b(z) B^{-1} (z) A(z), \label{Livcharfun} \ee  and this can be shown to be a contractive $n\times n$ matrix-valued analytic function on $\C _+$, the upper half-plane. Note that the characteristic function $\Theta _B$ always
vanishes at $z=i$. Different choices of bases in
the definition yield a new characteristic function $\wt{\Theta} _B$ which is related to the first by
$$ \wt{\Theta} _B (z) = R \Theta _B (z) Q, $$ where $R,Q$ are fixed unitary matrices.  Two Livsic characteristic functions $\Theta _1 , \Theta _2$ are said to coincide or to be equivalent if they are related in this way.

For most of this paper we will assume that $\Theta _B$ is an inner function, \emph{i.e.} $\Theta _B$ has non-tangential boundary values on $\R$ almost everywhere with respect to Lebesgue measure, and these non-tangential boundary values are unitary matrix-valued. In this case $B \simeq Z_{\Theta _B}$, where $Z_{\Theta _B} \in \ms{S} \left( K^2 _{\Theta_B} \right)$ is the symmetric operator of multiplication by $z$ on the domain
$$ \dom{Z_{\Theta _B} } = \{ f \in K^2 _{\Theta _B} | \ zf \in K^2 _{\Theta _B} \}, $$ in the model space $K^2 _{\Theta _B} = H^2 \ominus \Theta _B H^2$, and here $H^2 = H^2 (\C _+)$ is the Hardy space of the upper half-plane.

As shown in \cite{AMR}, one can also define the Livsic characteristic function for the case where $n = \infty$, and this new definition coincides with the old one for $n < \infty$.  As first shown by M.S. Livsic, the Livsic
characteristic function is a complete unitary invariant for $\sym{n}{\H}$:

\begin{thm}
Linear transformations $B_1 \in \sym{n}{\H _1}$ and $B_2 \in \sym{n}{\H _2}$ are unitarily equivalent if and only if their characteristic functions $\Theta _1 , \Theta _2 $ are equivalent. \label{Liv}
\end{thm}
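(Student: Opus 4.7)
The plan is to prove each direction separately; this is a classical result of Livsic, so I will indicate the structure rather than redoing the full computation.

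For the easy (forward) direction, suppose $U : \H _1 \to \H _2$ is a unitary with $U B_1 = B_2 U$ on $\dom{B_1} = U^* \dom{B_2}$. Then $U$ intertwines the resolvents and hence carries $\ran{B_1 - \ov{z}}^\perp$ isometrically onto $\ran{B_2 - \ov{z}}^\perp$ for every $z \in \C \sm \R$. Transporting bases $\{u_j\}, \{v_j\}, \{w_j(z)\}$ for the defect subspaces of $B_1$ by $U$ gives bases for the defect subspaces of $B_2$, and since $U$ preserves inner products the matrices $A(z), B(z)$ in (\ref{A}), (\ref{B}) computed from these transported bases are literally the same on both sides. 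Hence the characteristic functions computed this way coincide; for any other choice of $B_2$-bases the characteristic function differs by fixed unitary matrices on the left and right, which is precisely the coincidence relation for $\Theta _1, \Theta _2$.

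For the reverse direction, after absorbing the unitaries $R, Q$ relating $\Theta _1$ and $\Theta _2$ into a fresh choice of bases on the $B_2$-defect subspaces, I may assume that $\Theta _1 = \Theta _2 =: \Theta$. In the inner case that drives the rest of the paper, the identification $B_j \simeq Z _{\Theta _{B_j}}$ on $K^2 _{\Theta _{B_j}}$, stated in the paragraph just before the theorem, immediately gives $B_1 \simeq Z _\Theta \simeq B_2$.

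For a general (not necessarily inner) $\Theta$, which is still in force in the present theorem, one instead invokes a functional model. Simplicity together with (\ref{simple}) forces $\H _j$ to equal the closed linear span of the full family of deficiency vectors $\{w _k ^{(j)}(z) : z \in \C \sm \R, \ 1 \leq k \leq n\}$, and a resolvent-identity computation shows that the Gram matrix $\ip{w _k ^{(j)}(z)}{w _l ^{(j)}(w)}$ is determined by $\Theta _{B_j}$ alone. When $\Theta _1 = \Theta _2$ the Gram matrices agree, so the assignment $w _k ^{(1)}(z) \mapsto w _k ^{(2)}(z)$ extends by linearity and closure to a unitary $U : \H _1 \to \H _2$; a second resolvent-identity check confirms $U B_1 = B_2 U$ on domains. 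The main obstacle is thus the explicit reproducing-kernel formula expressing the deficiency Gram matrix in terms of $\Theta _B$ alone --- essentially the content of the generalized-model construction alluded to at the end of the introduction.
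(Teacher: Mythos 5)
A preliminary remark: the paper itself gives no proof of Theorem \ref{Liv} --- it is quoted as Livsic's classical completeness result, with \cite{AMR} covering $n=\infty$ --- so your sketch can only be measured against the standard model-theoretic argument, which is also the machinery the paper partially rebuilds in its Herglotz-space section. Your forward direction (transporting orthonormal deficiency bases by the intertwining unitary and absorbing basis changes into the fixed unitaries $R,Q$) is correct.

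The reverse direction, however, has a genuine gap. The pivotal claim that the Gram matrix $\left[ \ip{w_k^{(j)}(z)}{w_l^{(j)}(w)} \right]$ is ``determined by $\Theta_{B_j}$ alone'' fails for the arbitrary bases $w_k(z)$ permitted in (\ref{B})--(\ref{Livcharfun}): replacing $w_j(z)$ by $\sum_l c_{jl}(z)\, w_l(z)$ with $C(z)=[c_{jl}(z)]$ invertible replaces $A(z)$ and $B(z)$ by $C(z)A(z)$ and $C(z)B(z)$, hence leaves $\Theta_B = b\, B^{-1}A$ unchanged, while it multiplies the Gram matrix on the left by $C(z)$ and on the right by $C(w)^*$. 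So the assignment $w_k^{(1)}(z)\mapsto w_k^{(2)}(z)$ need not be isometric, and before your argument can start you must fix a $\Theta$-canonical normalization of the deficiency vectors --- for instance normalize so that $B(z)\equiv \bm{1}$, equivalently model them on the kernel vectors $K_{\ov{z}}e_j$ of $\L (\Theta )$, after which the Gram matrix takes the $\Theta$-determined form $\left( \bm{1} - \Theta(z)\Theta(w)^* \right)/\left(1-b(z)\ov{b(w)}\right)$ up to normalization (cf. equation (\ref{rkgamma})). Showing that such a coherent choice exists, that the resulting correspondence extends to a unitary of $\H _1$ onto $\H _2$, and that it carries $\dom{B_1}$ onto $\dom{B_2}$ and intertwines the operators (your ``second resolvent-identity check'' hides the real work of recovering the graph of $B$ from the deficiency data, which is where the division-property/model analysis enters) is precisely the representation theorem $B\cong \mf{Z}_{\Theta_B}$ of \cite{AMR} --- exactly the step you defer as ``the main obstacle.'' Note also that within this paper you cannot close the gap by citing Corollary \ref{lessthan}, since that corollary is deduced from Theorem \ref{Liv}; and the inner-case identification $B\simeq Z_{\Theta_B}$ is quoted from \cite{Martin-dB,AMR}, where it is proved by this same model construction, so invoking it amounts to assuming the substance of the reverse implication rather than proving it.
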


One of the results of this paper, Theorem \ref{equivalence} will provide a similar result for all $A \in \Ext{B}$. Given any $A \in \Ext{B}$, we will define a characteristic function $\Phi _A = \Phi [A ; B]$ which has
the property that $\La _A \geq \Theta _B$ where $\La _A$ is a Frostman shift of $\Phi _A$ vanishing at $i$.  Theorem \ref{equivalence} will show that $\Phi _{A_1} = \Phi _{A_2}$ if and only if $A_1 \simeq A_2$ via a unitary which is the identity when restricted to $\H$.

Here is our formal definition of $\Ext{B}$:

\begin{defn}
Given $V \in \isom _n (\H )$, let $\Ext{V}$ denote the set of all unitary operators $U$ such that
\bn
    \item $U$ is an extension of $V = b(B)$, \emph{i.e.} $V \subseteq U$ ($U | _{\ker{V} ^\perp} = V| _{\ker{V} ^\perp}$) and $U$ is unitary in some Hilbert space $\mc{K} \supset \mc{H}$.
    \item $\mc{K}$ is the smallest reducing subspace for $\mr{vN} (U)$, the von Neumann algebra generated by $U$.
\en

Given $B \in \sym{n}{\mc{H}}$, we will define $\Ext{B}$ to be a relabeling of the set $\Ext{b(B)}$. Namely if $U \in \Ext{b(B)}$, and $1 \notin \sigma _p (U)$, the set of eigenvalues of $U$,
then we define $A $ to be the self-adjoint operator $b^{-1} (U)$. If however $U \in \Ext{b(B)}$ and $1 \in \sigma _p (U)$, then we formally define $A$ by $b^{-1} (U)$. In this
case $A$ is not a well defined linear transformation, it is just a renaming of $U \in \Ext{b(B)}$ with the understanding that $A_1 = A_2$ for $A_1 = b^{-1} (U_1 ), A_2 = b^{-1} (U_2)$
and $U_1 , U_2 \in \Ext{b(B)}$ if and only if $U_1 = U_2$. $\Ext{B}$ is then defined to be the set of all such $A$. In this way there is a bijection between $\Ext{b(B)}$ and $\Ext{B}$.
\label{extend}
\end{defn}

Recall that the subset notation $B \subset A$ means that $A$ is an extension of $B$, \emph{i.e.} $\dom{B} \subset \dom{A}$ and $A|_{\dom{B} } =B$. The subset notation
$V \subseteq U$ for partial isometries $V, U$ means that $ U | _{\ker{V} ^\perp} = V | _{\ker{V} ^\perp}$. For simple symmetric linear transformations $B_1, B_2$ we have that
$B_1 \subset B_2$ if and only if $b(B_1 ) \subseteq b(B _2 )$.

\begin{remark}
    If $B$ is densely defined then every unitary extension $U$ of $b(B)$ does not have $1$ as an eigenvalue \cite[Lemma 6.1.3]{Martin-uni},\cite{Livsic}, so that every element of
$\Ext{B}$ is a densely defined self-adjoint operator. Note that if $A \in \Ext{B}$ and $A = b^{-1} (U)$ for some $U \in \Ext{b(B)}$ such that $1 \notin \sigma _p (U)$, then the two conditions of the above definition are equivalent to
\bn
    \item $A$ is an extension of $B$, \emph{i.e.} $B \subset A$ and $A$ is self-adjoint in some Hilbert space $\K \supset \H$.
    \item $\mc{K}$ is the smallet reducing subspace for $\mr{vN} (A)$, the von Neumann algebra generated by $b(A)$.
\en
However if $B$ is not densely defined, then one
can find canonical unitary extensions $U$ of $V = b(B)$ which have $1$ as an eigenvalue \cite[Lemma 6.1.3]{Martin-uni},\cite{Livsic2}. In this exceptional case where $U$ is a unitary
extension of $b(B)$ and $1 \in \sigma _p (U)$, then we will always work with the unitary extension $U$ associated with $A = b^{-1} (U)$.
If $1 \in \sigma _p (U)$, one could define $A := b^{-1} (U) P_U (\T \sm \{ 1 \} ) = b^{-1} (U) \chi _{\T \sm \{1 \} } (U)$, where $\chi _\Om$ is the characteristic function of $\Om$, $\T$ is the unit circle and $P_U (\T \sm \{ 1 \} ) =\chi _{\T \sm \{1 \} } (U)$ projects onto the orthogonal complement of the eigenspace to eigenvalue $1$ of $U$. However we will have no need for this construction, and in this exceptional case where
$1 \in \sigma _p (U)$ for $U \in \Ext{b(B)}$ we will simply work with $U \in \Ext{b(B)}$ instead of its inverse Cayley transform $A = b^{-1} (U)$ in $\Ext{B}$. In this paper we are really
studying $\Ext{b(B)}$, but given $U \in \Ext{b(B)}$ we prefer to work with $A = b^{-1} (U) \in \Ext{B}$ whenever this is well-defined.
\end{remark}

It will also be convenient to define $\Extu{B}$ to be the set of all self-adjoint linear transformations $A$ on $\K$ for which $A \in \Ext{UBU^*}$ for some isometry $U :\H \rightarrow \K$.

The set $\Ext{B}$ is called the set of extensions of $B$. In the case where $\mc{K} = \mc{H}$, we say that $A$ is a canonical self-adjoint extension of $B$. Recall that the canonical self-adjoint extensions $A$ of $B$ can
all be obtained by first computing the Cayley transform $V := b(B)$, extending this by a $rank-n$ isometry $U: \dom{V} ^\perp \rightarrow \ran{V} ^\perp$ to obtain a unitary extension $V_U$ of $V$, and then taking the
inverse Cayley transform to obtain a self-adjoint linear transformation $A := b^{-1} (V_U)$.

\section{Linear relations}

\label{symrel}

In the case where $B$ is not densely defined, its adjoint $B^*$ is not a linear operator. Instead $B^*$ can be realized as a linear relation, and we will dicuss the basic facts about
linear relations that will be needed in this section. The material from this section is taken primarily from \cite{Silva-entire} and \cite[Section 1.1]{Habock}. A \emph{linear relation} $L$ is defined
to be a subspace of $\H \oplus \H$. Note that $L = \mf{G} (T)$ is the graph of some closed linear operator $T$ provided that $L$ is closed and $(0, f) \in L$ implies that $f=0$.

Given a linear relation $L$, one defines the adjoint linear relation $L^*$ by
\be L^* := \left\{ (g_1 ,g_2 ) | \  \ip{f_1 }{g_2} = \ip{f_2 }{g_1} \quad  \forall (f_1, f_2) \in L  \right\}. \label{adj} \ee
$L$ is called symmetric if $L \subset L^*$ and $L$ is self-adjoint if $L = L^*$. Clearly if $B$ is a closed symmetric linear operator with adjoint $B^*$ then the graph, $\mf{G} (B)$ of $B$ is a closed symmetric
linear relation, and the graph, $\mf{G} (B^* )$ of $B^*$ is the adjoint relation to $\mf{G} (B)$.

In this paper we will be considering closed symmetric linear transformations $B$ with deficiency indices $(n,n)$, which are not necessarily densely defined.  If this is the case then this means that $B$ does not have a uniquely defined adjoint operator, and it will be convenient to identify $B$ with its graph $\mf{G} (B)$:
$$ \mf{G} (B) := \{ (f, Bf) | \ \ f \in \dom{B} \},$$ in which case $$ \mf{G} (B ) ^* = \{ (g_1 ,g_2 ) | \ \ \ip{f}{g_2} = \ip{Bf}{g_1} \quad \forall f \in \dom{B} \} $$ is a closed linear relation but not the graph of a linear operator. Indeed, observe that if $g \perp \dom{B}$ then by equation (\ref{adj}),
$(0,g) \in \mf{G} (B) ^*$ since  $$ \ip{f}{g} = \ip{Bf}{0},$$ for every $(f, Bf) \in \mf{G} (B)$. For convenience we will simply write $B^* $ for $\mf{G} (B) ^*$ in the case where $B$ is not densely defined. Note that
$$ B^* (0) := \{ f \in \H | \ (0, f) \in \mf{G} (B) ^* = B^* \}  = \ov{\dom{B} } ^\perp .  $$ One can show that if $B$ has deficiency indices $(n,n)$ that the co-dimension of $\dom{B}$ is at most $n$:

\begin{lemma}
If $B \in \sym{n}{\H}$, the orthogonal complement of $\dom{B}$ is at most $n-$dimensional.
\end{lemma}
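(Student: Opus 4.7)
The plan is to pass to the Cayley transform $V = b(B) \in \isom _n (\H)$, which is a partial isometry with $\dim{\ker{V}} = n$ satisfying $\dom{B} = (1-V)\ker{V} ^\perp$, as reviewed in Section~\ref{prelim}. The intuition is that orthogonality to $\dom{B}$ imposes enough constraint to inject $\dom{B} ^\perp$ into the $n$-dimensional space $\ker{V}$, and that is exactly what I will try to produce.

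First I would dualize the identity $\dom{B} = (1-V)\ker{V} ^\perp$. Writing $\ip{(1-V)h}{g} = \ip{h}{(1-V^*)g}$ and letting $h$ range over $\ker{V} ^\perp$ shows that $g \in \dom{B} ^\perp$ if and only if $(1-V^*)g \in \ker{V}$. Because $V^*$ always maps into $\ker{V} ^\perp = \ran{V^*}$, the assignment
$$ \Phi(g) := g - V^* g $$
is a well-defined linear map $\Phi : \dom{B} ^\perp \to \ker{V}$, and the lemma reduces to showing that $\Phi$ is injective.

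For injectivity, suppose $V^* g = g$. Since $V^* g \in \ker{V} ^\perp$ we get $g \in \ker{V} ^\perp$, and since $V^*$ vanishes on $\ran{V} ^\perp$ and is isometric on $\ran{V}$, the equality $\|g\| = \|V^* g\|$ forces $g \in \ran{V}$. Applying $V$ to $V^* g = g$ and using $VV^* = P_{\ran{V}}$ then yields $Vg = g$, so $g$ is a fixed point of $V$ lying in $\ran{V} \cap \ker{V} ^\perp$. Here the key observation is that the very hypothesis that $B = i(1+V)(1-V)^{-1}$ be a \emph{single-valued} linear transformation (rather than a multi-valued linear relation, as discussed at the start of Section~\ref{symrel}) is equivalent to $(1-V) | _{\ker{V} ^\perp}$ being injective, i.e.\ to $V$ having no nonzero fixed point in $\ker{V} ^\perp$. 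Hence $g = 0$, $\Phi$ is injective, and $\dim{\dom{B} ^\perp} \leq \dim{\ker{V}} = n$.

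The main, quite mild, obstacle is pinpointing the right map $\Phi$ and recognizing that its injectivity is precisely the condition ensuring $B$ is an operator and not merely a relation; once this is in hand, the rest is a one-line inner-product computation. Notably the argument never uses simplicity of $B$, and it works identically whether $n$ is finite or infinite — the finite-index hypothesis only serves to turn the qualitative bound $\dim{\ker{V}} = n$ into the numerical statement of the lemma.
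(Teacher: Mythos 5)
Your proof is correct and follows essentially the same route as the paper: dualize $\dom{B} = (1-V)\ker{V} ^\perp$ to obtain the map $g \mapsto (1-V^*)g$ from $\dom{B} ^\perp$ into the $n$-dimensional space $\ker{V}$, and show it is injective. The only difference lies in the last step: the paper rules out a nonzero solution of $V^* g = g$ by appealing to the simplicity of $V$ (no eigenvalue $1$), whereas you derive $Vg = g$ with $g \in \ker{V} ^\perp$ and invoke the single-valuedness of $B$, \emph{i.e.} the injectivity of $(1-V)$ on $\ker{V} ^\perp$ — a valid and slightly more economical justification, since it does not use simplicity at all.
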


\begin{proof}
    If $V = b(B)$ then $\ker{V}$ is $n-$dimensional, and $\dom{B} = (1-V) \ker{V} ^\perp$. If $f \perp \dom{B}$, then
$$ 0 = \ip{f}{(1-V) \ker{V} ^\perp} = \ip{(1-V^*) f}{\ker{V} ^\perp}, $$ and so $(1-V^*) f \in \ker{V}$ which is $n-$dimensional.
Now $(1 -V^* )f \neq 0$ as then $f$ would be an eigenfunction to eigenvalue $1$ and $V$ would not be simple. It follows that the dimension
of $\dom{B} ^\perp$ is at most $n$ as otherwise we could find a $g \in \dom{B} ^\perp$ such that $ (1-V^* )g =0$.
\end{proof}

For $z \in \C$ define $$ (B^* -z) := \{ (f , g -zf) | \ \ (f,g) \in B^* \},$$ and $$ \ker{B^* -z} := \{ f \in \H | \ (f , 0) \in (B^* -z) \}.$$ Then, as in the case of densely defined $B$, it follows that
$$ \ker{B^* -z} = \ran{B-\ov{z} } ^\perp, $$ so that $$ \H = \ran{B-\ov{z}} \oplus \ker{B^* -z},$$ for any $z \in \C \sm \R$.

If $B$ is a symmetric linear transformation then one can show, whether or not $B$ is densely defined, that
$$ \dim{ \ker{B^* -z} } \quad \mbox{is constant for} \ z \in \C _\pm,$$ so that one can define $n_\pm =  \dim{ \ker{B^* -z} }$ for $z \in \C _\pm$.   For lack of a reference, here is an elementary proof of this fact.

\begin{prop}
    Let $B$ be a symmetric linear transformation in a separable Hilbert space $\H$.  Then $\dim{ \ker{B^* -z}}$ is constant in $\C _+$ and in $\C _-$.
\end{prop}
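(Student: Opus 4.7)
The plan is to establish that the orthogonal projection $P_z$ onto $\ker{B^* - z}$ depends continuously on $z$ in operator norm, and then invoke the standard fact that two orthogonal projections at norm distance strictly less than $1$ have the same rank. The crucial tool is the elementary symmetry bound: for $z \in \C \sm \R$ and $f \in \dom{B}$, writing $(B - \ov{z})f = (B - \re{z})f + i\,\im{z}\,f$ and using that $\ip{(B - \re{z})f}{f} \in \R$ (because $B$ is symmetric) gives the Pythagorean identity
$$ \|(B - \ov{z}) f\|^2 \;=\; \|(B - \re{z}) f\|^2 + (\im{z})^2 \|f\|^2 \;\geq\; (\im{z})^2 \|f\|^2 . $$
Hence $B - \ov{z}$ is bounded below, $\ran{B - \ov{z}}$ is closed, and the decomposition $\H = \ran{B - \ov{z}} \oplus \ker{B^* - z}$ recorded above is a genuine orthogonal decomposition.

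Next I would prove the perturbation estimate $\|(1 - P_z) g\| \leq |z - z_0|/|\im{z}| \cdot \|g\|$ for $g \in \ker{B^* - z_0}$, valid whenever $z$ and $z_0$ lie in the same half-plane. Given such $g$, set $h := (1 - P_z) g \in \ran{B - \ov{z}}$ and write $h = (B - \ov{z}) f$ for the unique $f \in \dom{B}$ (unique because $B - \ov{z}$ is injective by the lower bound). Since $g \perp \ran{B - \ov{z_0}}$,
$$ \ip{g}{h} \;=\; \ip{g}{(B - \ov{z}) f} \;=\; \ip{g}{(B - \ov{z_0}) f} + (\ov{z_0} - \ov{z}) \ip{g}{f} \;=\; (\ov{z_0} - \ov{z}) \ip{g}{f} . $$
On the other hand $\|h\|^2 = \ip{g}{h}$ (as $g - h = P_z g \perp h$), and $\|f\| \leq \|h\|/|\im{z}|$ by the same bound, so $\|h\|^2 \leq |z - z_0| \cdot \|g\| \cdot \|h\|/|\im{z}|$, yielding the estimate.

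Swapping the roles of $z$ and $z_0$ gives the symmetric bound $\|(1 - P_{z_0}) g\| \leq |z - z_0|/|\im{z_0}| \cdot \|g\|$ for $g \in \ker{B^* - z}$. Together these control $\|P_z - P_{z_0}\|$ via the well-known identity $\|P_z - P_{z_0}\| = \max\bigl(\|(1 - P_z) P_{z_0}\|,\,\|(1 - P_{z_0}) P_z\|\bigr)$ for orthogonal projections (which follows from $(P_z - P_{z_0})^2 = P_z(1 - P_{z_0}) P_z + (1-P_z) P_{z_0}(1-P_z)$). Consequently $\|P_z - P_{z_0}\| < 1$ on a small neighbourhood of $z_0$ inside the same half-plane, whence $\dim{\ran{P_z}} = \dim{\ran{P_{z_0}}}$; equivalently $z \mapsto \dim{\ker{B^* - z}}$ is locally constant on $\C \sm \R$ and, by connectedness of $\C_+$ and $\C_-$, constant on each. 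The one genuinely delicate step is the perturbation estimate, whose success hinges on expressing the right-hand side entirely in terms of $\|g\|$ via the closed-range lower bound on $B - \ov{z}$; the remaining steps are standard operator-theoretic boilerplate and are essentially independent of whether or not $B$ is densely defined.
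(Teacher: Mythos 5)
Your proof is correct and takes essentially the same route as the paper's: the symmetry identity $\|(B-\ov{z})f\|^2 = \|(B-\re{z})f\|^2 + (\im{z})^2\|f\|^2$ gives the lower bound that makes the deficiency projections norm-continuous in each half-plane, and the standard fact that two orthogonal projections at distance less than $1$ have ranges of equal dimension, plus connectedness of $\C_+$ and $\C_-$, finishes the argument. The only differences are cosmetic: the paper estimates $\|Q_w - Q_z\|$ for the projections $Q_z$ onto $\ran{B-z}$ (the same quantity, since $Q_z = 1 - P_{\ov{z}}$) via a triangle inequality, whereas you work directly with the projections onto $\ker{B^*-z}$ and invoke the max-formula $\|P-Q\| = \max\left( \|(1-P)Q\| , \|(1-Q)P\| \right)$.
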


\begin{proof}
    Given $w \in \C \sm \R$ let $P_w := $ projection onto $\ker{B^* -w } = \ran{B -\ov{w} } ^\perp$, and let $Q_w :=$ projection onto $\ran{B-w}$ so that
$Q_w = 1 - P _{\ov{w}}$.

Now fix $w \in \C \sm \R$.  Choose any $f \in Q_w \H$ of unit norm, $\| f \| =1$. Since $f \in \ran{B-w}$, we have that $f = (B-w) g$ for some $g \in \dom{B}$.
Now $B-w$ is bounded below, an easy calculation shows that for any $g \in \dom{B}$:
$$ \| (B-w) g \| ^2 = \| (B-\re{w} ) g \| ^2 + | \im{w} | ^2 \|g \| ^2 \geq |\im{w} | ^2 \| g \| ^2.$$
Hence $$ \| g \| \leq \frac{ \| (B-w) g \| }{|\im{w}|} =  \frac{ \| f \| }{ | \im{w} | } = \frac{1}{| \im{w} | }.$$

Now choose $z$ in the same half-plane as $w$ and consider:
\ba  Q_z Q_w f & = &  Q_z f = Q_z (B-w)g = Q_z \left( (B-z) g + (z-w) g \right) \nonumber \\
& = & (B-z) g + (z-w) Q_z g. \nonumber \ea
It follows that
\ba  (Q_w - Q_z Q_w ) f & = & (B-w) g - (B-z)g  - (z-w) Q_z g = (z-w) (1 - Q_z) g \nonumber \\
&  = & (z-w) P_{\ov{z} } g. \nonumber \ea
This implies that
$$\| (Q_w -Q_z Q_w) f \| \leq |z-w | \| g \| \leq \frac{|z-w| }{| \im{w} | }.$$
Since $f$ was an arbitrary norm one vector in $Q_w \H$ we conclude that
$$ \| Q_w - Q_z Q_w \| \leq  \frac{|z-w| }{| \im{w} | }.$$
Taking adjoints it follows that we also have
$$ \| Q_w - Q_w Q_z \| \leq  \frac{|z-w| }{| \im{w} | }.$$

Now
\ba  \| Q_w - Q_z \| & = & \| Q_w - Q_w Q_z + Q_w Q_z - Q_z \| \nonumber \\
& \leq & \| Q_w - Q_w Q_z \| + \| Q_z - Q_w Q_z \| \nonumber \\
& \leq & \frac{|z-w| }{| \im{w} |} + \frac{|z-w| }{| \im{z} |}. \nonumber \ea
For fixed $w \in \C _+$ or $\C _-$, this is less than one for all $z$ in a small enough neighbourhood of $w$.

It follows that for $z$ close enough to $w$ we have
$$ \| P_{\ov{w}} - P _{\ov{z}} \| = \| (1-Q_w) - (1- Q_z ) \| = \| Q_w - Q_z \| < 1, $$
so that by \cite[Section 34]{Glazman} $P_{\ov{z} } \H $ and $P_{\ov{w}} \H$ have the same dimension.
It follows that the dimension of $P_z \H = \ker{B^* -z} = \ran{B-\ov{z} } ^\perp$ is constant for $z \in \C _+$, and for $z \in \C _-$.
\end{proof}

\section{Herglotz Spaces}

\label{Herglotz}

In this section we will show that any $B \in \ms{S} _n$ is unitarily equivalent to the operator of multiplication by $z$ in a certain space of analytic functions called a Herglotz space. Assume that $n< \infty$.

\subsection{Herglotz Functions}

It will be convenient to begin with a brief review of the Nevanlinna-Herglotz representation theory of Herglotz functions on both the unit disk $\D$ and the upper half-plane $\C _+$. Let $g$ be a $\C ^{n\times n}$-valued
Herlglotz function on $\D$, \emph{i.e.} an analytic function with non-negative real part. Here $\C ^{n\times n}$ is our notation for the $n\times n$ matrices over $\C$. Then by the Herglotz representation theorem there is a unique positive Borel $\C ^{n\times n}$-valued measure on the unit circle $\T$
such that
$$ \re{g(z)} = \int _\T \re{\frac{\alpha +z}{\alpha-z}} \sigma (d\alpha).$$ The measure $\sigma$ determines the Herglotz function $g$ up to an imaginary constant so that
$$ g(z) = i b + \int _\T \frac{\alpha +z}{\alpha-z} \sigma (d\alpha).$$ We will always impose the normalization condition that $b=0$ in this paper. Observe that this means that $\sigma$ is a probability
measure, \emph{i.e.} $\sigma $ is unital, $\sigma (\T ) = \bm{1}$, if and only if $g(0) =0$. We will also extend $g$ to a function
on $\C \sm \T$ using the convention that
$$ g(1/ \ov{z} ) ^* = - g (z). $$

Now let $G := g \circ b$ be the corresponding matrix-valued Herglotz function on $\C _+$ ($G$ has non-negative real part in $\C _+$). Setting $w := b^{-1} (z)$ and $t = b^{-1} (\alpha )$, we obtain that
$$ G (w) = -i\sigma(\{ 1 \} ) w + \intfty \frac{wt+1}{i(t-w)}(\sigma \circ b ) (dt).$$ The convention that $g (1 / \ov{z} ) ^* = - g(z)$ implies that $G (\ov{w} )^* = - G (w)$ and this extends $G$ to a function
on $\C \sm \R$. Again, we have that $\sigma $ is unital if and only if $g(0 ) = \bm{1}$ which happens if and only if $G (i ) = \bm{1}$. 

Now the Herglotz theorem on the upper half-plane states that
$$ \re{G (w)} = cy + \intfty P_w (t) \Sigma (dt) ,$$ for unique Borel measure $\Sigma$ obeying $$ \intfty \frac{\Sigma (dt)}{1+t^2} < \infty,$$ and positive constant matrix $c\geq 0$  where $y = \im{w} $ and
$$ P_w (t) = \re{ \frac{1}{i\pi} \frac{1}{t-w} }.$$ It will be convenient to determine the relationship between the Herglotz measure $\sigma$ of $g$ and $\Sigma$ of $G := g \circ b$. As above we let
$$ z(w) = \frac{w-i}{w+i} = b(w) \quad \mbox{and} \quad w(z) = i \frac{1+z}{1-z} = b^{-1} (z).$$
The function $g := G \circ b^{-1}$ obeys
$$ \re{g (z)} = \int _\T p _z (\alpha ) \sigma (d\alpha ) ,$$ where
$$ p _z (\alpha ) = \re{ \frac{\alpha +z }{\alpha -z}},$$ is the Poisson kernel on the disk.
We can write
$$ \re{g (z)} = p _z (1) \sigma ( \{ 1 \} ) + \int _{\T \sm \{ 1 \} } p_z (\alpha ) \sigma (d\alpha).$$

Now for $\alpha \in \T \sm \{ 1 \}$ we can let $\alpha = z (t) $ for $t \in \R$ to write
$$ \re{G (w)} = \re{g (z (w)) } = p_{z(w)} (1) \sigma ( \{ 1 \} ) + \intfty p_{z (w)} (z (t) ) \wt{\sigma} (dt), $$
where $\wt{\sigma}$ is the measure on $\R$ defined by $\wt{\sigma} (\Om) := \sigma (z (\Om ) ) = (\sigma \circ b) (\Om ) $, so that $z (\Om ) = b(\Om ) \in \T \sm \{ 1 \}$.

A bit of algebra shows that
$$ p _z (1 ) = \frac{1-|z| ^2}{|1-z | ^2 }, $$ and that if $w = x +iy \in \C _+$, then
$$ p _{z(w)} (1) = y.$$

Some more algebra shows that
$$ P_w (t) = \frac{1}{2\pi i} \frac{w - \ov{w}}{|t-w | ^2 }, $$ while
$$ p_{z(w)} (z(t)) = \pi (1+t^2 ) P_w (t).$$

We conclude that
$$ \re{ G (w) } = \re{g (z(w))} = y \sigma ( \{ 1 \} ) + \intfty P_w (t) \pi (1+t^2) \wt{\sigma} (dt). $$
Finally this shows how the measures $\wt{\sigma}$ and $\Sigma$ are related:
\be \Sigma (\Om ) = \int _\Om \pi (1+t^2) (\sigma \circ b) (dt). \label{measrel} \ee

Now let $\Theta$ be an arbitrary contractive $n\times n$ matrix-valued  analytic function on $\C _+$. Then  $$ G_\Theta := \frac{1+\Theta}{1-\Theta}, $$
is a Herglotz function on $\C _+$.

There is a bijective correspondence between $\C ^{n\times n}$-valued Herglotz functions $G$ on $\C \sm \R$ and $\C ^{n\times n}$-valued contractive analytic functions $\Theta $ on $\C _+$
defined by
$$ \Theta \mapsto G _\Theta := \frac{1 + \Theta}{1 - \Theta}  \quad \mbox{and} \quad G \mapsto \Theta _G := \frac{G-1}{G+1}. $$ 

The Nevanlinna-Herglotz representation theory can also be used to define a bijective correspondence between $\C^{n\times n}$-valued Herglotz functions on $\C ^+$ and a large class of $\C ^{n\times n}$-positive matrix-valued measures on $\R$. Namely if $g$ is a Herglotz function on the unit disk which obeys the normalization condition of the previous section (no non-zero constant imaginary part), then as discussed above it uniquely determined
by a regular, positive $\C ^{n\times n}$-valued Borel measure on the unit circle $\T$ by the formula:
\be g(z) = \int _\T \frac{\alpha +z}{\alpha-z} \sigma (d\alpha). \ee It follows that the Herglotz function $G := g \circ b$ on $\C _+$ is uniquely determined by the Herglotz measure $\Sigma$ and the value of $\sigma( \{ 1 \} )$
by the formula
\ba G (z) & = & -i\sigma(\{ 1 \} ) z + \intfty \frac{zt+1}{i(t-z)}(\sigma \circ b ) (dt)  \nonumber \\
& = & -i\sigma(\{ 1 \} ) z + \frac{1}{i\pi} \intfty \frac{zt+1}{(t-z)} \frac{1}{1+t^2} \Sigma (dt). \label{upglotz} \ea
Conversely given any non-negative matrix $P \in \C ^{n\times n}$ and positive $\C ^{n\times n}$ matrix-valued Borel measure on $\R$ that obeys the condition:
\be \left( \intfty \frac{1}{1+t ^2 } \Sigma (dt)  \vec{v} , \vec{w} \right) _{\C ^n} < \infty, \label{meascon} \ee for any $\vec{v}, \vec{w} \in \C ^n$, there is a unique Herglotz function $G$ on $\C _+$ that obeys equation
(\ref{upglotz}), or equivalently obeys:
$$ \re{G (z)} = P \im{z}  + \intfty \re{\frac{1}{i\pi} \frac{1}{t-z}} \Sigma (dt ). $$ It follows that there is a bijective correspondence between Herglotz functions $G$ on $\C _+$ and such pairs $(P, \Sigma )$, where
$P \in \C ^{n\times n}$ is positive and $\Sigma$ is a positive $\C ^{n\times n}$ valued measure obeying the condition (\ref{meascon}). This in turn implies there is a bijective correspondence between contractive analytic functions $\Theta$ on $\C _+$ and such pairs $(P , \Sigma )$. Given $\Theta$ we will call the corresponding $\Sigma$ the Herglotz measure of $\Theta$ and we will usually denote this by $\Sigma _\Theta$. Similarly $\sigma _\theta$ will denote the Herglotz measure of $\theta := \Theta \circ b^{-1}$. Note that since we assume any Herglotz function $g_\theta$ obeys our normalization condition (no non-zero imaginary constant part), it follows that
$\sigma _\theta$ is unital if and only if $g_\theta (0 ) = \bm{1} = G_\Theta (\bm{1} )$ which happens if and only if $\theta (0 ) = 0 = \Theta (i)$. 

\subsection{Herglotz spaces}

Let $\Theta$ be a $\C ^{n\times n} -$valued contractive analytic function on $\C _+$. The Herglotz space, $\mc{L} (\Theta)$ is the abstract reproducing kernel space of analytic $\C ^n$-valued functions on $\C \setminus \R$ with reproducing kernel $$ K _w ^\Theta (z) :=  \frac{i}{\pi} \frac{G_\Theta (z) + G_\Theta (w) ^* }{ z-\ov{w}}.$$

Namely given any $\vec{v} \in \C ^n$ and $f \in \mc{L} (\Theta )$ and $w \in \C \setminus \R$, we have that $K_w \vec{v} \in \mc{L} (\Theta)$ where
$K_w \vec{v} (z) := K_w (z) \vec{v} $ and,
$$ \left( f(z) , \vec{v} \right) _{\C ^n} = \ip{f}{K_z ^\Theta \vec{v} } _\Theta. $$

As shown in \cite{AMR}, if $\Theta$ is a Livsic characteristic function so that $\Theta (i) =0$, and the symmetric linear transformation $B$ with characteristic function $\Theta$ is densely defined then one can define a closed simple symmetric linear operator $\mf{Z} _\Theta \in \ms{S} _n (\mc{L} (\Theta ) )$ with domain $$\dom{\mf{Z} _\Theta} = \{ f \in \mc{L} (\Theta ) | \ zf \in \mc{L} (\Theta ) \}, $$ by
$$ (\mf{Z} _\Theta f ) (z) := z f(z); \quad \quad f \in \dom{\mf{Z} _\Theta}, $$ see \cite[Theorem 6.3]{AMR}.  Since we do not assume that all of our symmetric linear transformations are densely defined, we will need to extend this slightly:

\begin{lemma}
    Let $\Theta$ be a contractive analytic $\C ^{n\times n}$-valued function on $\C _+$. The linear transformation $\mf{Z} _\Theta$ defined on the domain $$ \dom{\mf{Z} _\Theta} := \{ F \in \L (\Theta ) | \ z F(z) \in \L (\Theta ) \},$$ by $$ ( \mf{Z} _\Theta F ) (z) = z F(z), \quad F \in \dom{\mf{Z} _\Theta }$$  belongs to $\sym{n}{\L (\Theta )}$. \label{Hergz}
\end{lemma}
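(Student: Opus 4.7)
The plan is to verify the four defining properties of $\sym{n}{\L (\Theta )}$ in turn: that $\mf{Z}_\Theta$ is closed, symmetric, simple, and has equal deficiency indices $(n,n)$, all of which ought to be readable from the reproducing kernel structure of $\L (\Theta )$ together with the analyticity of its elements on $\C \setminus \R$.

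Closedness is immediate from the reproducing kernel structure. For each $w \in \C \setminus \R$ and $\vec v \in \C^n$, the functional $F \mapsto (F(w), \vec v)_{\C^n} = \ip{F}{K_w^\Theta \vec v}$ is bounded on $\L (\Theta )$, so norm convergence forces pointwise convergence. Hence if $F_n \to F$ and $zF_n \to H$ in $\L (\Theta )$, then $F_n(w) \to F(w)$ and $wF_n(w) = (zF_n)(w) \to H(w)$ at every $w$, which forces $H(z) = zF(z)$ and $F \in \dom{\mf{Z}_\Theta}$ with $\mf{Z}_\Theta F = H$.

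Symmetry and the lower bound on the deficiency indices both flow from the reproducing kernel identity: for $F \in \dom{\mf{Z}_\Theta}$, $w \in \C \setminus \R$ and $\vec v \in \C^n$,
$$ \ip{\mf{Z}_\Theta F}{K_w^\Theta \vec v} = (wF(w), \vec v)_{\C^n} = w\, \ip{F}{K_w^\Theta \vec v}, $$
so $K_w^\Theta \vec v \perp \ran{\mf{Z}_\Theta - w}$. This places the $n$-dimensional slice $\{K_w^\Theta \vec v : \vec v \in \C^n\}$ inside each candidate deficiency subspace $\ran{\mf{Z}_\Theta - w}^\perp$, yielding $n_\pm \geq n$, and polarizing this identity against linear combinations of kernel functions (which are dense in $\L (\Theta )$) furnishes symmetry at least on one slot of the inner product. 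To upgrade this to full bilateral symmetry $\ip{\mf{Z}_\Theta F}{G} = \ip{F}{\mf{Z}_\Theta G}$ on $\dom{\mf{Z}_\Theta}$ and to secure the matching upper bound $n_\pm \leq n$, I would appeal to Theorem~6.3 of \cite{AMR} in the case $\Theta (i) = 0$ with densely defined underlying operator: it supplies an explicit isometric isomorphism between $\L (\Theta )$ and the ambient space of some $B \in \sym{n}{\H}$ with $\Theta_B = \Theta$, intertwining $B$ with $\mf{Z}_\Theta$ and transferring all four defining properties at once. For general contractive $\Theta$, one absorbs the value $\Theta (i)$ via a Frostman shift to reduce to the normalized case, and the possibly non-densely-defined regime is handled at the level of the Cayley transform of $\mf{Z}_\Theta$, which is an honest partial isometry on $\L (\Theta )$ regardless of whether $\dom{\mf{Z}_\Theta}$ is dense.

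Simplicity follows cleanly from analyticity: if $F \in \bigcap_{w \in \C \setminus \R} \ran{\mf{Z}_\Theta - w}$, then for each such $w$ there exists $G_w \in \dom{\mf{Z}_\Theta} \subset \L (\Theta )$ with $F(z) = (z-w) G_w(z)$, and evaluating at $z = w$ gives $F(w) = 0$; as every element of $\L (\Theta )$ is analytic on $\C \setminus \R$, this forces $F \equiv 0$. The main obstacle in the plan is exactly the passage from the one-sided estimate $n_\pm \geq n$ and the partial symmetry against kernel functions to the sharp equality $n_\pm = n$ and bilateral symmetry on the entire domain: the reproducing kernel identity controls only one slot of the inner product, and the closing of the gap is what genuinely requires either the AMR intertwining above or an equivalent concrete model built from the Herglotz measure $\Sigma_\Theta$ developed in Section~\ref{Herglotz}.
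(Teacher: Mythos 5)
There is a genuine gap, and it sits exactly where you locate it yourself: symmetry of $\mf{Z}_\Theta$ and the identification of the deficiency indices are not proved but outsourced, and the outsourcing does not cover the case the lemma exists for. The identity $\ip{\mf{Z}_\Theta F}{K^\Theta_w \vec{v}} = w \ip{F}{K^\Theta_w \vec{v}}$ is just the definition of point evaluation; since $K^\Theta_w\vec{v} \notin \dom{\mf{Z}_\Theta}$ in general, no polarization or density argument can convert it into $\ip{\mf{Z}_\Theta F}{G} = \ip{F}{\mf{Z}_\Theta G}$ (density is useless against an unbounded operator), and your claim $n_\pm \geq n$ additionally needs linear independence of the $K^\Theta_w\vec{v}$, i.e.\ invertibility of $\re{G_\Theta(w)}$, which you do not address. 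The appeal to \cite[Theorem 6.3]{AMR} only applies when $\Theta(i)=0$ and the associated symmetric operator is densely defined, whereas the paper states explicitly that this lemma is needed precisely to go beyond that case. Your two patches do not close the hole: (i) asserting that the Cayley transform of $\mf{Z}_\Theta$ "is an honest partial isometry regardless" is circular, since the Cayley transform being isometric on $\ran{\mf{Z}_\Theta + i}$ is equivalent to the symmetry you are trying to prove; (ii) the Frostman-shift reduction changes the Herglotz space ($\L(\Theta)$ and $\L(\Theta')$ have different kernels and measures), so transferring "multiplication by $z$" from one to the other requires an intertwining map that you do not construct — and building it is essentially Lemma \ref{FS} plus Livsic's theorem, which in the paper's logic come after (and rely on) the present lemma.

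The missing idea, which is the paper's actual proof, is to work directly with the Cayley transform as a multiplication operator: set $S_{\pm i} = \{F \in \L(\Theta) \mid F(\pm i)=0\}$ and use the vector-valued de Branges identity
$$(\ov{w}-w)\ip{\tfrac{F-F(w)}{z-w}}{\tfrac{G-G(w)}{z-w}}_\Theta = \ip{F}{\tfrac{G-G(w)}{z-w}}_\Theta - \ip{\tfrac{F-F(w)}{z-w}}{G}_\Theta$$
to show that multiplication by $b(z)=\tfrac{z-i}{z+i}$ is a closed isometry $V: S_{-i} \rightarrow S_i$. Then $\mf{Z}_\Theta := b^{-1}(V)$ is automatically closed and symmetric, the deficiency spaces are read off as $\ker{V}= \bigvee K^\Theta_{-i}\C^n$ and $\ran{V}^\perp = \bigvee K^\Theta_{i}\C^n$ (giving indices $(n,n)$), simplicity follows since $\bigvee_{z} K^\Theta_{\ov{z}}\C^n$ is dense, and a short computation ($G=(z+i)F \in S_{-i}$ and $(1-V)G = 2iF$) shows $\dom{b^{-1}(V)} = \{F \mid zF \in \L(\Theta)\}$, which also handles the non-densely-defined situation with no extra work. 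Your closedness and simplicity arguments are fine and somewhat more elementary than the paper's, but without the isometric-multiplication step (or a genuine substitute) the heart of the lemma is unproved.
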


The proof of this lemma follows from the vector-valued version of \cite[Theorem 5]{dB}, see also \cite{dB-Herglotz}.  In particular we use the identity
$$ (\ov{w} -w ) \ip{\frac{F -F(w)}{z-w}}{\frac{G -G(w)}{z-w}}_\Theta = \ip{F}{\frac{G - G(w)}{z-w} } _\Theta - \ip{ \frac{F - F(w) }{z-w}}{G} _\Theta, $$ valid
for all $F, G \in \L (\Theta )$ proven in \cite[Theorem 5]{dB} for the case $n=1$, and easily verified to also hold for the vector-valued case.

\begin{proof}
    Let $S_{\pm i} := \{ F \in \L (\Theta ) | \ F(\pm i ) = 0 \}$. By de Branges' results on Herglotz spaces, if $F \in S _{-i}$ then
$$ (V F ) (z) := \frac{z-i}{z+i} F(z) = b(z) F(z) \in \L (\Theta ), $$  so that the linear transformation $V$ which acts as multiplication by $b(z)$ obeys $V : S_{-i} \rightarrow S_i$.
We can show that $V$ is in fact an isometry: if $F \in S _{-i}$ then
 \ba \ip{VF}{VF} _\Theta & =& \ip{ F  - \frac{2i}{z+i} F }{F - \frac{2i}{z+i} F } _\Theta  \nonumber \\
& =& \ip{F}{F} _\Theta  -2i \left(  \ip{ \frac{1}{z+i} F}{F} _\Theta  - \ip{F}{  \frac{1}{z+i} F} _\Theta \right) + \ip{ \frac{2i}{z+i} F}{ \frac{2i}{z+i} F} _\Theta  \nonumber \\
&= & \ip{F}{F} _\Theta, \nonumber \ea
using the identity stated before the proof.

It is not hard to verify that $V$ is closed, and so $\mf{Z} _\Theta := b^{-1} (V)$ is a well-defined closed symmetric linear transformation. The symmetric linear transformation
$\mf{Z} _\Theta $ has indices $(n,n)$ since
$$ \ker{\mf{Z} _\Theta ^* + i } = \ker{V} = \bigvee K _{-i} ^\Theta \C ^n, $$ and
$$ \ker{\mf{Z} _\Theta ^* -i} = \ran{V} ^\perp = \bigvee K _i ^\Theta \C ^n.$$ Similarly,
$$ \ker{\mf{Z} _\Theta ^* -z } = \bigvee K_{\ov{z} } ^\Theta \C ^n, $$ so that
$$ \L (\Theta ) = \bigvee _{z \in \C \sm \R } \ker{\mf{Z} _\Theta ^* -z }, $$ proving that $\mf{Z} _\Theta$ is simple.
It remains to check that the domain of $\mf{Z} _\Theta$ is equal to $$\mf{D} _\Theta := \{ F \in \L (\Theta ) | \ z F(z) \in \L (\Theta ) \}.$$ Clearly
$\dom{\mf{Z} _\Theta } \subset \mf{D} _\Theta $, and conversely if $F \in \mf{D} _\Theta$ then $G(z) = (z+i ) F(z) \in S _{-i} = \ker{V} ^\perp$, and so
by definition $(1-V) G \in \dom{ \mf{Z} _\Theta}$, and
$$ (1-V) G(z) = (z+i)F(z) - (z-i) F(z) = 2i F(z).$$ This proves that $F \in \dom{\mf{Z} _\Theta }$ so that $\mf{D} _\Theta = \dom{\mf{Z} _\Theta }$.

\end{proof}

\begin{lemma}
\label{FS}
Let $\Theta$ be a contractive analytic function as above. The Livsic characteristic function of $\mf{Z} _\Theta$ is a Frostman shift of $\Theta$:
$$ \Theta _{\mf{Z} _\Theta} = (1 -\Theta (i) ^*) (1 - \Theta \Theta (i) ^* ) ^{-1} (\Theta - \Theta (i) ) (1 - \Theta (i) ) ^{-1}. $$
\end{lemma}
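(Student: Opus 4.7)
The plan is a direct computation of $\Theta_{\mathfrak{Z}_\Theta}$ via equations (\ref{B})--(\ref{Livcharfun}), exploiting the reproducing-kernel Hilbert space structure of $\mathcal{L}(\Theta)$ established in the proof of Lemma \ref{Hergz}. From that proof, $\mathrm{Ran}(\mathfrak{Z}_\Theta + i)^\perp = \bigvee K_{-i}^\Theta \mathbb{C}^n$, $\mathrm{Ran}(\mathfrak{Z}_\Theta - i)^\perp = \bigvee K_{i}^\Theta \mathbb{C}^n$, and $\mathrm{Ran}(\mathfrak{Z}_\Theta - \bar z)^\perp = \bigvee K_{\bar z}^\Theta \mathbb{C}^n$. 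Since $\Theta_B$ is independent of the choice of $\{w_j(z)\}$, I take $w_j(z) = K_{\bar z}^\Theta e_j$; for the orthonormal bases $\{u_k\}$ and $\{v_k\}$ I parameterize $u_k = \sum_l U_{lk} K_{-i}^\Theta e_l$ and $v_k = \sum_l V_{lk} K_i^\Theta e_l$, with orthonormality giving $U^* P U = V^* P V = I$ for the common Gram matrix $P := K_{\pm i}^\Theta(\pm i) = \tfrac{1}{\pi}\mathrm{Re}(G_\Theta(i))$. The equality of the two Gram matrices at $\pm i$ follows from the convention $G_\Theta(\bar w)^* = -G_\Theta(w)$.

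Next, using the reproducing-kernel identity $\langle K_{w'} \vec b, K_w \vec v\rangle = \vec v^* K_{w'}(w)\vec b$, I assemble the matrices $B(z), A(z)$ from equations (\ref{B})--(\ref{A}) and substitute the explicit Herglotz forms
\[
K_{\bar z}^\Theta(-i) = \frac{i}{\pi}\frac{G_\Theta(z) + G_0^*}{z+i}, \qquad K_{\bar z}^\Theta(i) = \frac{i}{\pi}\frac{G_\Theta(z) - G_0}{z-i},
\]
where $G_0 := G_\Theta(i)$ and $A := \Theta(i)$. The factor $b(z) = (z-i)/(z+i)$ in the definition of $\Theta_B$ cancels against the ratio of the denominators of these two kernel values, leaving $\Theta_{\mathfrak{Z}_\Theta}(z)$ expressed, up to constant basis-dependent multipliers, as a product built from $(G_\Theta(z) - G_0)$ and $(G_\Theta(z) + G_0^*)^{-1}$.

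The algebraic step then uses the identities
\[
G_\Theta - G_0 = 2(1-\Theta)^{-1}(\Theta-A)(1-A)^{-1}, \qquad G_\Theta + G_0^* = 2(1-A^*)^{-1}(1-A^*\Theta)(1-\Theta)^{-1},
\]
obtained by direct expansion from $G_\Theta = (1+\Theta)(1-\Theta)^{-1}$ and $G_0 = (1+A)(1-A)^{-1}$, together with the cyclic matrix identity $(1-YX)^{-1}Y = Y(1-XY)^{-1}$ applied with $Y = A^*$ and $X = \Theta$. Combining these reduces the relevant product to the claimed Frostman shift $(1-A^*)(1-\Theta A^*)^{-1}(\Theta - A)(1-A)^{-1}$.

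The main obstacle will be the careful bookkeeping of matrix non-commutativity in the final algebraic step. Extraneous factors of $(1-\Theta)^{\pm 1}$ appear in intermediate expressions from the $G_\Theta \pm G_0$ identities, and these must be cancelled by precisely the right application of the cyclic identity; additionally, the non-unitary factors arising from the orthonormalization (essentially $P^{\pm 1/2}$), together with the unitary freedom in the choice of orthonormal basis, must combine to yield the stated equality as an equality of equivalence classes of Livsic characteristic functions, i.e., up to unitary multiplication from the left and the right.
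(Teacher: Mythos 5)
Your proposal is correct and follows essentially the same route as the paper: choose the kernel vectors $K_{\bar z}^\Theta e_j$ and (normalized) $K_{\pm i}^\Theta$-bases, observe $K_i(i)=K_{-i}(-i)$ via $G_\Theta(\bar w)^*=-G_\Theta(w)$, cancel $b(z)$ against the kernel denominators to get $\Theta_{\mf{Z}_\Theta}\sim (G_\Theta+G_\Theta(i)^*)^{-1}(G_\Theta-G_\Theta(i))$, and finish by matrix algebra. The only cosmetic difference is your factorization $G_\Theta+G_0^*=2(1-\Theta(i)^*)^{-1}(1-\Theta(i)^*\Theta)(1-\Theta)^{-1}$, which needs the extra commutation step you flagged, whereas the paper uses the equivalent form $2(1-\Theta)^{-1}(1-\Theta\,\Theta(i)^*)(1-\Theta(i)^*)^{-1}$ so that the $(1-\Theta)^{-1}$ factors cancel immediately.
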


\begin{proof}
   This is a straightforward calculation using the definition of the characteristic function (equations (\ref{B}), (\ref{A}) and (\ref{Livcharfun})) and the reproducing kernel $$ K_w (z) = \frac{i}{\pi} \frac{ G_\Theta (z) + G_\Theta (w) ^* }{z-\ov{w}},$$ for $\mc{L} (\Theta )$.
Let $\{ e_j \}$ be the standard orthonormal basis of $\C ^n$. We can choose
$$u_j = K_{-i} K_{-i} (-i ) ^{-1/2} e_j, \quad v_j = K_i K_i (i) ^{-1/2} e_j \quad \mbox{and} \quad w_j (z) := K_{\ov{z}} e_j.$$
With this choice of bases, one obtains
$$ A(z) = [ \ip{ K_{\ov{z}} e_j}{ K_i K_i (i) ^{-1/2} e_k } ] = K_i (i) ^{-1/2} K_{\ov{z}} (i) \quad \mbox{and} \quad B(z) = K_{-i} (-i) ^{-1/2} K_{\ov{z} } (-i ).$$  Recall here that $$\Theta _{\mf{Z} _\Theta } (z) = b(z) B(z) ^{-1} A(z).$$
Now observe that $$ K_i (i) = \frac{i}{\pi} \frac{ G _\Theta (i) + G_\Theta (i) ^* }{2i}. $$ Using that $G_\Theta (\ov{z} ) ^* = -G_\Theta (z)$ for the Herglotz function $G_\Theta$,
we also obtain that $$ K_{-i} (-i) = \frac{i}{\pi} \frac{ G_\Theta (-i) + G_\Theta (-i) ^* }{-2i} = \frac{i}{\pi} \frac{ -G _\Theta (i) ^* - G_\Theta (i)  }{-2i} = K_i (i).$$
It follows that
$$ \Theta (z) := \Theta _{\mf{Z} _\Theta } (z) = b(z) B(z) ^{-1} A(z) = b(z) K_{\ov{z}} (-i) ^{-1} K_{\ov{z}} (i).$$

Substituting in our expression for the reproducing kernel $K_w (z)$ yields
\ba \Theta (z) & = & b(z) \left( \frac{i}{\pi} \frac{G (-i) + G (\ov{z} ) ^*}{ -i -z} \right) ^{-1} \left( \frac{i}{\pi} \frac{G (i) + G (\ov{z} ) ^* }{i-z} \right) \nonumber \\
& =& \left( G (-i) + G (\ov{z} ) ^* \right) ^{-1} \left( G (i) + G (\ov{z} ) ^* \right) \nonumber \\
& =& \left( - G (i)^* - G (z)  \right) ^{-1} \left( G (i) - G(z) \right) \nonumber \\
& =& \left( G (i)^* + G (z)  \right) ^{-1} \left( G (i) - G(z) \right). \quad \quad \mbox{(ignore the factor of $-1$)} \nonumber \ea
We can ignore the factor of $-1$ since $\Theta (z)$ is defined only up to conjugation by fixed unitaries.

Now straightforward algebra shows that
$$ G(z) - G(i) = 2 (1 -\Theta (z) ) ^{-1} (\Theta (z) - \Theta (i) (1 -\Theta (i) ) ^{-1}, $$ while
$$ G(i) ^* + G(z) = 2 (1 - \Theta (z) ) ^{-1} (1 -\Theta (z) \Theta (i ) ^* ) (1 - \Theta (i) ^* ) ^{-1}.$$ Putting these two formulas together yields the Frostman shift formula.
\end{proof}

In particular if $\Theta (i) =0$ then $\Theta $ is equal to the Livsic characteristic function of $\mf{Z} _\Theta$, and Theorem \ref{Liv} allows us to conclude:

\begin{cor}
    If $B \in \ms{S}$ has characteristic function $\Theta$ then $B \cong \mf{Z} _\Theta$. \label{lessthan}
\end{cor}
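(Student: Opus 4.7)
The plan is to show that $\mf{Z}_\Theta$ has the same Livsic characteristic function as $B$ and then invoke Theorem \ref{Liv} to conclude unitary equivalence. Since the characteristic function of any $B \in \ms{S}$ vanishes at $z = i$ (as noted right after equation (\ref{Livcharfun})), the hypothesis $\Theta_B = \Theta$ forces $\Theta(i) = 0$.

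First, I would substitute $\Theta(i) = 0$ into the Frostman shift formula from Lemma \ref{FS}:
$$ \Theta_{\mf{Z}_\Theta} = (1-\Theta(i)^*)(1-\Theta\Theta(i)^*)^{-1}(\Theta-\Theta(i))(1-\Theta(i))^{-1}. $$
All four factors involving $\Theta(i)$ collapse to the identity, leaving $\Theta_{\mf{Z}_\Theta} = \Theta$. Thus $\mf{Z}_\Theta \in \symn{n}$ has the same characteristic function as $B$.

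Next, applying Theorem \ref{Liv} (Livsic's theorem on the characteristic function as a complete unitary invariant), the equality $\Theta_{\mf{Z}_\Theta} = \Theta = \Theta_B$ yields $B \simeq \mf{Z}_\Theta$.

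There is no real obstacle here: the corollary is essentially a direct specialization of Lemma \ref{FS} to the normalized case $\Theta(i) = 0$, combined with Livsic's classification theorem. The only thing worth remarking on is that this argument tacitly uses that $\mf{Z}_\Theta$ belongs to $\sym{n}{\L(\Theta)}$, which is exactly the content of Lemma \ref{Hergz}, so that Theorem \ref{Liv} is applicable to it.
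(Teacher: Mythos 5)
Your proposal is correct and follows the paper's own route exactly: Lemma \ref{FS} with $\Theta(i)=0$ gives $\Theta_{\mf{Z}_\Theta}=\Theta$, and Theorem \ref{Liv} then yields $B \simeq \mf{Z}_\Theta$. The remark that Lemma \ref{Hergz} guarantees $\mf{Z}_\Theta \in \sym{n}{\L(\Theta)}$ is a correct and appropriate observation, implicit in the paper's argument.
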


The following example of symmetric extensions of a symmetric operator $B$ with $\Theta _B$ inner will be important:

\begin{eg}
\label{motive}

Let $\Theta , \Phi $ be $\C ^{n\times n}$-valued  inner functions on $\C _+$ such that $\Theta \leq \Phi$. In this case  $\Theta ^{-1} \Phi$ is also an inner function.

Given any inner function $\Theta$ one can define a symmetric linear transformation $Z_\Theta$ acting in $K^2 _\Theta$ by:
$$ \dom{Z_\Theta } := \{ f \in K^2 _\Theta | \ z f(z) \in K^2 _\Theta \}, $$ and
$$ Z_\Theta f (z) := z f(z), \quad f \in \dom{Z _\Theta }, $$
see for example \cite{Martin-dB,AMR}.  It is straightforward to show that the characteristic function of $Z_\Theta$ is the Frostman shift of
$\Theta$ as above so that by Livsic's theorem $Z_\Theta \simeq \mf{Z} _\Theta$.

It follows that since $K^2 _\Theta \subset K^2 _\Phi$ that $\dom{Z _\Theta } \subset \dom{Z _\Phi }$ and that $Z_\Theta \subset Z _\Phi$ so that $Z_\Theta \lessim Z_\Phi$.
Moreover given any $A \in \Ext{Z _\Phi }$, then the restriction $A'$ of $A$ to its smallest invariant subspace containing $K^2 _\Theta$ belongs to $\Ext{Z_\Theta}$.

This can be generalized further: Suppose that $\Phi$ is an arbitrary contractive analytic function such that $\Phi \geq \Theta$ where $\Theta$ is inner. Then by \cite[II-6]{Sarason-dB}, $K^2 _\Theta$ is contained isometrically
in the deBranges-Rovnyak space $K^2 _\Phi$, $K^2 _\Theta \subset K^2 _\Phi$. Moreover \cite[Theorem 7.1]{AMR} shows that multiplication by $V (z) := \frac{2}{1-\Phi (z)}$ is an isometry from $K^2 _\Phi$ into $\mc{L} (\Phi )$. Hence $V : K^2 _\Theta \rightarrow \mc{L} (\Phi )$, the operator of multiplication by $V(z)$ is an isometry of $K^2 _\Theta $ into $\mc{L} (\Phi )$, and by the definition of $\dom{Z_\Theta}$, and the definition of $\dom{\mf{Z} _\Phi } $ in Lemma \ref{Hergz}, it follows that $V \dom{Z _\Theta } \subset \dom{\mf{Z} _\Phi } $ and that $V Z_\Theta V ^* \subset \mf{Z} _\Phi $ so that
$Z_\Theta \lessim \mf{Z} _\Phi$. Since $\mf{Z} _\Theta \cong Z_\Theta$, this also shows that $\mf{Z} _\Theta \lessim \mf{Z} _\Phi$ whenever $\Theta$ is inner, $\Phi$ is contractive and $\Theta \leq \Phi$.
Again the restriction of any $A \in \Ext{\mf{Z} _\Phi}$ to its smallest invariant subspace containing $V K^2 _\Theta$ belongs to $\Extu{Z_\Theta}$. Here recall that given $B \in \symm$, $\Extu{B}$ is the set of all
self-adjoint linear transformations $A$ such that $A \in \Ext{UBU^*}$ for some isometry $U:\H \rightarrow \K$.

We can also construct examples of symmetric $B_1 \in \sym{n}{\H _1 }$ and $B_2 \in \sym{m}{\H _2}$ such that $B_1 \lessim B_2$ where $n \neq m$:
Suppose that $\Phi := \Theta \Gamma$ where $\Phi, \Theta, \Gamma$ are all scalar-valued inner functions on $\C _+$.  Let
$$ \Lambda := \left( \begin{array}{cc} \Theta & 0 \\ 0 & \Gamma \end{array} \right). $$ Then $\Lambda$ is a $2\times 2$ matrix-valued inner function,
and note that $Z _\Lambda$ has indices $(2,2)$, and that there is a natural unitary map $W$ from $K^2 _\Lambda = K^2 _\Theta \oplus K^2 _\Gamma$ onto
$K^2 _\Phi = K^2 _\Theta \oplus \Theta K^2 _\Gamma$. Namely $$ W (f \oplus g ) := f + \Theta g, $$ so that if we view elements of $K^2 _\Lambda $ as column vectors then $W$ acts as multiplication by the $1 \times 2$
matrix function $$ W (z) = (1 , \Theta (z) ).$$ It follows that $Z_\Lambda \lessim Z_\Phi$, where $Z_\Lambda$ has indices $(2,2)$ and
$Z _\Phi$ has indices $(1,1)$.
\end{eg}

\begin{thm}
If $B_1, B_2 \in \ms{S}$ with characteristic functions $\Theta _1, \Theta _2$, the characteristic function $\Theta _1$ is inner and $\Theta _1 \leq \Theta _2$ then $B_1 \lessim B_2$. \label{conless}
\end{thm}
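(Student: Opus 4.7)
The plan is to reduce the statement to the concrete construction already carried out in Example \ref{motive}, using Corollary \ref{lessthan} to transport $B_1$ and $B_2$ to their model forms. Specifically, Corollary \ref{lessthan} gives unitary equivalences $B_j \simeq \mf{Z}_{\Theta_j}$ for $j=1,2$. Since the partial order $\lessim$ is manifestly invariant under unitary equivalence of either side (if $B_1 \simeq C_1$, $B_2 \simeq C_2$, and $B_1 \simeq B_1' \subset B_2$, then conjugating $B_1'$ by the unitary implementing $B_2 \simeq C_2$ produces a restriction of $C_2$ that is still unitarily equivalent to $C_1$), it is enough to establish $\mf{Z}_{\Theta_1} \lessim \mf{Z}_{\Theta_2}$.

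For this I would directly invoke the construction in Example \ref{motive}. Because $\Theta_1$ is inner and $\Theta_1 \leq \Theta_2$, the model subspace $K^2_{\Theta_1}$ sits isometrically inside the de Branges--Rovnyak space $K^2_{\Theta_2}$ (by \cite[II-6]{Sarason-dB}), and the operator of multiplication by $V(z):= 2/(1-\Theta_2(z))$ embeds $K^2_{\Theta_2}$ isometrically into the Herglotz space $\mc{L}(\Theta_2)$ (by \cite[Theorem 7.1]{AMR}). The composition provides an isometry $V : K^2_{\Theta_1} \hookrightarrow \mc{L}(\Theta_2)$ intertwining the model symmetric operators: the defining domains in Lemma \ref{Hergz} give $V\,\dom{Z_{\Theta_1}} \subset \dom{\mf{Z}_{\Theta_2}}$, and on this domain both operators act as multiplication by $z$, so $V Z_{\Theta_1} V^* \subset \mf{Z}_{\Theta_2}$.

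This already shows $Z_{\Theta_1} \lessim \mf{Z}_{\Theta_2}$. Applying Lemma \ref{FS} together with Theorem \ref{Liv}, one sees $Z_{\Theta_1} \simeq \mf{Z}_{\Theta_1}$ (the characteristic function of either is the Frostman shift of $\Theta_1$ vanishing at $i$, hence $\Theta_1$ itself since $\Theta_1$ is a Livsic characteristic function), and therefore $\mf{Z}_{\Theta_1} \lessim \mf{Z}_{\Theta_2}$. Transporting back through the unitary equivalences $B_j \simeq \mf{Z}_{\Theta_j}$ yields $B_1 \lessim B_2$.

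There is no real obstacle: all the analytic work, namely constructing the isometric embedding $V$ and verifying the intertwining with the multiplication-by-$z$ operators, has already been packaged into Example \ref{motive}, and the theorem is essentially a clean restatement of that example in invariant (characteristic-function) language via Corollary \ref{lessthan}.
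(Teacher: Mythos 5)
Your proposal is correct and follows essentially the same route as the paper: the paper's proof is exactly "apply Corollary \ref{lessthan} to get $B_j \simeq \mf{Z}_{\Theta_j}$, then invoke the construction of Example \ref{motive} (isometric inclusion $K^2_{\Theta_1} \subset K^2_{\Theta_2}$ plus the multiplier $2/(1-\Theta_2)$ into $\L(\Theta_2)$) to conclude $\mf{Z}_{\Theta_1} \lessim \mf{Z}_{\Theta_2}$." You have merely unpacked the example's details and made the unitary invariance of $\lessim$ explicit, which is consistent with the paper's argument.
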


\begin{proof}
By Corollary \ref{lessthan}, $B_j \simeq \mf{Z} _{\Theta _j}$. As discussed in the above example if $\Theta _1$ is inner and $\Theta _1 \leq \Theta _2$ then $\mf{Z} _{\Theta _1} \lessim \mf{Z} _{\Theta _2}$
so that $B_1 \lessim B_2$.
\end{proof}

Given any $B \in \sym{1}{\H}$, it is well known that there is a conjugation $C_B$ which commutes with $B$, \emph{i.e.} $C _B : \dom{B} \rightarrow \dom{B}$ and $C_B B = B C_B$. Recall here that a conjugation is an anti-linear, idempotent onto isometry \cite[Theorem 7.1]{Krein}. It will be useful for us to extend this construction to the case of arbitrary $B \in \symm$. We say that $C$ is a conjugation intertwining $B_1 \in \sym{n}{\H _1}$ and
$B_2 \in \sym{n}{\H _2 }$ provided that $C B_1 = B_2 C $, and $C$ is an anti-linear and onto isometry.

\begin{prop}
    Let $\Theta$ be a contractive $\C ^{n \times n}$-valued analytic function in $\C _+$, $n \in \N $. The map $C _\Theta : \L (\Theta ) \rightarrow \L (\Theta ^T )$, defined
by $C _\Theta  F (z) = F ^\dag (z) :=  \ov{F(\ov{z} )}$ is a conjugation intertwining $\mf{Z} _\Theta$ and $\mf{Z} _{\Theta ^T}$, and $C _\Theta ^* = C _{\Theta ^T}$.
\end{prop}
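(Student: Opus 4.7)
The plan is to work entirely on reproducing kernels, where everything can be computed explicitly from the Herglotz-style kernel formula $K_w^\Theta(z) = \frac{i}{\pi}\frac{G_\Theta(z)+G_\Theta(w)^*}{z-\bar w}$ and its analogue for $\Theta^T$. The key relations I will use are the following two observations, both of which follow from $G_\Theta(\bar z)^*=-G_\Theta(z)$ and the definition $G_\Theta=(1+\Theta)(1-\Theta)^{-1}$: first, $G_{\Theta^T}=G_\Theta^T$, since the functional calculus respects transposes; second, taking entry-wise complex conjugates sends $G_\Theta(\bar z)$ to $-G_{\Theta^T}(z)$ and $G_\Theta(w)^*$ to $G_{\Theta^T}(w)$.

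First I would verify that $C_\Theta$ maps $\mathcal{L}(\Theta)$ into $\mathcal{L}(\Theta^T)$ by computing its action on each kernel vector: directly from the definition $C_\Theta F(z)=\overline{F(\bar z)}$ and the two relations above,
\[ C_\Theta\bigl(K_w^\Theta\vec v\bigr)(z) \;=\; \overline{\tfrac{i}{\pi}\tfrac{(G_\Theta(\bar z)+G_\Theta(w)^*)\vec v}{\bar z-\bar w}} \;=\; \tfrac{i}{\pi}\tfrac{(G_{\Theta^T}(z)-G_{\Theta^T}(w))\bar{\vec v}}{z-w} \;=\; K_{\bar w}^{\Theta^T}\bar{\vec v}(z), \]
the last equality using $G_{\Theta^T}(\bar w)^*=-G_{\Theta^T}(w)$. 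Anti-linearity on the span of kernel vectors is automatic from the entry-wise conjugation, and this identity simultaneously shows that the image hits every generating element $K_u^{\Theta^T}\vec u$ (take $w=\bar u$, $\vec v=\bar{\vec u}$), so $C_\Theta$ will be onto once we know it is isometric.

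The isometry check reduces, by bilinearity, to comparing $\langle K_{\bar w}^{\Theta^T}\bar{\vec v},K_{\bar{w'}}^{\Theta^T}\bar{\vec{v'}}\rangle_{\Theta^T}$ with $\overline{\langle K_w^\Theta\vec v,K_{w'}^\Theta\vec{v'}\rangle_\Theta}$. Both evaluate to scalar quantities built from $G_\Theta$ at $w,w'$, and the identities $(G_\Theta(w)^T\bar{\vec v},\bar{\vec{v'}})=\overline{(G_\Theta(w)^*\vec v,\vec{v'})}$ and $(\overline{G_\Theta(w')}\bar{\vec v},\bar{\vec{v'}})=\overline{(G_\Theta(w')\vec v,\vec{v'})}$, applied inside the kernel, collapse the two sides to the same expression. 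By density of the kernel span, $C_\Theta$ extends to an anti-linear isometry from $\mathcal{L}(\Theta)$ onto $\mathcal{L}(\Theta^T)$.

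For the intertwining, if $F\in\dom{\mf{Z}_\Theta}$ then $zF(z)\in\mathcal{L}(\Theta)$, and
\[ (C_\Theta \mf{Z}_\Theta F)(z) \;=\; \overline{\bar z F(\bar z)} \;=\; z\,\overline{F(\bar z)} \;=\; z(C_\Theta F)(z), \]
which both shows $C_\Theta F\in\dom{\mf{Z}_{\Theta^T}}$ (since the right hand side lies in $\mathcal{L}(\Theta^T)$) and gives $C_\Theta\mf{Z}_\Theta F=\mf{Z}_{\Theta^T}C_\Theta F$; the reverse inclusion of domains comes from applying the same argument to $C_{\Theta^T}$. For the adjoint relation, a direct computation shows $C_{\Theta^T}\circ C_\Theta=\mr{id}_{\mathcal{L}(\Theta)}$ and $C_\Theta\circ C_{\Theta^T}=\mr{id}_{\mathcal{L}(\Theta^T)}$, so $C_{\Theta^T}$ is a two-sided inverse of $C_\Theta$. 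For any anti-linear isometric bijection $C$, the adjoint in the anti-linear sense $\ip{Cx}{y}=\ov{\ip{x}{C^* y}}$ coincides with the inverse (set $y=Cx'$ and use the anti-linear isometry identity $\ip{Cx}{Cx'}=\ov{\ip{x}{x'}}$), so $C_\Theta^*=C_{\Theta^T}$. The only step requiring genuine care is the kernel identity in the first paragraph, since one must correctly track transpose versus conjugate-transpose when relating $G_\Theta$ and $G_{\Theta^T}$; everything else follows mechanically from it.
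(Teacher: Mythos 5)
Your proposal is correct and follows essentially the same route as the paper's own proof: compute the action of $C_\Theta$ on the kernel vectors $K_w^\Theta\vec v$ using $G_{\Theta^T}=G_\Theta^T$ and $G(\ov z)^*=-G(z)$, obtain $C_\Theta K_w^\Theta\vec v=K_{\ov w}^{\Theta^T}\,\ov{\vec v}$, check the anti-isometry on the dense span of kernel vectors, and then handle the domain, intertwining, and $C_\Theta^*=C_{\Theta^T}$ exactly as the paper does. No substantive differences to report.
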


In the above $^T$ denotes matrix transpose and for a vector $F (z)$, $\ov{F(z)}$ denotes the vector obtained by taking the complex conjugate of each component in the fixed canonical basis of $\C ^n$.

\begin{proof}

Let $\{ e_k \}$ denote the canonical orthonormal basis of $\C ^n$. Let $C : \C ^n \rightarrow \C ^n$ denote the conjugation defined by entrywise complex conjugation: if $\vec{v} = \sum c_i e_i $ for $c _i \in \C$,
then $C \vec{v} := \sum \ov{c_i} e_i$. Given any matrix $A \in \C ^{n\times n}$, with entries $A = \left[ a_{ij} \right]$, it is easy to check that $C A C = \left[ \ov{a_{ij} } \right] = (A ^*   ) ^T = (A^T ) ^* $.
By definition, given $F \in \L (\Theta )$, we have that $$(C _\Theta F ) (z) = C (F (\ov{z} )).$$

The closed linear span of the evaluation vectors $K_w ^\Theta \vec{v}$ for $w \in \C \sm \R$, $\vec{v} \in \C ^n$ is dense in $\L (\Theta )$. The action of $C _\Theta$ on such functions is
 \ba (C _\Theta K_w ^\Theta )(z) \vec{v} & = & C K_w ^\Theta (\ov{z} ) \vec{v} = C K_w ^\Theta (\ov{z} ) C  C \vec{v} \nonumber \\
& =& (K_w ^\Theta (\ov{z} ) ^T) ^* C \vec{v}. \nonumber \ea

Now \ba K_w ^\Theta (\ov{z} ) ^T & = & \left( \frac{i}{\pi} \frac{G _\Theta (\ov{z} ) + G_\Theta (w) ^* }{\ov{z} -\ov{w}} \right) ^T \nonumber \\
& = & \frac{i}{\pi} \frac{G _{\Theta ^T} (\ov{z} ) + G_{\Theta ^T} (w) ^* }{\ov{z} -\ov{w}}, \nonumber \ea since $G_\Theta = \frac{1+\Theta}{1-\Theta}$ so that
$G_\Theta ^T = G _{\Theta ^T}$. It follows that
\ba C K_w ^\Theta (\ov{z}) C & =& (K_w ^\Theta (\ov{z} ) ^T) ^* \nonumber \\
 & = & \frac{-i}{\pi} \frac{G _{\Theta ^T} (\ov{z} ) ^*  + G_{\Theta ^T} (w) }{z -w} \nonumber \\
& = & \frac{i}{\pi} \frac{G _{\Theta ^T} (z )   + G_{\Theta ^T} (\ov{w}) ^*  }{z -w} = K_{\ov{w}} ^{\Theta ^T } (z). \nonumber \ea
This proves that
$$ C _\Theta K_w ^\Theta  \vec{v} = K_{\ov{w}} ^{\Theta ^T} C \vec{v} \in \L (\Theta ^T ), $$ and it follows from the density of the point evaluation vectors that
$C _\Theta : \L (\Theta ) \rightarrow \L (\Theta ^T )$, and that it has dense range. It is clear by definition that $C _\Theta$ is anti-linear. To see that it is an (anti-linear) isometry
note that
\ba  \ip{C _\Theta K_w ^\Theta \vec{v}}{C _\Theta K_z ^\Theta \vec{w} } _\Theta  & = &  \ip{ K_{\ov{w}} ^{\Theta ^T} C \vec{v} }{ K _{\ov{z} } ^{\Theta ^T} C \vec{w} } _{\Theta ^T} \nonumber \\
& =& \left( K_{\ov{w} } ^{\Theta ^T} (\ov{z} ) C \vec{v} , C \vec{w} \right) _{\C ^n } \nonumber \\
& =& \left( C K_w ^\Theta (z) C C \vec{v} , C \vec{w} \right)  \nonumber \\
& = &  \left( \vec{w} , K_w ^\Theta (z) \vec{v} \right)   = \ip{ K_z ^\Theta \vec{w} }{K_w ^\Theta \vec{v} }_\Theta. \nonumber \ea
Using the fact that linear combinations of such functions are dense in $\L (\Theta )$ and $\L (\Theta ^T )$, we conclude that $C _\Theta$ is an isometry with dense range, and hence is onto. In other words,
$C _\Theta$ is anti-unitary, so that $C _\Theta ^* C_\Theta = \bm{1}$. As is easy to check:
$$ C _{\Theta ^T} C _{\Theta} K_w ^\Theta \vec{v} = C_{\Theta ^T} K_{\ov{w} } ^{\Theta ^T } C \vec{v} = K_w ^\Theta C ^2 \vec{v} = K_w ^\Theta \vec{v},$$ and it follows that $C _\Theta ^* = C _{\Theta ^T}$.

Finally, since $\dom{\Z _\Theta } := \{ F \in \L (\Theta ) | \ zF \in \L (\Theta ) \}, $ and similarly for $\Z _{\Theta ^T}$, $C _{\Theta } \dom{\Z _\Theta } = \dom{\Z _{\Theta ^T}}$.
Indeed, if $F \in \dom{\Z _\Theta}$, then
$$ C _\Theta z F(z) = C (\ov{z} F (\ov{z} )) =  z (C _\Theta F  ) (z), $$ so that $C_\Theta F \in \dom{\Z _{\Theta ^T}}$, and conversely given any $G \in \dom{Z _{\Theta ^T}}$, $ C _{\Theta ^T } G \in \dom{\Z _\Theta}$,
and $C _\Theta C _{\Theta ^T} G = G$, showing that $C _\Theta \dom{\Z _\Theta } = \dom{\Z _{\Theta ^T}}$. The above arguments also show that for any $F \in \dom{\Z _\Theta}$,
$$ C _\Theta \Z _\Theta F   = \Z _{\Theta ^T} C _\Theta F,$$  completing the proof.
\end{proof}

\begin{cor}
\label{conjugation}
Suppose that $B \in \sym{n}{\H }$ has characteristic function $\Theta _B$. Let $B_T \in \sym{n}{\H _T}$ have characteristic function $\Theta _B ^T$. Then there are conjugations
$C _B : \H \rightarrow \H _T$, and $C _{B^T} = C_B ^*$ such that $C _B B = B_T C_B$ and $C _{B^T} B_T = B C _{B^T}$.
\end{cor}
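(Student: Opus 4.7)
The plan is to reduce the corollary to the preceding proposition by transporting the explicit anti-unitary conjugation $C_\Theta\colon \L(\Theta)\to\L(\Theta^T)$ constructed there back to the abstract Hilbert spaces $\H$ and $\H_T$ via Livsic's unitary equivalence (Theorem \ref{Liv}) or, more directly, via Corollary \ref{lessthan}.

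First I would invoke Corollary \ref{lessthan} twice. Setting $\Theta := \Theta_B$, this yields unitaries $U\colon \H\to\L(\Theta)$ and $U_T\colon \H_T\to\L(\Theta^T)$ implementing the unitary equivalences $U B U^* = \mf{Z}_\Theta$ and $U_T B_T U_T^* = \mf{Z}_{\Theta^T}$. Next I would define
\[
C_B := U_T^{\,*}\, C_\Theta\, U \colon \H\longrightarrow \H_T, \qquad C_{B^T} := U^{*}\, C_{\Theta^T}\, U_T \colon \H_T\longrightarrow \H.
\]
Since $C_\Theta$ is anti-linear and isometric onto $\L(\Theta^T)$ by the preceding proposition, and $U$, $U_T$ are linear and unitary, the composition $C_B$ is an anti-linear onto isometry, i.e.\ a conjugation (and similarly for $C_{B^T}$).

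The intertwining identities are then routine. For $f\in\dom{B}$, using that $U\dom{B}=\dom{\mf{Z}_\Theta}$ (by the equivalence) and the relation $C_\Theta\,\mf{Z}_\Theta = \mf{Z}_{\Theta^T}\,C_\Theta$ just proved,
\[
C_B B f = U_T^{\,*} C_\Theta U B f = U_T^{\,*} C_\Theta \mf{Z}_\Theta U f = U_T^{\,*}\mf{Z}_{\Theta^T} C_\Theta U f = B_T U_T^{\,*} C_\Theta U f = B_T C_B f,
\]
and the analogous computation with the roles of $\Theta$ and $\Theta^T$ swapped gives $C_{B^T} B_T = B\, C_{B^T}$. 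Finally, $C_B^{\,*} = (U_T^{\,*} C_\Theta U)^{*} = U^{*} C_\Theta^{\,*} U_T = U^{*} C_{\Theta^T} U_T = C_{B^T}$, using the identity $C_\Theta^{\,*}=C_{\Theta^T}$ established in the previous proposition.

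There is no real obstacle here beyond bookkeeping: once the anti-unitary $C_\Theta$ has been built in the concrete Herglotz models, the corollary is simply the statement that this intrinsic structure passes through any unitary equivalence. The only point to be slightly careful about is the anti-linearity, which is why $C_B^{\,*}$ acts as $C_{B^T}$ rather than $C_B^{-1}$ in the usual unitary sense — but for a conjugation these coincide, so no difficulty arises.
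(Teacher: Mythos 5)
Your proposal is correct and follows essentially the same route as the paper, whose proof is just the one-line observation that $B \simeq \mf{Z}_{\Theta}$, $B_T \simeq \mf{Z}_{\Theta^T}$ (Corollary \ref{lessthan}) and that composing the unitaries implementing these equivalences with $C_\Theta$ yields $C_B$; you have simply written out the bookkeeping (anti-unitarity, the intertwining computation, and $C_B^* = C_{B^T}$) that the paper leaves implicit.
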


Note that any such conjugation $C _B$ obeys $C _B \ran{B-z} = \ran{B_T -\ov{z} }$, $C _B \ker{B^* -z} = \ker{B_T ^* -\ov{z} }$, and $C _B b (B) = b (B_T ) ^* C _B$.

\begin{proof}
    We have $B \simeq \Z _\Theta$ and $B_T \simeq \Z _{\Theta ^T}$. Composing the unitary operators effecting these equivalences with $C _\Theta$ yields $C _B$.
\end{proof}

\subsection{Measure spaces}

Let $\Sigma$ be any  $\C ^{n\times n}$ positive regular matrix-valued measure on $\R$ which obeys the Herglotz condition:
$$ \left( \intfty \frac{1}{1+t ^2 } \Sigma (dt)  \vec{v} , \vec{w} \right) _{\C ^n} < \infty, $$ for any $\vec{v}, \vec{w} \in \C ^n$.
We define the measure space $L^2 _\Sigma$ to be the space of all $\C ^n$-valued functions on $\R$ which are square-integrable with respect to $\Sigma$, \emph{i.e.} $f \in L^2 _\Sigma$ provided that $$ \intfty \left( \Sigma (dt) f(t) , f(t) \right) _{\C ^n } < \infty.$$ for any $z \in \C \sm \R$, define the $\C ^{n\times n}$ matrix function
$$ \delta _z (t)  := \frac{i}{\pi}\frac{1}{t-\ov{z}} \bm{1} _n. $$

Suppose that $\Theta$ is a contractive analytic function such that
$$ \re{G_\Theta (z)} = P \im{z}  + \intfty \re{\frac{i}{\pi} \frac{1}{t-z}} \Sigma (dt ). $$

The deBranges isometry $$ W _\Theta : L^2 _\Sigma \rightarrow \L (\Theta ), $$ defined by
$$ \left( (W _\Theta h ) (z) , \vec{v} \right) _{\C ^n} := \left( \frac{1}{i\pi} \intfty \ov{\frac{1}{\pi(t-\ov{z})}} \Sigma (dt) h(t) , \vec{v} \right) =  \ip{h}{\delta _z \vec{v}} _{\Sigma}, $$
where $\ip{\cdot}{\cdot} _\Sigma$ denotes the inner product in $L^2 _\Sigma$ is an isometry of $L^2 _\Theta := L^2 _\Sigma $ into $\L (\Theta)$. The range of $W_\Theta$ is $\L (\Psi ) \subset \L (\Theta )$ where $$ G _\Psi (z) = G_\Theta (z)  +i z P,$$  and the orthogonal complement of the range of $W_\Theta$
is the closed linear span of the constant functions $ \bigvee P \C ^n.$
 One can then check that the reproducing kernel for $\L (\Theta )$ is given by the formula
\be  \left( K _w ^\Theta (z) \vec{v} , \vec{w} \right) _{\C ^n} = \left( (\pi W \delta _w (z) + \frac{P}{\pi}) \vec{v} , \vec{w} \right) = \ip{\delta _w \vec{v} }{\delta _z \vec{w} } _\Sigma + \left( \frac{P}{\pi} \vec{v} ,
\vec{w} \right) _{\C ^n} \label{Hergkern} \ee

Also notice that if $P = 0$ and  $\Theta $ is a characteristic function so that $\Theta (i) =0$, that this implies that $G _\Theta (i) = \bm{1}$ so that
$$ \bm{1} =  \intfty \re{\frac{1}{i\pi} \frac{1}{t-i}} \Sigma (dt ) = \intfty \frac{1}{1+t^2} \Sigma (dt), $$ and this implies that the vectors
$\delta _i e_k$, $1 \leq k \leq n$ are an orthonormal set.

\section{Non-canonical representations of symmetric operators}
\label{represent}

We are now sufficiently prepared to begin pursuing the main theory and results of this paper. For any $A \in \Ext{B}$ we can construct a representation of $B$ as multiplication on a space of analytic functions on $\C \sm \R$ as follows:

Let $$\mc{K} _z := \mc{K} \ominus \ran{B  -\ov{z}} = ( \mc{K} \ominus \H  ) \oplus \ker{B ^* -z }. $$

For any $w, z \in \C \sm \R$, if $A$ is densely defined (so that $A = b^{-1} (U)$ and $U$ does not have $1$ as an eigenvalue) let \be  U_{w,z} := (A-w) (A-z) ^{-1}. \label{normal} \ee If
However $A = b^{-1} (U) P _{U} (\T \sm \{ 1 \} )$ and $U$ is a unitary extension of $V = b(B)$ which has $1$ as an eigenvalue let
\be U_{w,z} := \left( (i-w) + U (i + w ) \right) \left( (i-z) + U (i +z) \right) ^{-1}. \label{except} \ee  These two formulas coincide when $U$ does not have $1$ as an eigenvalue.

Then it is not difficult to verify as in \cite[Section 1.2]{Krein} that (regardless of whether $A$ is densely defined or not) for any $w, z \in \C \sm \R$, $U_{w,z}$ has the following properties:
\bn
    \item $U_{w,z}$ is invertible.
    \item $U _{w,z} : \mc{K} _w  \rightarrow \mc{K} _z$ is a bijection.
\en

Note that
$$ P_\H U_{w,z} \ker{B ^* -w} \subset P _\H \left( \ker{B ^* -z} \oplus (\K \ominus \H ) \right) \subset \ker{B^* -z}. $$

Given any fixed $w \in \C \sm \R$, let $J_w : \C ^n \rightarrow \ker{B^* -w }$ be a bounded isomorphism (a bounded linear map with bounded inverse).
We can then define the map
$$ \Gamma _A ^w: \C \sm \R \rightarrow \mc{B} (\C ^n , \H ) ,$$ by
\be \Gamma _A ^w (z) := P _\H U _{w, \ov{z}} P_w J_w = P_\H (A-w)(A-\ov{z}) ^{-1} J_w, \label{ncmodel} \ee (the last formula holds for the case where $A$ is densely defined) where $P_w$ projects onto $\ker{B^* -w}$ and it follows that if $A \in \Ext{B}$ is actually a canonical element of $\Ext{B}$ that
$\Gamma _A $ is a \emph{model} for $B$ as defined in \cite{AMR}. Namely, recall:

\begin{defn}
Given $B \in \sym{n}{\H}$, let $\J$ be a Hilbert space with $\dim{\J} =n$. A map  $\Gamma: \C \setminus \R \to \mathcal{B}(\mathcal{J}, \mathcal{H}),$ the space of bounded linear maps from $\J$ to $\H$, is a
  \emph{model} for $B$ if $\Gamma$ satisfies the following conditions:
\begin{equation} \label{I}
\Gamma: \C \setminus \R \to \mathcal{B}(\mathcal{J}, \mathcal{H}) \quad \mbox{is co-analytic};
\end{equation}
\begin{equation}
\Gamma(\lambda): \mathcal{J} \to \ran{B  - \lambda I} ^{\perp} \quad \mbox{is invertible for each $\lambda \in \C \setminus \R$};
\end{equation}
\begin{equation}
\Gamma(z)^{*} \Gamma(\lambda): \mathcal{J} \to \mathcal{J} \quad \mbox{is invertible when $\lambda, z \in \C_{+}$ and when $\lambda, z \in \C_{-}$}; \label{condition}
\end{equation}
\begin{equation}
\bigvee_{\Im \lambda \not = 0} \ran{ \Gamma(\lambda)} = \mathcal{H},
\end{equation}
where $\bigvee$ denotes the closed linear span. \label{model}
\end{defn}

Recall that as shown in \cite{AMR}, any model $\Gamma$ for $B \in \sym{n}{\H}$ can be used to construct a reproducing kernel Hilbert space of analytic functions $\H _\Gamma$ on $\C \sm \R$ and
a unitary $U_\Gamma : \H \rightarrow \H _\Gamma$ such that the image of $B$ under this unitary transformation acts as multiplication by $z$.

Now if $A \in \Ext{B}$ is non-canonical then $\Gamma _A ^w$ as defined in equation (\ref{ncmodel}) does not necessarily satisfy the conditions of a model as defined in Definition \ref{model}.
Despite this $\Gamma _A ^w$ has similar properties to a model and can still be used to construct a reproducing kernel Hilbert space of analytic functions $\H _A$ on $\C \sm \R$, and (at least in the case under consideration where $\Theta _B$ is inner) an isometry $U_A : \H \rightarrow \H _A$ such that $U_A B U_A ^*$ again acts as multiplication by $z$ in $\H _A$.

This motivates the definition of a non-canonical model which includes these generalized models $\Gamma _A$ arising from non-canonical $A \in \Ext{B}$:

\begin{defn}
    Let $\J $ be any $n-$dimensional Hilbert space and suppose that $B \in \sym{n}{\H}$. If $\mc{B}(\J , \H)$ is the space of bounded linear maps from $\J$ to $\H$, we say that $\Gamma : \C \sm \R \rightarrow \mc{B} (\J , \H)$ is a \emph{quasi-model} for $B \in \sym{n}{\H}$ if $\Gamma$ satisfies the following two conditions:
 \be \Gamma : \C \sm \R \rightarrow \mc{B} (\J , \H) \quad \mbox{is co-analytic}; \ee
 \be \Gamma (z) : \J \rightarrow \ker{B^* - \ov{z}}. \ee
\end{defn}

Given a quasi-model $\Ga$, we define
 \be m_{\pm} := \max _{z\in \C _\pm} \dim{\ker{\Gamma (z) } ^\perp} ,\ee $\Ga$ is then said to have \emph{rank} $(m_- , m_+)$, $0 \leq m_\pm \leq n$. The quasi-model $\Gamma$ is said to have full rank if $m_+ = n = m_-$.

\subsection{Basic properties of quasi-models}

\begin{defn}
    Let $\Pi _\Ga ^+$ be the set of all points in $\C _+$ for which $\dim{\ker{\Gamma (z) ^\perp}} = m _+$, and define $\Pi _\Ga ^-$ similarly.
Let $\Sigma _\Ga ^\pm := \C _\pm \sm \Pi _\Ga ^\pm$. We will also use the notation $\Pi _\Ga = \Pi _\Ga ^+ \cup \Pi _\Ga ^-$ and $\Sigma _\Ga = \Sigma _\Ga ^+ \cup \Sigma _\Ga ^-$.

\end{defn}

We will now show that any quasi-model $\Gamma$ of rank $(n, n)$ has a property similar to the property (\ref{condition}) for a model.

\begin{prop}
    If $B \in \sym{n}{\H}$ and $\Gamma$ is a quasi-model for $B$  then $\Gamma ( z ) ^* \Gamma (w) $ is a quasi-affinity on $\J$ whenever $m_+ =n$ and $z,w \in \Pi _\Gamma ^+$ or
whenever $m_- = n$ and $z,w \in \Pi _\Gamma ^-$.
  \label{qap}
\end{prop}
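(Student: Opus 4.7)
The plan is to reduce the quasi-affinity claim for $\Gamma(z)^*\Gamma(w)$ to the triviality of a deficiency-subspace intersection, establish this triviality by a short computation using symmetry of $B$, and then appeal to finite-dimensional duality to cover both orderings of the imaginary parts of $z$ and $w$.

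For the reduction, since $\dim\J = n < \infty$, a quasi-affinity on $\J$ is just an invertible linear map, so it suffices to check injectivity. Fix $z, w \in \Pi_\Gamma^+$; the $\Pi_\Gamma^-$ case is identical. If $z = w$ then $\Gamma(z)^*\Gamma(z)$ is positive definite on $\J$ by the injectivity of $\Gamma(z)$, so we may assume $z \neq w$. The hypothesis $m_+ = n$ combined with $\Gamma(z) : \J \rightarrow \ker{B^*-\ov{z}}$ (an $n$-dimensional target) makes $\Gamma(z)$ a linear bijection onto $\ker{B^*-\ov{z}}$, and similarly for $\Gamma(w)$. In particular $\ran\Gamma(z) = \ker{B^*-\ov{z}}$, so $\ker{\Gamma(z)^*} = \ker{B^*-\ov{z}}^{\perp} = \ran{B-z}$, and hence
$$
\Gamma(z)^*\Gamma(w)v = 0 \;\;\Longleftrightarrow\;\; \Gamma(w)v \in \ran{B-z} \cap \ker{B^*-\ov{w}} .
$$
The analogous equivalence with $z$ and $w$ interchanged holds for $\Gamma(w)^*\Gamma(z)$. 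Since on the finite-dimensional space $\J$ an operator is injective iff its adjoint is, it suffices to show that \emph{at least one} of $\ran{B-z} \cap \ker{B^*-\ov{w}}$ or $\ran{B-w} \cap \ker{B^*-\ov{z}}$ is trivial.

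For the key calculation, let $f$ lie in the first intersection and write $f = (B-z)g$ for some $g \in \dom{B}$. Because $f \in \ker{B^*-\ov{w}}$, the defining property of the adjoint relation gives $\ip{Bh}{f} = w\,\ip{h}{f}$ for every $h \in \dom{B}$, valid regardless of whether $B$ is densely defined; specializing to $h = g$ yields $\|f\|^2 = (w-z)\,\ip{g}{f}$. The symmetry of $B$ forces $\ip{g}{Bg} \in \R$, and $Bg = f + zg$ gives $\ip{g}{f} + \ov{z}\,\|g\|^2 \in \R$, so $\im{\ip{g}{f}} = \im{z}\,\|g\|^2$. Substituting and taking imaginary parts collapses to the single real identity
$$
\|f\|^2\,(\im{z} - \im{w}) \;=\; \im{z}\,|w-z|^2\,\|g\|^2 .
$$
For $z, w \in \C_+$ with $\im{z} \leq \im{w}$, the left side is $\leq 0$ while the right side is $\geq 0$ (since $\im{z} > 0$ and $|w-z|^2 > 0$), so both vanish, forcing $g = 0$ and hence $f = (B-z)g = 0$. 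Interchanging the roles of $z$ and $w$ yields the same identity with all sign-sensitive quantities swapped and handles the remaining case $\im{z} > \im{w}$ for the second intersection.

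Combining these, for every $z, w \in \Pi_\Gamma^+$ at least one of the two intersections is trivial, so either $\Gamma(z)^*\Gamma(w)$ or its adjoint $\Gamma(w)^*\Gamma(z)$ is injective; by finite-dimensional duality the other is injective as well, and hence $\Gamma(z)^*\Gamma(w)$ is invertible on $\J$, i.e., a quasi-affinity. The only genuinely delicate step is the arithmetic producing the displayed identity and recognizing its inherent asymmetry in $(z,w)$, which is what necessitates the duality argument; everything else is structural, using only the symmetry of $B$ and the full-rank hypothesis $m_+ = n$.
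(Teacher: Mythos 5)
Your proof is correct in the finite-index setting, but it takes a genuinely different route from the paper's. The paper proves the key fact --- that no nonzero vector of one deficiency space $\ker{B^*-z}$ can be orthogonal to another deficiency space $\ker{B^*-w}$ with $z,w$ in the same half-plane --- by first building the auxiliary closed operator $B_z$ whose graph is the closure of $\mf{G}(B)\dotplus\{(h,zh):h\in\ker{B^*-z}\}$, showing its spectrum lies in the closed half-plane containing $z$, and then applying the resolvent $(B_z-\ov{w})^{-1}$ to reach a contradiction; this gives injectivity of the Gram matrix $Y(w,z)$ for \emph{every} pair in the same half-plane, and dense range then follows by applying the same statement to the reversed pair. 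You avoid the auxiliary operator entirely: writing $f=(B-z)g$ in $\ran{B-z}\cap\ker{B^*-\ov{w}}$ and exploiting only the symmetry of $B$ (valid in the relation setting since $\ker{B^*-\ov{w}}=\ran{B-w}^\perp$), you derive the identity $\|f\|^2(\im{z}-\im{w})=\im{z}\,|w-z|^2\|g\|^2$, whose signs kill the intersection when $\im{z}\le\im{w}$, and you handle the opposite ordering by passing to the adjoint $\Gamma(w)^*\Gamma(z)$ and the mirrored intersection, using $\dim{\J}<\infty$ to transfer injectivity back. Your computations check out (including the normalization $\ip{Bh}{f}=w\ip{h}{f}$ and the imaginary-part bookkeeping), so this is a valid, more elementary proof of the proposition as it is used later in the paper, where $n<\infty$. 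The trade-off to be aware of: the proposition is stated before the paper's standing assumption $n<\infty$ is imposed, and the deliberate use of ``quasi-affinity'' (see the remark following the statement) signals that the case $n=\infty$ is meant to be covered; there your final duality step fails, since for a fixed pair with $\im{z}\neq\im{w}$ your sign argument yields only injectivity \emph{or} dense range of $\Gamma(z)^*\Gamma(w)$, not both, whereas the paper's $B_z$-argument gives injectivity uniformly in the pair and hence both properties. So if you want the full generality of the statement, you would still need something like the paper's spectral localization of $B_z$ (or another argument controlling both intersections simultaneously).
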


\begin{remark}
Note that in the case where $n<\infty$, which is the case we are primarily studying, when $\Gamma (z) ^* \Gamma (w)$ is a quasi-affinity,
it is acting between finite dimensional spaces and hence is in fact bounded and invertible.  Also the reason this proposition is important is that we will shortly construct a reproducing kernel Hilbert space $\H _\Ga$ whose reproducing kernel is $K_w (z) = \Gamma ^* (z) \Gamma (w)$, and it will be useful to know when this is invertible.
\end{remark}

This proposition will be the consequence of the following:

\begin{prop}
    For each $z \in \C \sm \R$, let $\{ \delta  _k(z) \} _{k=1} ^n$ be a basis for $\ker{B^* -z}$.  Then the linear operator $Y$ on $l^2 (\N )$ with entries
    $$ Y (w,z) := \left[ \ip{\delta _j (w) }{\delta  _k (z)} \right] _{1\leq j,k \leq n} ,$$ is a quasi-affinity for any $z,w \in \C _+$ or $z,w \in \C _-$, \emph{i.e.} it is injective and has dense range (and
    hence an inverse which is potentially unbounded). \label{invert}
\end{prop}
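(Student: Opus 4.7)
The plan is to pass to the Herglotz-space model. By Corollary \ref{lessthan}, $B$ is unitarily equivalent to $\mf{Z} _\Theta \in \sym{n}{\L (\Theta )}$ where $\Theta = \Theta _B$ is the Livsic characteristic function of $B$, so in particular $\Theta (i) = 0$. The proof of Lemma \ref{Hergz} already identifies the deficiency subspaces in this model as $\ker{\mf{Z} _\Theta ^* - z} = \bigvee K _{\ov z} ^\Theta \C ^n$, spanned by reproducing-kernel vectors. Because any two bases of $\ker{B^* - z}$ differ by a bounded invertible change of basis, and any such conjugation preserves the quasi-affinity property, I may without loss take $\delta _k (z) = K _{\ov z} ^\Theta \vec e _k$. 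With this choice the reproducing-kernel identity gives
\[
Y(w,z) _{jk} = \ip{K^\Theta _{\ov w} \vec e _j}{K^\Theta _{\ov z} \vec e _k} _\Theta = \left( K^\Theta _{\ov w} (\ov z ) \vec e _j , \vec e _k \right) _{\C ^n} ,
\]
so $Y(w,z)$ equals up to transposition the $n \times n$ matrix $K^\Theta _{\ov w}(\ov z)$.

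Substituting the kernel formula $K^\Theta _u (v) = \tfrac{i}{\pi} \frac{G _\Theta (v) + G _\Theta (u)^*}{v - \ov u}$ and using the reflection $G _\Theta (\ov \zeta ) = -G _\Theta (\zeta )^*$ collapses the question to invertibility of $G _\Theta (z) ^* + G _\Theta (w)$ for $z, w \in \C _+$, since the denominator $w - \ov z$ is a nonzero scalar when $w \in \C _+$ and $\ov z \in \C _-$. A direct manipulation exploiting $G _\Theta = (1 + \Theta )(1 - \Theta )^{-1}$ yields the factorization
\[
G _\Theta (z) ^* + G _\Theta (w) \ = \ 2 \, (1 - \Theta (z) ^*) ^{-1} \left( 1 - \Theta (z) ^* \Theta (w) \right) (1 - \Theta (w) ) ^{-1} ,
\]
so that the desired invertibility of $Y(w,z)$ reduces to invertibility of $1 - \Theta (z) ^* \Theta (w)$.

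The main, and essentially only nontrivial, step is therefore to establish bounded invertibility of $1 - \Theta (z) ^* \Theta (w)$, and this is exactly where $\Theta (i) = 0$ (a defining feature of the Livsic characteristic function, in turn a consequence of the simplicity of $B$) enters. Pulling back to the unit disc via $\theta := \Theta \circ b ^{-1}$ produces an analytic operator-valued contraction with $\theta (0) = 0$, so the operator-valued Schwarz lemma gives $\| \Theta (z) \| \leq | b(z) | < 1$ for every $z \in \C _+$. Consequently $\| \Theta (z) ^* \Theta (w) \| < 1$ and a Neumann series supplies a bounded inverse of $1 - \Theta (z) ^* \Theta (w)$; this in fact yields bounded invertibility of $Y(w,z)$, which is strictly stronger than quasi-affinity and covers both the $n < \infty$ and $n = \infty$ cases uniformly. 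The $\C _-$ case is entirely symmetric, obtained either by repeating the calculation on the lower half-plane via the same reflection identity or by intertwining with the conjugation of Corollary \ref{conjugation}, which sends $\ker{B^* - z}$ to $\ker{B_T ^* - \ov z}$. Once the Schwarz-type bound is in hand, every remaining step is purely algebraic.
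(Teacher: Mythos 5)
Your argument is correct in the case $n<\infty$, but it is a genuinely different route from the paper's. The paper argues abstractly and basis-independently: for each nonreal $z$ it builds the closed operator $B_z$ extending $B$ (graph $\mf{G}(B)\dotplus\{(h_z,zh_z)\}$), shows $\sigma(B_z)$ lies in one closed half-plane, and then, assuming $Y(w,z)\vec{c}=0$, concludes that $\psi_z=\sum_k\ov{c_k}\delta_k(z)$ lies in $\ran{B-\ov{w}}$ and hence, applying $(B_z-\ov{w})^{-1}$, that $\psi_z\in\dom{B}$ would be a nonreal eigenvector of the symmetric $B$ --- a contradiction; dense range then follows from $Y(w,z)^*=Y(z,w)$. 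Your proof instead passes to the Herglotz model via Corollary \ref{lessthan} and Lemma \ref{Hergz} (legitimate, since these precede the proposition and do not use it), identifies $Y(w,z)$ up to transpose with the kernel matrix $K^\Theta_{\ov{w}}(\ov{z})$, and uses the factorization of $G_\Theta(z)^*+G_\Theta(w)$ together with the Schwarz lemma ($\Theta_B(i)=0$, so $\|\Theta_B(z)\|\leq|b(z)|<1$) to get bounded invertibility of $1-\Theta(z)^*\Theta(w)$. Your computations (the reproducing-kernel identity, the reflection $G_\Theta(\ov{\zeta})=-G_\Theta(\zeta)^*$, and the factorization) check out, and your approach buys something the paper's does not state explicitly: an explicit quantitative formula showing the Gram matrix is boundedly invertible, not merely a quasi-affinity.

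Two caveats. First, your claim that the argument ``covers both the $n<\infty$ and $n=\infty$ cases uniformly'' is overstated: the reduction to the kernel basis rests on the assertion that any two bases of $\ker{B^*-z}$ differ by a bounded invertible change of basis, which is automatic only when $n<\infty$; for $n=\infty$ an arbitrary basis need not be so related to the family $\{K^\Theta_{\ov{z}}e_k\}$, the Gram matrix of an arbitrary basis need not even act boundedly on $\ell^2(\N)$, and the paper develops the Herglotz model only under the standing assumption $n<\infty$. The paper's direct argument, by contrast, works verbatim for any basis and any $n$. Second, you should note explicitly that the same Schwarz bound is what guarantees $\{K^\Theta_{\ov{z}}e_k\}$ is genuinely a basis (linear independence follows from invertibility of $\re{G_\Theta}$ at each nonreal point), so that the change-of-basis matrix exists; as written this is implicit. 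Neither caveat affects the finite-index case, which is the one the rest of the paper uses.
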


The proof of this proposition needs a little set up. Given a closed linear transformation $T$ with domain $\dom{T} \subset \H$, a point $z \in \C$ is called a \emph{regular} point of $T$ if
$T-z$ is bounded below on $\dom{T}$, \emph{i.e.}, $\| (T-z)f) \| \geq c_z \| f \| $ for all $f \in \dom{T}$. Let $\Om _T$ denote the set of regular points of $T$. If $B \in \sym{n}{\H}$, then since
$B$ is symmetric we have that $\C \sm \R \subset \Om _B \subset \C$.  The symmetric linear transformation $B$ is called regular if $\Om _B = \C$. For any $z \in \Om _B$, let
$\mf{G} _z $ be the closure of the linear relation:
$ \mf{G} (B) \dotplus \{ (h_z , z h_z) | \ h_z \in \ker{B^* -z} \} ,$ and $\dotplus$ denotes the non-orthogonal direct sum of linearly independent subspaces.

\begin{lemma}
    There is a closed linear operator $B_z$ extending $B$ such that $\mf{G} (B_z) = \mf{G} _z$.
\end{lemma}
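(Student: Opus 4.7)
The plan is to verify directly that $\mf{G}_z$, by construction a closed subspace of $\H\oplus\H$, is in fact the graph of a closed linear operator extending $B$. Since it already contains $\mf{G}(B)$ and is closed, the only substantive issue is the graph condition: $(0,f) \in \mf{G}_z \Rightarrow f = 0$. The decisive hypothesis is $z \in \Om_B$, which gives a constant $c > 0$ with $\| (B-z) x\| \geq c \|x\|$ for all $x \in \dom{B}$, so in particular $B-z$ is injective with closed range.

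First I would show that the algebraic sum $\mf{G}(B) \dotplus \{ (h_z, z h_z) \mid h_z \in \ker{B^* - z}\}$ is already closed, so that no closure is needed. Given a convergent sequence $(x_n + h_n , B x_n + z h_n) \to (u,v)$ with $x_n \in \dom{B}$ and $h_n \in \ker{B^* - z}$, subtracting $z$ times the first coordinate from the second gives $(B-z) x_n \to v - z u$. The boundedness below of $B-z$ forces $\{ x_n \}$ to be Cauchy, so $x_n \to x$. Since $B x_n = (B-z) x_n + z x_n$ also converges and $B$ is closed, $x \in \dom{B}$ and $B x_n \to B x$. Then $h_n = (x_n + h_n ) - x_n \to u - x =: h$, and closedness of $\ker{B^* - z} = \ran{B - \ov{z}}^\perp$ gives $h \in \ker{B^*-z}$. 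Thus $(u,v) = (x + h , B x + z h)$ lies in the algebraic sum itself.

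Now take $(0,f) \in \mf{G}_z$. By the representation above, $0 = x + h$ and $f = B x - z x$ with $x \in \dom{B} \cap \ker{B^* - z}$. I would conclude $x = 0$ using symmetry: since $x \in \dom{B}$, the quantity $\ip{x}{Bx}$ equals $\ip{Bx}{x} = \ov{\ip{x}{Bx}}$ and is therefore real. On the other hand, $x \in \ker{B^* - z}$ means $x \perp \ran{B - \ov{z}}$, so $\ip{x}{(B-\ov{z}) x} = 0$, which rearranges to $\ip{x}{Bx} = z \|x\|^2$. For $z \in \C \sm \R$ the two equations are incompatible unless $x = 0$, whence $f = (B-z) x = 0$. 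This identifies $B_z$ as the closed linear operator with $\dom{B_z} = \dom{B} + \ker{B^* - z}$ and action $B_z (x + h) = B x + z h$ (well-defined by uniqueness of the decomposition, which also follows from $z \in \Om_B$ since any $(y, By)$ in the intersection of the two summands must satisfy $(B-z) y = 0$, hence $y = 0$).

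The main obstacle is the last step in the exceptional case where $z \in \Om_B$ is real (so that the reality-of-expectation argument does not immediately rule out $x \neq 0$). Here one must work a little harder, using that $B x - z x \in \ov{\dom{B}}^\perp$ while $(B-z)$ is bounded below on $\dom{B}$; the bounded-below property together with a careful analysis of the obstruction $B x - z x$ in $\dom{B}^\perp$ (in particular, that $x \in \dom{B} \subset \ov{\dom{B}}$ is orthogonal to it) pins down $x = 0$ and closes the argument.
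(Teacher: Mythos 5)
Your handling of the graph condition is correct and, if anything, a little sharper than the paper's: by exploiting that $B-z$ is bounded below you show the algebraic sum $\mf{G}(B)\dotplus\{(h,zh)\,|\,h\in\ker{B^*-z}\}$ is already closed, so that $\mf{G}_z$ is exactly this sum with $\dom{B_z}=\dom{B}+\ker{B^*-z}$, and then the symmetry argument ($\ip{x}{Bx}$ real versus $\ip{x}{Bx}=z\|x\|^2$ for $x\in\dom{B}\cap\ker{B^*-z}$) kills any multivalued part. The paper reaches the same conclusion by running essentially the same sequence argument, but only for sequences converging to a pair $(0,g)$ with $g\perp\dom{B}$, using closedness of $\ran{B-z}$ rather than closedness of the whole sum. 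Your closing worry about real $z\in\Om_B$ is not where the substance lies: the paper's own proof, and every later use of $B_z$ (e.g.\ in the proof of Proposition \ref{invert}), takes $z\in\C\sm\R$, where your argument is complete, so the hand-waved "careful analysis" in that last paragraph is not needed.

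The genuine omission is density of $\dom{B_z}$. In this part of the paper "linear operator" (as opposed to "linear transformation") means densely defined --- $B$ itself is allowed to be non-densely defined, which is precisely why relations are being used --- and roughly half of the paper's proof is devoted to this point, which your proposal never addresses. The argument is short and in the same spirit as the steps you already use: if $\phi\perp\dom{B_z}=\dom{B}+\ker{B^*-z}$, then $\phi\perp\ker{B^*-z}$ forces $\phi\in\ran{B-\ov{z}}$, say $\phi=(B-\ov{z})f$ with $f\in\dom{B}$, while $\phi\perp\dom{B}$ gives $0=\ip{f}{(B-\ov{z})f}$, i.e.\ $\ip{Bf}{f}=z\|f\|^2$, impossible for $f\neq 0$ since $B$ is symmetric and $z\in\C\sm\R$; hence $\phi=0$. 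Without this step the conclusion of the lemma as the paper uses it (a densely defined closed operator $B_z$ to which the subsequent spectral lemma and its resolvent are applied) is not established, so you should append this density argument to your otherwise correct proof.
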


\begin{proof}
    It suffices to prove that $\mf{G} _z$ is the graph of a densely defined closed linear operator.

Clearly $\mf{G} (B_z) \subset B^*$. To prove that $\mf{G} (B_z)$ is the graph of a linear transformation, we need to prove that the intersection of the multi-valued part of $B^*$ with $\mf{G} _z$ is the zero element:
$$ \{ (0 ,g ) | \ g \in B^* (0) \} \cap \mf{G} _z = \{ (0, 0 ) \} ,$$  where recall that $B^* (0) = \dom{B} ^\perp$.

Suppose not, then we can find a sequence $(f_n ) \subset \dom{B}$ and a sequence $h_n \in \ker{B^* -z}$ such that $(f_n +h_n, Bf_n + zh_n ) \rightarrow (0, g) $ where $g \perp \dom{B}$.
It follows that $$ (B - z) f_n  = Bf_n +z h_n -z(f_n +h_n) \rightarrow g - 0 = g.$$ Since $\ran{B-z}$ is closed it follows that there is an $f \in \dom{B}$ such that
$$ (B-z) f =g \perp \dom{B}.$$ However this would then imply that
$$ 0 = \ip{g}{f} = \ip{(B-z)f}{f} = \ip{Bf}{f} - z \ip{f}{f}, $$ which is impossible as $B$ is symmetric and $z \in \C \sm \R$. This proves
that $\mf{G} _z$ is the graph of a linear transformation $B_z$, it remains to prove that $B_z$ is densely defined.

To prove that $B_z$ is a linear operator, \emph{i.e.} densely defined, suppose that $\phi \in \H$ is orthogonal to $\dom{B_z}$. Then $\phi \perp \dom{B}$ and $\phi \perp \ker{B^* -z}$. Hence $\phi \in \ran{B-\ov{z}}$ and so $\phi = (B-\ov{z})f$ for
some $f \in \dom{B}$. But $\phi$ is orthogonal to $\dom{B}$ as well so that
$$  0 = \ip{f}{\phi} = \ip{f}{(B-\ov{z}) f},$$ showing that
$$ \ip{Bf}{f} = z \ip{f}{f}, $$ which as before is impossible as $B$ is symmetric.
\end{proof}

\begin{lemma}
    Suppose that $z \in \C \sm \R$. The spectrum of the operator $B _z$ is contained in $\ov{\C _+}$ or $\ov{\C _-}$ when $z \in \C_+$ or $\C_-$, respectively.
\end{lemma}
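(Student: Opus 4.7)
My strategy is to analyze the numerical range of $B_z$, obtain a uniform lower bound for $B_z - \lambda$ on the opposite half-plane, and then bootstrap from a single explicit point of the resolvent set via a continuity argument. The first step is the identity
\[
\im{\ip{B_z \phi}{\phi}} = \im{z} \cdot \|h\|^2, \qquad \phi = f + h \in \dom{B_z},\ f \in \dom{B},\ h \in \ker{B^* - z}.
\]
The defining property $h \in \ran{B - \ov{z}}^\perp$ forces $\ip{Bf}{h} = \ov{z}\,\ip{f}{h}$, so the cross terms in $\ip{B_z \phi}{\phi}$ collapse to $2\re{\ov{z}\,\ip{f}{h}}$, which is real; together with $\ip{Bf}{f} \in \R$ by symmetry of $B$, only the contribution from $z\|h\|^2$ survives in the imaginary part. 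Hence for $z \in \C_+$ the numerical range of $B_z$ lies in $\ov{\C_+}$, and Cauchy--Schwarz yields the a priori bound
\[
\|(B_z - \lambda)\phi\| \geq |\im{\lambda}|\, \|\phi\|, \qquad \lambda \in \C_-,
\]
so $B_z - \lambda$ is bounded below with closed range for every $\lambda \in \C_-$.

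Next I would exhibit one explicit point of $\rho(B_z) \cap \C_-$, namely $\ov{z}$. Writing $(B_z - \ov{z})(f + h) = (B - \ov{z})f + 2i\,\im{z} \cdot h$ and using the orthogonal decomposition $\H = \ran{B - \ov{z}} \oplus \ker{B^* - z}$ together with the fact that $B - \ov{z} : \dom{B} \to \ran{B - \ov{z}}$ is a bijection (bounded below by the symmetry of $B$), any $\psi \in \H$ splits as $\psi_1 + \psi_2$, and the ansatz $f := (B - \ov{z})^{-1}\psi_1$, $h := \psi_2/(2i\,\im{z})$ solves $(B_z - \ov{z})(f + h) = \psi$; injectivity is already delivered by the a priori bound.

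Finally, I would propagate invertibility throughout $\C_-$ by connectedness. The set $\rho(B_z) \cap \C_-$ is open and nonempty; to see it is closed in $\C_-$, take $\lambda_n \in \rho(B_z) \cap \C_-$ with $\lambda_n \to \lambda_0 \in \C_-$. The uniform bound $\|(B_z - \lambda_n)^{-1}\| \leq |\im{\lambda_n}|^{-1}$ together with the standard resolvent identity
\[
(B_z - \lambda_n)^{-1} - (B_z - \lambda_m)^{-1} = (\lambda_n - \lambda_m)(B_z - \lambda_n)^{-1}(B_z - \lambda_m)^{-1}
\]
force $\{(B_z - \lambda_n)^{-1}\}$ to be Cauchy in operator norm, and since $B_z$ is closed the limit operator must invert $B_z - \lambda_0$. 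Hence $\C_- \subset \rho(B_z)$, i.e.\ $\sigma(B_z) \subset \ov{\C_+}$; the case $z \in \C_-$ is symmetric, or alternatively follows by conjugating with the anti-linear intertwiner $C_B$ of Corollary \ref{conjugation}. The main obstacle is this final propagation step, since the numerical-range estimate only provides boundedness below and one must additionally invoke either the Cauchy argument above or the stability of the semi-Fredholm index to establish surjectivity on all of $\C_-$.
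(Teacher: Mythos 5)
Your proof is correct, and it is in fact more than the paper itself supplies: the paper omits the argument entirely, remarking only that it is identical to Lemma 2.6 of \cite{AMR}, so your write-up is a genuinely self-contained substitute. The three steps all hold up: the cross-term cancellation $\ip{Bf}{h}=\ov{z}\ip{f}{h}$ (from $h \perp \ran{B-\ov{z}}$) does force $\im{\ip{B_z\phi}{\phi}}=\im{z}\|h\|^2$, giving the lower bound $\|(B_z-\lambda)\phi\|\geq |\im{\lambda}|\,\|\phi\|$ for $\lambda$ in the opposite half-plane; the explicit inversion at $\lambda=\ov{z}$ via the decomposition $\H=\ran{B-\ov{z}}\oplus\ker{B^*-z}$ is valid; and the open-closed (Cauchy resolvent) propagation through the connected set $\C_-$ is the standard argument and is carried out correctly, using closedness of $B_z$ to identify the norm limit of the resolvents as $(B_z-\lambda_0)^{-1}$. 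One small point you should make explicit: $B_z$ is defined through the \emph{closure} of $\mf{G}(B)\dotplus\{(h,zh)\,|\,h\in\ker{B^*-z}\}$, so writing every $\phi\in\dom{B_z}$ as $f+h$ needs justification. Either observe that this graph sum is already closed (if $(f_n+h_n, Bf_n+zh_n)$ converges, then $(B-z)f_n$ converges, and since $B-z$ is bounded below and $B$ is closed, $f_n\to f\in\dom{B}$ and $h_n\to h\in\ker{B^*-z}$), or note that your numerical-range inequality extends to the graph closure by continuity while the surjectivity step only uses the uncompleted sum; either remark closes the gap in one line. As an alternative to your final propagation step, boundedness below on all of $\C_-$ plus invertibility at the single point $\ov{z}$ also yields the conclusion from the local constancy of $\dim{\ran{B_z-\lambda}^\perp}$ (the same stability fact, via \cite[Section 34]{Glazman}, that the paper uses to prove constancy of the deficiency indices), but your resolvent-limit argument is equally rigorous.
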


Since $B_z$ is a closed linear operator, the proof is identical to that of \cite[Lemma 2.6]{AMR}, and we omit it.

\begin{proof}{ (of Proposition \ref{invert})}
Given a unit vector $\vec{c} \in \C ^n $ (we take $\C ^\infty := \ell ^2 (\N)$), let
$$ \Delta _{\vec{c}} (z) := \sum _{k} \ov{c_k} \delta _k (z).$$

Now observe that
$$ Y (w,z) \vec{c} = \left( \ip{\delta _j (w) }{\Delta _{\vec{c}} (z)} \right) _{1 \leq j \leq n}.$$  Now if $Y (w,z)$ was not injective then there would be a $\vec{c} \in \C ^n$ for which
$Y \vec{c} = 0$, and hence $ 0 = \ip{\delta _j (w) }{\Delta _{\vec{c}} (z) } $ so that $\psi _z := \Delta _{\vec{c}} (z) \perp \ker{B^*  -w}$ and hence $\psi _z \in \ran{B-\ov{w}}$,
$\psi _z = (B- \ov{w}) f$ for some $f \in \dom{B}$. But then, since $\ov{w}$ does not belong to the spectrum of $B_z$,
$$(z - \ov{w} ) ^{-1} \psi _z = (B_z - \ov{w} ) ^{-1} \psi _z = f ,$$ which shows that $\psi _z \in \dom{B}$, contradicting the fact that $B$ is symmetric.

Hence $Y(w,z)$ is injective whenever $w,z \in \C _+$ or in $\C _-$. But then $Y^* (w,z) = Y (z,w )$ is also injective, proving that $Y(w,z)$ also always has dense range. This proves that $Y(z,w)$
is always a quasi-affinity of $\mc{B} (\ell ^2 (\N ))$ whenever $z,w$ are both in $\C _+$ or are both in $\C _-$.

\end{proof}

\begin{proof}{ (of Proposition \ref{qap})}

If $z, w \in \Pi _\Gamma ^+$ this follows from the observation that given any orthonormal basis $\{ j_k \}$ of $\J$, and $z \in \Pi _\Gamma ^+$, $\delta _k (\ov{z}) := \Gamma (z) j_i $ forms a basis for $\ker{B^* -\ov{z}}$, and that
$$ \Gamma ( z )  = \sum \ip{ \cdot}{j_i} \delta _i (\ov{z}), $$ so that
$$ \Gamma ^* (z) \Gamma (w) = \left[ \ip{ \delta _j (\ov{z}) }{\delta _k (\ov{w})} \right] _{1 \leq j,k \leq n }.$$

The proof of the other half of the proposition is analogous.

\end{proof}

For the remainder of this section we will assume that $n<\infty$, although
many of our arguments generalize to the case $n=\infty$ without too much difficulty.

\begin{lemma}
    The sets $\Sigma _\Gamma ^\pm = \C _\pm \sm \Pi _\Gamma ^\pm $ are contained in the zero-sets of  non-zero analytic functions in $\C _\pm$ (and hence are purely discrete with accumulation points lying only on $\R \cup \{ \infty \}$). \label{inv}
\end{lemma}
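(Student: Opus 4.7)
The plan is to exhibit, for each sign, a scalar nonzero anti-analytic function on $\C _\pm$ whose zero set contains $\Sigma _\Ga ^\pm$; the conclusion then follows from the identity principle for analytic functions, since zero sets of nonzero analytic (equivalently, anti-analytic) functions on a connected open subset of $\C$ are discrete with no accumulation points in that open set.

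First, I fix an orthonormal basis $\{ j _1 , \dots , j_n \}$ of $\J$ and any orthonormal basis $\{e_k\}_{k\geq 1}$ of the separable Hilbert space $\H$. For each multi-index $I = (i_1 < \dots < i_{m_+})$ of positive integers I form the $m_+ \times m_+$ minor
\[ F_I (z) := \det \left[ \, \ip{\Ga (z) j_l}{e_{i_k}} \, \right] _{1 \leq k, l \leq m_+}, \qquad z \in \C _+ . \]
Co-analyticity of $\Ga$ means that for every $\phi \in \H$ and $j \in \J$ the scalar function $z \mapsto \ip{\Ga (z) j}{\phi}$ is anti-analytic on $\C \sm \R$; hence each entry of the above determinant is anti-analytic in $z$ on $\C _+$, and therefore so is $F_I$.

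Next, by elementary linear algebra $z \in \Sigma _\Ga ^+$ is equivalent to $\mr{rank}\, \Ga (z) < m_+$, which in turn is equivalent to the simultaneous vanishing at $z$ of every $m_+ \times m_+$ minor of the infinite matrix $[ \, \ip{\Ga (z) j_l}{e_k}\, ]_{k \geq 1,\, 1 \leq l \leq n}$, \emph{i.e.} to $F_I (z) = 0$ for every index $I$. By the definition of $m_+$ as a maximum (which is attained since $m_+ \leq n < \infty$), there exists some $z_0 \in \Pi _\Ga ^+$ with $\mr{rank}\, \Ga (z_0) = m_+$, and at such a point at least one particular minor $F_{I_0}$ is nonzero. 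Thus $F_{I_0}$ is a nonzero anti-analytic function on the connected open set $\C _+$ whose zero set contains all of $\Sigma _\Ga ^+$. The identity principle then forces $\Sigma _\Ga ^+$ to be a discrete subset of $\C _+$ whose only possible accumulation points lie on its boundary $\R \cup \{ \infty \}$. The argument on $\C _-$ is formally identical.

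The one point requiring care is ensuring that a \emph{single} fixed minor $F_{I_0}$ vanishes on the entirety of $\Sigma _\Ga ^+$ (not merely some minor at each point, which would be a trivial observation); this is precisely what the rank characterization provides, since the statement ``$F_I (z) = 0$ for every $I$'' is in particular the statement $F_{I_0}(z) = 0$. No appeal to finer properties of $\Ga$ beyond its co-analyticity and the finite-dimensionality of $\J$ is needed.
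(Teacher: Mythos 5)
Your proof is correct and takes essentially the same route as the paper's: both arguments detect the drop in rank through the determinant of a single fixed $m_+ \times m_+$ matrix of pairings of $\Gamma (z)$ against finitely many fixed vectors of $\H$, chosen so that the determinant is nonzero at one point of maximal rank, and then invoke the identity principle on $\C _\pm$. The only cosmetic difference is that the paper pairs against the vectors $\Gamma (w) j_k$ for a fixed $w \in \Pi _\Gamma ^+$ (so its scalar function is analytic rather than anti-analytic), whereas you pair against suitably selected orthonormal basis vectors of $\H$ and quote the minor criterion for rank.
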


\begin{proof}
   Choose any $w \in \Pi _\Gamma ^+$. Let $\{ j_k \}$ be an orthonormal basis of $\J$ such that $\{ j_k \} _{k=1} ^{m_+}$ is an orthonormal basis of $\ker{\Gamma (w)} ^\perp$. Let
$\{ v_k \} _{k=1} ^{m_+}$ be the basis of $\ran{\Gamma (w)}$ defined by $v_k = \Gamma (w) j_k$ and set
$$ D_w (z) := \left[ \ip{\Gamma(w) j_k }{\Gamma (z) j_l } \right] _{1\leq k,l \leq m_+}, $$ and let $\delta _w (z) := \det{D_w (z)}$. Then $\delta _w $ is analytic (as a function of $z$) in $\C _+$ and
$\delta _w $ is not identically zero since $\delta _w (w) = \det{D_w (w) }$, and it is clear that by construction $D_w (w)$ is invertible. Now if $z \in \C _+$ is any point such that
$\delta _w (z) \neq 0$ then $D_w (z)$ is invertible and hence $\Gamma (z) | _{\ker{\Gamma (w)} ^\perp} $ is invertible as a map onto its range. Let $\wt{j} _k := P_z j_k$ where $P_z$ projects
onto $\ker{\Gamma (z) } ^\perp$. The $\wt{j} _k$ form a linearly independent set since otherwise the set of all
$$ \Gamma (z) j_k = \Gamma (z) \wt{j} _k,$$ would not be linearly independent, contradicting the fact that $\Gamma (z) | _{\ker{\Gamma (w)} ^\perp} $ is invertible. It follows that
$$ \dim{ \ker{\Gamma (z)} ^\perp } \geq m_+ = \max _{z \in \C _+} \dim{\ker{\Gamma (z) ^\perp}}, $$ for any $z \in \C _+$ such that $ \delta _w (z) \neq 0$, proving the claim.
\end{proof}

\begin{cor}
    Given any $w \in \Pi _\Gamma ^\pm $ we have that the set $$ \C _\pm \sm \{ z \in \C _\pm | \ \Gamma (z) ^* \Gamma (w) | _{\ker{\Gamma (w)  ^\perp} } \ \mbox{is invertible} \ \},$$ is contained
in the zero set of an analytic function which is not identically zero.
\end{cor}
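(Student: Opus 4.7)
The plan is to apply the very same analytic function $\delta_w$ already constructed in the proof of Lemma \ref{inv}. Fix $w \in \Pi_\Gamma^+$ (the case $w \in \Pi_\Gamma^-$ being symmetric), pick an orthonormal basis $\{j_k\}_{k=1}^{n}$ of $\J$ so that $\{j_k\}_{k=1}^{m_+}$ spans $\ker{\Gamma(w)}^\perp$, and set
\[
D_w(z) := \bigl[\,\ip{\Gamma(w) j_k}{\Gamma(z) j_l}\,\bigr]_{1 \leq k,l \leq m_+}, \qquad \delta_w(z) := \det D_w(z).
\]
Since $\Gamma$ is co-analytic, $\Gamma(z)^*$ is analytic on $\C_+$, so each entry $\ip{\Gamma(w)j_k}{\Gamma(z)j_l} = (\Gamma(z)^*\Gamma(w) j_k , j_l)_{\C^n}$ depends analytically on $z \in \C_+$, and hence so does $\delta_w$. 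As in the proof of Lemma \ref{inv}, $\delta_w(w) \neq 0$ because $D_w(w)$ is the Gram matrix of the linearly independent set $\{\Gamma(w) j_k\}_{k=1}^{m_+}$, so $\delta_w$ is not identically zero.

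Next I would observe that $D_w(z)$ is nothing but the matrix, relative to the basis $\{j_k\}_{k=1}^{m_+}$, of the operator
\[
P_{\ker{\Gamma(w)}^\perp}\,\Gamma(z)^*\Gamma(w)\big|_{\ker{\Gamma(w)}^\perp} : \ker{\Gamma(w)}^\perp \longrightarrow \ker{\Gamma(w)}^\perp.
\]
Indeed, for $\vec{c} = (c_1, \ldots, c_{m_+})$ representing an element of $\ker{\Gamma(w)}^\perp$, the $l$-th component of $D_w(z)\vec{c}$ equals $\ip{\Gamma(w)\sum_k c_k j_k}{\Gamma(z) j_l} = (\Gamma(z)^*\Gamma(w)\sum_k c_k j_k, j_l)_{\C^n}$, which is precisely the $j_l$-component of $\Gamma(z)^*\Gamma(w)\big|_{\ker{\Gamma(w)}^\perp}$ applied to $\sum_k c_k j_k$.

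Consequently, whenever $\delta_w(z) \neq 0$, the square matrix $D_w(z)$ is invertible, and so the compressed operator above is invertible on $\ker{\Gamma(w)}^\perp$. This forces $\Gamma(z)^*\Gamma(w)\big|_{\ker{\Gamma(w)}^\perp}$ to be injective, hence invertible onto its image (this is the intended meaning, since the codomain $\J$ has dimension $n \geq m_+$ and we are in finite dimensions). Therefore
\[
\C_+ \setminus \{\, z \in \C_+ \mid \Gamma(z)^*\Gamma(w)|_{\ker{\Gamma(w)}^\perp} \text{ is invertible}\,\} \;\subseteq\; \{\, z \in \C_+ \mid \delta_w(z) = 0 \,\},
\]
which is the desired conclusion. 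There is no real obstacle here; the work was already done in Lemma \ref{inv}, and the only item to verify is the elementary identification of $D_w(z)$ as the matrix of the compression of $\Gamma(z)^*\Gamma(w)$ to $\ker{\Gamma(w)}^\perp$.
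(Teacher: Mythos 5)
Your proposal is correct and is essentially the argument the paper intends: the corollary is stated as an immediate consequence of the proof of Lemma \ref{inv}, and you simply make that explicit by identifying $D_w(z)$ with the matrix of the compression of $\Gamma(z)^*\Gamma(w)$ to $\ker{\Gamma(w)}^\perp$ and observing that non-vanishing of $\delta_w(z)=\det D_w(z)$ forces injectivity, hence invertibility onto the image in finite dimensions. No gaps; this matches the paper's route.
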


\begin{lemma}
   Suppose that $n<\infty$. If $m_+ =n$ then $\bigvee _{z \in \C _+} \Gamma  (z ) \J = \bigvee _{z \in \C _+} \ker{B^* -\ov{z}}$. Similarly if $m_- = n$ then $\bigvee _{z \in \C _-} \Gamma  (z ) \J = \bigvee _{z \in \C _-} \ker{B^* -\ov{z}}$. Consequently if $m_+ = n = m_-$ then the simplicity of $B$ implies that $\bigvee _{z \in \C \sm \R} \Gamma (z) \J = \H$.  \label{lemma:lspan}
\end{lemma}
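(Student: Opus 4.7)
The plan is to prove the two one-sided statements by combining a dimension count on $\Pi_\Ga^\pm$ with a density-plus-continuity argument, and then to derive the final statement from the characterization of simplicity in terms of the deficiency subspaces.

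First, note that the inclusion $\bigvee_{z \in \C_+} \Ga(z)\J \subseteq \bigvee_{z \in \C_+} \ker{B^* - \ov{z}}$ is immediate from the second defining property of a quasi-model, namely $\Ga(z) : \J \to \ker{B^* - \ov z}$. For the reverse inclusion when $m_+ = n$, I would observe that for every $z \in \Pi_\Ga^+$ the map $\Ga(z)$ has rank equal to $n$ (since $m_+ = n$ and $n < \infty$); as $\Ga(z)\J \subseteq \ker{B^*-\ov z}$ and $\dim \ker{B^* - \ov z} = n$ for $z \in \C_+$, the inclusion is an equality, i.e.
\be
\Ga(z)\J \;=\; \ker{B^* - \ov z}, \qquad z \in \Pi_\Ga^+.
\ee
Hence $\bigvee_{z \in \Pi_\Ga^+} \Ga(z)\J = \bigvee_{z \in \Pi_\Ga^+} \ker{B^*-\ov z}$.

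It remains to show $\bigvee_{z \in \Pi_\Ga^+} \ker{B^*-\ov z} = \bigvee_{z \in \C_+} \ker{B^*-\ov z}$. Fix any $w \in \C_+$ and any $f \in \ker{B^*-\ov w}$. By Lemma~\ref{inv}, $\Si_\Ga^+ = \C_+ \sm \Pi_\Ga^+$ is contained in the zero set of a non-identically-zero analytic function on $\C_+$, so we may choose a sequence $\{z_k\} \subset \Pi_\Ga^+$ with $z_k \to w$. Letting $P_{\ov z} := 1 - Q_z$ denote the projection onto $\ker{B^*-\ov z} = \ran{B - z}^\perp$, the norm-continuity estimate established in the proof of the proposition of Section~\ref{symrel} gives $\|P_{\ov z_k} - P_{\ov w}\| \to 0$. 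In particular,
\be
P_{\ov z_k} f \;\longrightarrow\; P_{\ov w} f \;=\; f,
\ee
and since $P_{\ov z_k} f \in \ker{B^*-\ov{z_k}} = \Ga(z_k)\J$, the vector $f$ lies in the closed linear span of $\bigcup_{z \in \Pi_\Ga^+} \Ga(z)\J$. This proves the first claim, and the argument for $m_- = n$ is identical after replacing $\C_+$ by $\C_-$ throughout.

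For the final statement, if $m_+ = n = m_-$, then combining the two equalities just proven yields
\be
\bigvee_{z \in \C \sm \R} \Ga(z)\J \;=\; \bigvee_{z \in \C \sm \R} \ker{B^* - \ov z}.
\ee
The simplicity of $B$ means $\bigcap_{z \in \C \sm \R} \ran{B-z} = \{0\}$; taking orthogonal complements and using $\ker{B^* - z} = \ran{B-\ov z}^\perp$ yields $\bigvee_{z \in \C \sm \R} \ker{B^*-z} = \H$, which is the same as the right-hand side above after reindexing $z \mapsto \ov z$. Hence $\bigvee_{z \in \C \sm \R} \Ga(z)\J = \H$.

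The only delicate point is step two — verifying that $\Pi_\Ga^+$ is dense enough in $\C_+$ to approximate arbitrary points by limits on which the projection-continuity argument applies — but this is exactly the content of the preceding Lemma~\ref{inv}, so the main work is already available.
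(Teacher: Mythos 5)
Your argument is correct, and it reaches the conclusion by a different mechanism than the paper for the key step of passing from $\Pi _\Ga ^+$ to all of $\C _+$. Both proofs start identically: for $z\in\Pi _\Ga ^+$ the finite-dimensional count ($m_+=n$, $\dim{\J}=n=\dim{\ker{B^*-\ov{z}}}$) forces $\Ga (z)\J=\ker{B^*-\ov{z}}$, and Lemma \ref{inv} gives density of $\Pi _\Ga ^+$ in $\C _+$. The paper then dualizes: it takes $f\perp\bigvee _{z\in\C _+}\Ga (z)\J$, brings in a canonical model $\wt{\Ga}$ for $B$ (from the model theory of \cite{AMR}), and notes that the analytic $\J$-valued function $\wt{\Ga}(z)^*f$ vanishes on $\Pi _\Ga ^+$ and hence identically on $\C _+$, so that $f\perp\ker{B^*-\ov{z}}$ for every $z\in\C _+$. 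You instead argue directly on vectors: given $f\in\ker{B^*-\ov{w}}$ you choose $z_k\in\Pi _\Ga ^+$ with $z_k\to w$ and invoke the norm estimate $\|Q_w-Q_{z_k}\|\leq |z_k-w|\left( |\im{w}|^{-1}+|\im{z_k}|^{-1}\right)$ from the proposition of Section \ref{symrel}, so that $P_{\ov{z_k}}f\to f$ with $P_{\ov{z_k}}f\in\Ga (z_k)\J$. This route is more self-contained — it recycles an estimate the paper has already proved instead of appealing to the existence of a canonical model — and it works at the level of vectors rather than orthogonal complements; the trade-off is that it genuinely needs density of $\Pi _\Ga ^+$, whereas the paper's analytic-continuation argument would succeed as soon as $\Pi _\Ga ^+$ has an accumulation point in $\C _+$ (which is all Lemma \ref{inv} is really needed for there). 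Your handling of the final assertion — rewriting simplicity, $\bigcap _{z\in\C\sm\R}\ran{B-z}=\{0\}$, via orthogonal complements of the closed subspaces $\ran{B-z}$ and reindexing $z\mapsto\ov{z}$ — is the same in substance as the paper's appeal to the density of $\bigvee _{z\in\C\sm\R}\ker{B^*-z}$ in $\H$.
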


\begin{proof}
    This is intuitively clear. Since $B$ is simple, $\bigvee _{z \in \C \sm \R} \ker{B^* -z} $ is dense in $\mc{H}$.
By definition if $z \notin \Sigma _\Gamma ^+$ and $m_+ =n$ then $\ker{B^*-\ov{z}} =  \Gamma (z ) \J $. By Lemma \ref{inv} the set $\Pi _\Gamma ^+$ of all $z \in \C _+$ for which $\Gamma (z)$ is invertible is dense in $\C _+$.

If $f \in \H$ and $f \perp \bigvee _ {z \in \C _+}  \Gamma (z) \J $ then $f \perp \ker{B^* -\ov{z}}$ for all $z \in \Pi _\Gamma ^+$. Let $\wt{\Ga}$ be a canonical model for $B$, and let $\wt{f}  (z) :=\wt{\Ga} (z) ^* f$. Since $f \perp \ker{B^* -\ov{z}}$ for all $z \in \Pi _\Gamma ^+$, the $\J$-valued analytic function $\wt{f} (z)$ vanishes everywhere on $\Pi _\Gamma ^+$. Since this set is dense in $\C _+$, $\wt{f}  =0$ identically on $\C _+$. This shows that $f \perp \bigvee _ {z \in \C _+}  \ker{B^* -\ov{z} }$. The same argument in $\C _-$ completes the proof.
\end{proof}

\begin{defn}
    We say that a quasi-model $\Gamma$ is a \emph{generalized} or \emph{non-canonical model} for $B$ if $$ \bigvee _{z \in \C \sm \R} \ran{\Gamma (z)} = \H.$$

    \label{genmodel}
\end{defn}

By Lemma \ref{lemma:lspan}, any full rank quasi-model (a rank $(n,n)$ quasi-model) is a generalized model for $B$. The next proposition verifies that the linear maps $\Ga _A ^w$ defined for $A \in \Ext{B}$ and $w \in \C \sm \R$ in equation (\ref{ncmodel}) satisfy our definition of a quasi-model.

\begin{prop}
If $B \in \sym{n}{\H}$ with $\Theta _B$ inner and $A \in \Ext{B}$, then for any $w \in \C \sm \R$ one can construct a generalized model $\Gamma ^w _A$ for $B$ by defining $\J := \C ^n$, $J_w : \J \rightarrow \ker{B^* -w}$ a bounded isomorphism and letting
$$ \Gamma _A ^w (z) := P _\H U_{w, \ov{z}} J_w.$$ The quasi-model $\Gamma ^w _A$ has rank
$( n, m_+ )$ if $w \in \C _+$ and rank $(m_-, n)$ if $w \in \C _-$ where $0 \leq m_\pm \leq n$.
\end{prop}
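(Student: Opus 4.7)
The plan is to verify the four conditions for $\Gamma_A^w$ to qualify as a generalized model: (i) co-analyticity on $\C \sm \R$, (ii) $\Gamma_A^w(z)\J \subset \ker{B^* - \bar z}$, (iii) the stated rank profile, and (iv) $\bigvee_{z \in \C \sm \R} \ran{\Gamma_A^w(z)} = \H$. Conditions (i)--(iii) are direct structural checks; (iv) will combine (iii) with the inner hypothesis on $\Theta_B$.

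For (i) and (ii), observe that in the densely defined case $U_{w, \bar z} = (A - w)(A - \bar z)^{-1} = I + (\bar z - w)(A - \bar z)^{-1}$, and since $A$ is self-adjoint on $\K$ the resolvent $(A - \bar z)^{-1}$ is analytic in $\bar z$ on $\C \sm \R$, so $\Gamma_A^w$ is co-analytic in $z$. In the exceptional case given by formula (\ref{except}) the denominator factors as $(i + \bar z)(U - b(\bar z))$, which is invertible because $|b(\bar z)| \neq 1$ places $b(\bar z)$ outside $\sigma(U) \subset \T$, and the same analyticity argument applies. Condition (ii) then follows from $J_w \J = \ker{B^* - w} \subset \K_w$ together with $U_{w, \bar z} : \K_w \to \K_{\bar z} = (\K \ominus \H) \oplus \ker{B^* - \bar z}$, so projecting onto $\H$ lands in $\ker{B^* - \bar z}$.

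For (iii), evaluate at the distinguished point $z = \bar w$: then $\bar z = w$ and both formulas collapse to $U_{w, w} = I$, giving $\Gamma_A^w(\bar w) = P_\H J_w = J_w$ (as $\ran{J_w} \subset \H$), which is an isomorphism and so has rank $n$. When $w \in \C_+$ this places a rank-$n$ point in $\C_-$, forcing $m_- = n$ and rank $(n, m_+)$; the case $w \in \C_-$ is mirror-symmetric and gives $m_+ = n$. For (iv), assume without loss of generality that $w \in \C_+$; then $m_- = n$, so Lemma~\ref{lemma:lspan} applied in the lower half-plane gives $\bigvee_{z \in \C_-} \ran{\Gamma_A^w(z)} = \bigvee_{z \in \C_-} \ker{B^* - \bar z} = \bigvee_{\zeta \in \C_+} \ker{B^* - \zeta}$. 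Since $\Theta_B$ is inner, $B \simeq Z_{\Theta_B}$ on the model space $K^2_{\Theta_B}$, whose deficiency subspaces at points of either open half-plane are spanned by reproducing-kernel-type vectors of $K^2_{\Theta_B}$; by the standard fact that such kernels at points of a single half-plane span the whole model space, one concludes $\bigvee_{\zeta \in \C_+} \ker{B^* - \zeta} = \H$. The main subtle point is precisely this last half-plane spanning, which leans on the inner hypothesis; a more general argument would need to replace it with a simplicity-based span lemma.
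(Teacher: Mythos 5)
Your proposal is correct and follows essentially the same route as the paper: the quasi-model conditions are checked exactly as in the paper's proof, the rank statement comes from evaluating at $\ov{w}$ to get $\Gamma_A^w(\ov{w}) = J_w$ (the paper's note immediately after the proposition), and the generalized-model property is obtained via Lemma \ref{lemma:lspan} together with the half-plane spanning of deficiency subspaces in the model space $K^2_{\Theta_B}$, which is precisely the argument of Remark \ref{genex}. The only difference is organizational: the paper distributes these three pieces among the proof, the following note, and Remark \ref{genex}, while you consolidate them into one argument.
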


We will usually assume that $J_w$ is chosen to be an isometry. Recall that if $A$ is such that $A = b^{-1} (U)$ and $1 \notin \sigma _p (U)$, then $U_{wz} = P _\H (A-w) (A-\ov{z} ) ^{-1}$, as in equation (\ref{normal}).
In the exceptional case where $1 \in \sigma _p (U)$, $U_{wz}$ is given by equation (\ref{except}).

\begin{proof}
    First, clearly $\Gamma _A ^w $ is anti-analytic on $\C \sm \R$. Also as discussed previously, $\Gamma _A ^w  (\ov{z}) \in \ker{B^* -z}$ since
$U_{w,z} $ maps $\ker{B ^* -w}$ into $(\K \ominus \H ) \oplus \ker{ B_A ^* -z}$ (as discussed at the beginning of this section).
\end{proof}

Note that by construction $\Gamma ^w _A (\ov{w}) = J_w$, which is invertible by assumption.

Given any $A \in \Ext{B}$, we are free to choose $w \in \C \sm \R $ in the construction of a quasi-model $\Gamma _A ^w $ associated with $A$. For the remainder of this paper we will choose $w =-i$ unless otherwise specified
and define
$$ \Gamma _A (z) := \Gamma _A ^{-i} (z), $$ which (excluding the exceptional case) is equal to
$$ P_\H (A+i) (A-\ov{z} ) ^{-1}  J_{-i}, $$ and $\Gamma _A (i) = J_{-i} $.
We will also simply write $J$ for $J_{-i}$ where $J = P_{-i} J : \C ^n \rightarrow \ker{B^* +i}$, and usually we assume $J$ is an isometry.

\begin{remark}
Suppose that the characteristic function $\Theta $ of $B$ is inner.  If this is the case then we will show that for any $A \in \Ext{B}$, that $\bigvee _{z \in \C _-} \ran{ \Gamma _A ^{w}  (\ov{z})}  = \H$ for any $w \in \C_+$ and $\bigvee _{z \in \C _+} \ran{ \Gamma _A ^{w}  (\ov{z})}  = \H$ whenever $w \in \C _-$.

To see this note that in this case that $B$ is unitarily equivalent to $Z_\Theta$, which acts as multiplication by $z$ in some model space $K ^2 _\Theta$. Suppose that $U : \H \rightarrow K^2 _\Theta$ is this unitary transformation such that $U ^* Z _\Theta U = B$. Let $C_\Theta = \dag \circ \Theta ^*$ be the canonical anti-linear isometry from $K^2 _\Theta$ onto $K^2 _{\Theta ^T}$, where $T$ denotes transpose, as defined in \cite[Claim 3]{Martin-semi}. The existence of $C_\Theta$ also follows from our Corollary \ref{conjugation}.

Suppose that $w \in \C _+$. Then by Lemma \ref{lemma:lspan}, since $m_- = n$ for any $\Gamma _A ^{w}$ (because $\Gamma _A ^w (\ov{w})$ is invertible),
\ba \bigvee _{z \in \C _-} \ran{\Gamma ^w _A (z)} & = & \bigvee _{z \in \C _+} \ker{B^* -z} \nonumber \\
& =&  U^* \bigvee _{z \in \C _+} \ker{Z_\Theta ^* -z } \nonumber  \\
& =& U^* \bigvee \{ C_\Theta k_z ^{\Theta ^T} \} \nonumber  \\
& =& U^* K^2 _\Theta = \H . \nonumber \ea
Similarly if $w \in \C _-$ then
\ba  \bigvee _{z \in \C _+ } \ran{\Gamma _A ^w (z)} & = & U^* \bigvee _{z \in \C _- } \ker{Z_\Theta ^* -z } \nonumber \\
& = & U^* \bigvee \{  k_z ^{\Theta } \} \nonumber \\
& =& U^* K^2 _\Theta = \H . \nonumber \ea

This proves that if $\Theta _B$ is inner, then every quasi-model $\Gamma _A ^w$ for
$A \in \Ext{B}$ and $w \in \C \sm \R$ is a generalized model.

\label{genex}
\end{remark}

\begin{eg}{ (An example of $Z_A$ with indices $(n, n)$ where $m_+ = m < n$.)}

Suppose that $B \in \sym{n}{\H}$ and $\Theta _B$ is inner so that for any $A \in \Ext{B}$ and $w \in \C \sm \R$, $\Gamma _A ^w$ is a generalized model for $B$ (see Remark \ref{genex} above).

Let $V:=$ the partial isometric extension of $b(B)$ to $\H$. Given any $C \in \ov{B_1 (\C ^{n\times n})}$ define
$$\hat{C} := \sum _{jk} C_{jk} \ip{\cdot}{u_j}{v_k}, $$ where $\{ u_j \}$ is an orthonormal basis of $\ker{V} = \ker{B^* -i}$ and
$\{ v_k \}$ an orthonormal basis of $\ran{V} ^\perp = \ker{B^* +i}$. Let $$ V (C) := V + \hat{C},$$ a contractive extension of $V$ and let $( U _C , \K )$ be the minimal unitary dilation of $V (C)$.  Choose $C = \bm{1} _m$ where $0 \leq m < n$ so that
$\hat{C} u_j = 0$ for any $n \geq j > m$. Let us assume that $V(C)$ does not have $1$ as an eigenvalue. This is the case, for example, if $B$ is densely defined (see \emph{e.g} \cite[Lemma 6.1.3]{Martin-uni}). Then it follows from \cite[Proposition 6.1, Chapter 2]{NF}, that $1$ is not an eigenvalue of $U$ so that $b^{-1} (U _C ) =: A _C \in \Ext{B}$ and $U _C = b(A_C)$. Define
$$ \Gamma _C (z) := \Gamma _{A _C}  ^i (z) = P _\H (A_C -i) (A_C -\ov{z}) ^{-1} J_i ,$$ where $J_i : \C ^n \rightarrow \ker{B^* -i}$ is chosen to be an isometry such that $J_i e_k = u_k$, where $\{e_k \}$ is the standard orthonormal basis of $\C ^n$. Now
$$ \Gamma _C   (i) = P_\H (A_C -i) (A_C +i) ^{-1} J_i = P_\H b(A_C ) P_\H J_i = V (C) J_i = \hat{C}, $$ since $b(A_C) =U_C$ is an extension of $V(C)$. It follows that $$\Gamma _C (i) e_j =0,$$ for any $n\geq j > m$. Now given any $z \in \C _+$,
\ba \Gamma _C (z) & = & P_\H (A_C -i) (A_C -\ov{z} )^{-1} J_i \nonumber \\
& =& P_\H (A_C +i ) (A_C -\ov{z} ) ^{-1} (A_C -i) (A_C -\ov{i} ) ^{-1} P_\H J_i . \ea Since both $z , i \in \C _+$, by dilation theory this is just equal to
$$ \Gamma _C (z) = P_\H (A_C +i)(A_C -\ov{z} ) ^{-1} P_\H b(A _C) P_\H J_i, $$ so that $\Gamma _C (z) e_j =0$ for any $n\geq j > m$ as before. To see this note that since $U_C = b(A_C)$ is a dilation of
$V(C)$, that for any $n \in \N \cup \{0 \}$, $$ P_\H b(A_C) ^n P _\H = (P_\H b(A_C) P _\H) ^n $$ It follows that $$ P _\H (A_C +i ) ^{-n} P_\H = ( P_\H (A_C +i ) ^{-1} P_\H ) ^n. $$ Given any $z \in \C _-$
that lies in the open ball of radius $1$ about $z = -i$ we have that $(A_C -z ) ^{-1}$ can be expressed as a power series in $(A_C +i ) ^{-1}$, and it follows from this that the resolvent formula:
\ba  (z-w) P_\H (A_C -z ) ^{-1} (A_C -w ) ^{-1} P _\H & = & P_\H (A_C -w) ^{-1} P _\H  - P _\H (A-z) ^{-1} P_\H \nonumber \\
& = & (z-w) P_\H (A -z ) ^{-1} P_\H (A-w ) ^{-1} P _\H, \nonumber  \ea holds for all $z, w \in \C _-$.

 Hence $\Gamma  (z) u_j$ is identically zero in $\C _+$ for any $n \geq j >n$  so that
$$ m _+ = \max _{z \in \C _+} \ker{\Gamma (z) } ^\perp = m.$$

Similarly using $\Gamma  = \Gamma _{A _C} ^{-i}$ instead, one can construct an example of $Z_A $ with indices $(n,n)$ where $n > m_-$.
\end{eg}

\section{Construction of the model reproducing kernel Hilbert space}

Given any quasi-model $\Gamma$ for $B \in \sym{n}{\H}$, we can construct a reproducing kernel Hilbert space $\H _\Gamma$ as follows:

\begin{defn}
For $f \in \H$ define
$$ \hat{f} (z) :=  \Gamma ^* (z) f, $$ an analytic function on $\C \sm \R$, and let $\H _\Gamma :=$ the vector space of
all the functions $\hat{f} $.

Let $\H _\Gamma ^{-1}  = \bigvee _{z\in \C \sm \R } \ran{\Gamma (z)}$, $\H _\Gamma ^{-1} \subset \H$.  Then clearly $\H _\Gamma$ is the set of all
functions $\hat{f} $ for $f \in \H _\Gamma ^{-1}$, and if $f \perp \H _\Gamma ^{-1}$ then $\hat{f} =0$. We define an inner product on $\H _\Gamma$ by
$$ \ip{\hat{f}}{\hat{g}} _{\Gamma }  := \ip{f}{g} ,$$ whenever $f,g \in \H _\Gamma ^{-1}$.

According to Definition \ref{genmodel}, we call $\Gamma$ a generalized model if $\H _{\Gamma} ^{-1} = \H$.
\end{defn}

We say that the reproducing kernel Hilbert space $\H _\Ga$ has the division property in $\C _\pm$ if whenever $\hat{f} \in \H _\Ga$ and
$\hat{f} (w) = 0$ for $w \in \Pi _\Ga ^\pm$ we have that $$\frac{\hat{f} (z )}{z-w} \in \H _\Ga.$$

\begin{prop}
    With the above inner product $\H _\Gamma$ is a reproducing kernel Hilbert space of analytic functions on $\C \sm \R$ with reproducing kernel
$$ k_w ^\Gamma (z) = \Gamma (z) ^* \Gamma (w), $$ and point evaluation vectors $$ k_w ^\Gamma j = U_\Gamma \Gamma (w) j ,$$ for $j \in \J$. If the rank of
$\Ga$ is $(m_+ , m_- )$ then $\H _\Ga$ has the division property in $\C _\pm$ whenever $m_\pm =n$.

The map $U _\Gamma : \H \rightarrow \H _\Gamma$ defined by $U_\Gamma f = \hat{f}$ is a co-isometry
with initial space $\H _\Gamma ^{-1}$, and is unitary if and only if $\Gamma$ is a generalized model for $B$. If $\Gamma$ is a generalized model then
$Z _\Gamma := U _\Gamma B U_\Gamma ^{-1}$ acts as multiplication by $z$ on the domain $U_\Ga \dom{B}$,
and if either $m_+$ or $m_-$ is equal to $n$ then
$$ \dom{Z_\Gamma } = \{ \hat{f} | \ z \hat{f} (z) \in \H _\Gamma \}. $$

\label{modelspace}
\end{prop}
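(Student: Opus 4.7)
The plan is to derive everything from a single intertwining identity
$$ \Gamma(z)^* B f = z\,\Gamma(z)^* f, \qquad f \in \dom{B},\ z \in \C \sm \R. $$
This follows immediately from the defining condition $\Gamma(z) j \in \ker{B^* - \ov{z}}$: the pair $(\Gamma(z) j,\, \ov{z}\,\Gamma(z) j)$ lies in the adjoint relation $B^*$, so equation (\ref{adj}) gives $\ip{Bf}{\Gamma(z) j} = z \ip{f}{\Gamma(z) j}$ for every $f \in \dom{B}$ and $j \in \J$, which rearranges to the identity above. For the Hilbert space structure on $\H_\Gamma$, note that $\hat f \equiv 0$ iff $f \perp \H_\Gamma^{-1}$, so $f \mapsto \hat f$ factors through the orthogonal projection onto $\H_\Gamma^{-1}$ followed by an isometric bijection $\H_\Gamma^{-1} \to \H_\Gamma$; this makes $\H_\Gamma$ Hilbert and $U_\Gamma$ a partial isometry with initial space $\H_\Gamma^{-1}$ and final space $\H_\Gamma$, hence a co-isometry onto $\H_\Gamma$, and unitary precisely when $\H_\Gamma^{-1} = \H$. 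The identity
$$ \ip{\hat f}{U_\Gamma \Gamma(w) j}_{\Gamma} = \ip{f}{\Gamma(w) j}_{\H} = \ip{\Gamma(w)^* f}{j}_{\J} = \ip{\hat f(w)}{j}_{\J} $$
then identifies $k_w^\Gamma(z) = \Gamma(z)^* \Gamma(w)$ as the reproducing kernel with evaluation vectors $k_w^\Gamma j = U_\Gamma \Gamma(w) j$.

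For the division property in $\C_+$ (the $\C_-$ case being symmetric), assume $m_+ = n$, fix $w \in \Pi_\Gamma^+$, and suppose $\hat f(w) = 0$; replacing $f$ by its projection onto $\H_\Gamma^{-1}$ if necessary does not alter $\hat f$, so I may take $f \in \H_\Gamma^{-1}$. Then $f \perp \ran{\Gamma(w)} = \ker{B^* - \ov{w}} = \ran{B - w}^\perp$, where the first equality uses $m_+ = n$ and $w \in \Pi_\Gamma^+$; hence $f = (B - w) g$ for some $g \in \dom{B}$, and the intertwining identity gives
$$ \hat f(z) = \Gamma(z)^*(B - w) g = (z - w)\, \hat g(z), $$
so $\hat f(z)/(z - w) = \hat g(z) \in \H_\Gamma$.

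When $\Gamma$ is a generalized model, applying the intertwining identity to $f \in \dom{B}$ gives $\widehat{Bf}(z) = z\, \hat f(z)$, so $Z_\Gamma = U_\Gamma B U_\Gamma^{-1}$ acts as multiplication by $z$ on $U_\Gamma \dom{B}$, yielding $U_\Gamma \dom{B} \subseteq \{\hat f : z \hat f \in \H_\Gamma\}$. For the reverse inclusion, assume $m_+ = n$ and suppose $\hat f \in \H_\Gamma$ satisfies $z\, \hat f(z) = \hat h(z)$ for some $h \in \H$. Fixing a base point $z_0 \in \Pi_\Gamma^+$, one has $\Gamma(z_0)^*(h - z_0 f) = 0$, so $h - z_0 f \in \ran{B - z_0}$ by the rank-$n$ identification $\ran{\Gamma(z_0)} = \ker{B^* - \ov{z_0}}$; write $h - z_0 f = (B - z_0) g$ with $g \in \dom{B}$. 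Applying the intertwining identity to $Bg - z_0 g = h - z_0 f$ produces $(z - z_0)\, \hat g(z) = \hat h(z) - z_0 \hat f(z) = (z - z_0)\, \hat f(z)$, so $\hat g = \hat f$ in $\H_\Gamma$, and unitarity of $U_\Gamma$ forces $g = f$; hence $f \in \dom{B}$ and $Bf = h$. The main obstacle is precisely this last step: the inversion $\Gamma(z_0)^*(h - z_0 f) = 0 \Rightarrow h - z_0 f \in \ran{B - z_0}$ requires the rank hypothesis $m_\pm = n$, since without it the strict containment $\ran{\Gamma(z_0)} \subsetneq \ker{B^* - \ov{z_0}}$ breaks the argument and the domain characterization genuinely fails.
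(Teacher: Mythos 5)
Your proposal is correct and follows essentially the same route as the paper's proof: the same verification that $\hat f \equiv 0$ iff $f \perp \H_\Gamma^{-1}$, the same kernel identity giving $k_w^\Gamma(z) = \Gamma(z)^*\Gamma(w)$ with evaluation vectors $U_\Gamma\Gamma(w)j$, the same division-property argument via surjectivity of $\Gamma(w)$ onto $\ker{B^*-\ov{w}}$ for $w \in \Pi_\Gamma^+$, and the same base-point trick for the reverse inclusion in the domain characterization. The only (cosmetic) difference is that you package the multiplication-by-$z$ computation as a single intertwining identity derived from the adjoint linear relation, which is marginally more careful than the paper's inline use of $B^*\Gamma(z)j$ when $B$ is not densely defined.
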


Recall that if $\Theta _B$ is inner then given any $A \in \Ext{B}$, and $w\in \C \sm \R$, any
quasi-model $\Ga _A ^w$ is a generalized model with indices $(m_+ , n)$ or $(n , m_-)$.

\begin{proof}
This is all fairly straightforward to check. First of all one should verify that $\| \hat{f} \| _\Gamma = 0$ implies that $f \perp \H _\Gamma ^{-1}$,
\emph{i.e.} that $\hat{f} (z) = \Gamma (z) ^* f =0$ for all $z \in \C \sm \R $.
Indeed $\Gamma (z) ^* f  =0$ for $z \in \C \sm \R$ if and only if $\ip{f}{\Gamma (z) j } = 0 $ for all $z \in \C \sm \R$, and $j \in \J$ which happens if and only if
$f \perp \ran{\Gamma (z)}$ for all $z \in \C \sm \R$, in other words $f \perp \H _\Gamma ^{-1}$.

Now given any $j \in \J$ and $f \in \H _\Gamma ^{-1}$,
\be \ip{\hat{f} (w)}{j} _\J = \ip{f}{\Gamma (w) j} _\H = \ip{\hat{f}}{U_\Ga \Gamma  (w) j} _{\Gamma} ,\ee and it follows from this that for any $j \in J$,
$k_w j := U_\Ga \Gamma (w) j$ are reproducing kernel vectors in $\H _\Ga$ and the reproducing kernel is given by
\ba  \ip{k_w (z) j_1 }{j_2} _\J & = &\ip{k_w j_1}{k_z j_2} _\Ga = \ip{\Ga (w) j_1}{\Ga (z) j_2} _\H \nonumber \\
& = & \ip{\Ga (z) ^* \Ga (w) j_1}{j_2 } _\J. \nonumber \ea

Now suppose $U_\Gamma$ is unitary and define $Z_\Gamma := U_\Gamma B U_{\Gamma} ^{-1}$ on $U_\Gamma \dom{B}$. Let us first show that $Z_\Gamma$ acts as multiplication by $z$ on its domain.
If $f \in \dom{B}$ then $$ Z_\Gamma \hat{f} = U_\Ga Bf $$ so that for any $j \in \J$,
\ba \ip{ (U_\Ga Bf) (z) }{j} _\J & =& \ip{Bf}{\Gamma (z)j} _\H  = \ip{f}{B^* \Gamma (z) j} \\
& =& z \ip{f}{\Gamma (z) j}  = \ip{z \hat{f} (z)}{j} _\J ,\ea showing that $Z_\Gamma \hat{f} (z) = z \hat{f} (z)$.

Now suppose that $m_+ = n$, and let's prove that $\H _\Ga$ has the division property in $\C _+$. In this case if $\hat{f} \in \H _\Ga$ and
$\hat{f} (w) = 0$ then
$$ 0 = \ip{\hat{f}(w)}{ j } = \ip{f}{\Ga (w) j}, $$ for any $j \in J$. If $w \in \Pi _\Ga ^+$, then $\Ga (w) : \J \rightarrow \ker{B^* -\ov{w} }$ is onto
which implies that $f \in \ran{B-w}$. Then $f =(B-w) g$, and $\hat{f} = (Z_\Gamma -w ) \hat{g}$, or
$$ \hat{g} (z) = \frac{\hat{f} (z)}{z-w}. $$

It remains to prove that if (without loss of generality) $m_+ =n$ and $\hat{f} \in \H _\Gamma$ is such that $z \hat{f} (z) \in \H _\Gamma$ then $\hat{f} \in \dom{Z_\Gamma}$. If $\hat{f}$ and $z \hat{f} \in \H _\Gamma$
then so is $(z-w) \hat{f} =: \hat{g} $ for any fixed $w \in \Pi _\Gamma ^+$, and some $g \in \H$. Since $\hat{g} $ vanishes at $w$, it follows that $\Gamma ^* (w) g = 0$, so that
$g \perp \ran{\Gamma (w)} = \ker{B^* -\ov{w}}$ since $w \in \Pi _\Gamma$. It follows that $g = (B-w) h$ for some $h \in \H$ so that $\hat{g} (z) = (z-w) \hat{h} (z)$ for any
$z \in \C \sm \R$. But since $\hat{g} (z) = (z-w) \hat{f} (z)$ it follows that $\hat{f} = \hat{h}$ so that $f=h \in \dom{B}$.
\end{proof}

\subsection{Alternate formulas for the Livsic characteristic function}
\label{alternate}

In this subsection we pause to compute an alternate formula for the Livsic characteristic function. This will be useful, in particular, for computing formulas for the reproducing kernel of $\H _\Ga$ in the next
subsection.

Suppose that $B \in \sym{n}{\H}$ where $n< \infty$. As mentioned in the introduction the Livsic characteristic function of $B$ is usually defined using
$$ \{ u_k \} _{k=1} ^n  \quad  \mbox{orthonormal basis of} \  \ker{B^* -i} $$
$$ \{ v_k \} _{k=1} ^n  \quad  \mbox{orthonormal basis of} \  \ker{B^* +i} $$
$$ \{ w_k (z) \} _{k=1} ^n  \quad  \mbox{arbitrary basis of} \ \ker{B^* -z} $$
and
$$ A(z) := \left[ \ip{w_j (z)}{v_k} \right] \quad \quad \quad B(z) := \left[ \ip{w_j (z)}{u_k} \right] ,$$
by $$ \Theta _B (z) = b(z) B(z) ^{-1} A(z).$$

Here is an alternate formula that is sometimes useful. Let $A$ be a canonical self-adjoint extension of $B$ and let $$ w_j (z) := \Gamma _A ^i (\ov{z} ) e_j \in \ker{B^* -z}, $$ and
choose $v_j := \Gamma _A ^i (i) e_j = (A-i) (A+i) ^{-1} u_j$, where recall we choose $J_i : \C ^n \rightarrow \ker{B^* -i}$ so that $J_i e_j = u_j$, $\{ e_j \}$ is the standard orthonormal basis of $\C ^n$, and $J_i$ is an isometry.

Then it follows that
\ba \ip{w_j (z)}{v_k} & = & \ip{ (A-i)(A-z) ^{-1} u_j }{(A-i) (A+i) ^{-1} u_k } \nonumber \\
& =& \ip{ u_j}{ (A-i) (A-\ov{z}) ^{-1} u_k } = \ip{u_j}{ w_k (\ov{z})}.\ea
This shows that $$ A(z) = \left[ \ip{u_j}{ w_k (\ov{z} ) } \right], $$ and a similar calculation shows that
$$ B(z) = \left[ \ip{v_j}{w_k (\ov{z})} \right].$$ It follows that $A(\ov{z}) ^* = B(z)$.

\subsection{Reproducing Kernel formulas for $\H _\Gamma$}
\label{RKform}

Let $\Gamma$ be a generalized model for $B$ of rank $(m_+ , m_-)$ where at least one of $m_\pm$ is equal to $n$. Then by Proposition \ref{modelspace} we have an isometry $U_\Gamma :\H \rightarrow \H _\Gamma$ such that $$ U_\Gamma B = Z _\Gamma U_\Gamma, $$ so that $Z_\Ga$ is unitarily equivalent to $B$.

For any $w \in \C \sm \R$ let $P_w $ be the projection onto $\ran{B-w} = \ker{B^* -\ov{w} } ^\perp$, and let $Q_w := U_\Gamma P_w U_\Gamma ^*$, the projection onto $\ran{Z_\Gamma -w}$. Now define
$$ L_w := U_\Gamma b_{\ov{w}} (B) P _w  U_\Gamma ^* = b_{\ov{w} } (Z_\Gamma ) Q_w, $$
the partial isometric extension of $b_{\ov{w}} (Z_\Gamma ) $ to all of $\H _\Gamma$. It is clear that
$$ L_w = Q_{\ov{w}} b_{\ov{w}} (Z_\Gamma ) Q_w , $$ and that $L_w ^* = L _{\ov{w}}$.

We can now calculate formulas for the reproducing kernel of $\H _\Gamma$, using the same procedure as in \cite[Section 4]{AMR}. Let $k_w (z) = k_{w} ^\Gamma (z)  $ denote the reproducing kernel of $\H _\Gamma$.
Now given any $u, v \in \J$ and $\alpha \in \C \sm \R$,
\ba \ip{ (L_\alpha ^* k_w ) (z) u}{v} & = &\ip{L_\alpha ^*  k_w u}{k_z v} \nonumber \\
& = & \ip{k_w u}{L_\alpha k_z v } = \ov{ \ip{b_{\ov{\alpha}} (Z _\Gamma ) Q_\alpha k_z v}{k_w u}} \nonumber \\
& =& \ov{b_{\ov{\alpha}} (w)} \ip{k_w u}{Q_\alpha k_z v} \nonumber \\
& = & \frac{1}{\ov{b_\alpha (w)}} \left( \ip{k_w (z) u}{v} - \ip{ \left( (1 - Q_\alpha) k_w \right) (z) u }{v} \right). \ea

But also,
\ba \ip{ (L _\alpha ^* k_w ) (z) u}{v} & = & \ip{(L_{\ov{\alpha}} k_w ) (z) u}{v} = b_\alpha (z) \ip{Q_{\ov{\alpha}} k_w u}{k_z v} \nonumber \\
& =& b_\alpha (z) \left( \ip{k_w (z) u}{v} - \ip{ \left( (1-Q_{\ov{\alpha}}) k_w \right) (z) u }{v} \right).\ea

Solving for $\ip{k_w (z) u}{v}$ and using that $u,v \in \J$ were arbitrary yields:

\be k_w ^\Gamma (z) = \frac{\left( (1-Q_\alpha) k_w \right) (z) - b_\alpha (z) \ov{b_\alpha (w)} \left( (1-Q_{\ov{\alpha}} ) k_w \right) (z)}{1-b_\alpha (z) \ov{b_\alpha (w)}}, \label{arbkern} \ee
for any $\alpha \in \C \sm \R$.

Now suppose $\Gamma$ is a rank $(m_-, n)$ quasi-model for $B$ and that $Z_\Ga $ is unitarily equivalent to $B$.  This happens for example if
$\Ga = \Ga _A$ for some $A \in \Ext{B}$. Also choose $\J := \C ^n$, and $J: \C ^n \rightarrow \ker{B^* +i}$ to be an isometry, and $\alpha = i$ in equation (\ref{arbkern}).

Let $\{ u_k \}$ be an orthonormal basis for $\ker{B^* -i}$ such that $\{ u_k \} _{k=1} ^{n_+}$ is a basis for $\ran{\Ga (-i)}$, and let $\{ v_k \} _{k=1} ^{n}$ be an orthonormal basis for $\ker{B^*+i}$ such that $v_k = J e_k$, and $\{ e_k \}$ is an orthonormal basis of $\C ^n$.  We assume here that $J$ is an isometry.

Using that $1-Q_{-i} = \sum _{l=1} ^{n} \ip{\cdot}{\hat{u} _l} \hat{u} _l $ we can compute:

\ba \ip{ \left( (1-Q_{-i}) k_w \right) (z) e_j }{e_k} & = & \sum _{l=1} ^{n_+} \ip{k_w e_j}{\hat{u} _l} \ip{\hat{u} _l}{k_z e_k } _\Gamma \nonumber \\
& = & \sum _{l=1} ^{n_+} \ip{ U_\Gamma \Gamma (w) e_j }{U_\Gamma u_l } \ip{ U_\Gamma u _l }{U_\Gamma \Gamma (z) e_k } \nonumber \\
& =& \sum _{l=1} ^{n_+} \ip{\Gamma (w) e_j}{u_l}\ip{u_l}{\Gamma (z) e_k}. \nonumber \ea

Let $w_j (\ov{w}) := \Gamma (w) e_j \in \ker{B^* -\ov{w} }$, and define the $n\times n$ matrix
\be \alpha (z) := \left[ \ip{u_j}{w_k (\ov{z} )} \right]. \ee

Hence the above can be written:
$$ \ip{ \left( (1-Q_{-i}) k_w \right) (z) e_j }{e_k} =  \sum _{l=1} ^{n} \ip{w_j (\ov{w}) }{u_l}\ip{u_l}{w_k (\ov{z})}.$$

Compare this to

\ba \left( \alpha (z)  \alpha (w) ^* e_j , e_k \right)  & = & \sum _{l=1} ^{n} \left( e_j , \alpha (w) e_l \right) \left( \alpha (z) e_l , e_k \right) \nonumber \\
& =& \sum _{l=1} ^{n} \ip{ w_j (\ov{w}) }{u_l} \ip{u_l}{w_k (\ov{z} )}. \nonumber \ea
This proves that \be \left( (1- Q _{-i} ) k_w (z) \right) = \alpha (z) \alpha (w) ^*. \ee

A similar calculation shows that since $(1- Q _i )  = \sum _{l=1} ^{n} \ip{\cdot}{\hat{v_l}} \hat{v} _l $ we get that
$$ \ip{\left( (1-Q_i) k_w \right) (z) e_j}{e_k} = \sum _{l=1} ^n \ip{w_j (\ov{w})}{v_l}{v_l}{w_k (\ov{z})}.$$
If we take $\beta (z) := \left[\ip{v_l}{w_k (\ov{z})} \right]$
then as before it is not hard to check that
$$ \left( \beta (z) \beta (w) ^* e_j , e_k \right) = \sum _{l=1} ^{n} \ip{w_j (\ov{w}) }{v_l} \ip{v_l}{w_k (\ov{z} )} ,$$
which shows that
\be \left( (1-Q_i ) k_w \right) (z) = \beta (z) \beta (w) ^*. \ee

It follows that our formula for the reproducing kernel in $\H _\Gamma$ can be written:

\be k_w ^\Ga (z)  = \frac{ \beta (z) \beta (w) ^* - b(z) \ov{b(w)} \alpha (z)  \alpha (w) ^* }{ 1- b(z) \ov{b(w)}} \label{modelkern}. \ee

Now in the case where both $z,w \in \Pi _\Ga $ (in particular for $\Gamma _A$ we have that $\Pi _A ^+ $ is dense in $\C _+$)  we have that $\{ w_j (\ov{z} ) \}$ and $\{w_j (\ov{w}) \}$ are bases for $\ker{B^* -\ov{z} }$ and $\ker{B^* -\ov{w} }$ respectively, so that for such $z, w$ we have $\beta = B$ and $\alpha =A$, where $A, B$ are the matrices in the definition of $\Theta _B$ (see Subsection \ref{alternate}),
$$ \Theta _B (z) = b(z) B(z) ^{-1} A(z).$$

Hence for any $z,w \in \Pi _\Ga$
\be k_w ^\Gamma (z) = B(z) \left( \frac{\mathbb{1} -\Theta _B (z) \Theta _B (w) ^* }{1-b(z) \ov{b(w)}} \right) B(w) ^*. \label{rkgamma} \ee
Also observe that by the formula (\ref{modelkern}) we have that
$$ k_i ^\Gamma (z) = B(z)  B(i) ^* = B(z)  ,$$ since $B(i) = [ \ip{ J e_l }{ \Ga (i) e_k } ] = [ \ip{ J e_l }{J e_k } ] = \bm{1} $. Also we have that
$k_{i} (z) = \Gamma ^* (z) \Gamma (i)  = \Gamma (z )^* J $ so that $B(z) = \Ga (z) ^* J$.

\section{Cyclicity}

\label{cyclicity}

The goal of this section is to show that the characteristic function $\Theta _ B$ of $B$ is inner implies that $\ker{B^* -w}$ is cyclic for any
$A \in \Ext{B}$. This will enable us, in the subsequent section, to extend the isometry $U_A : \H \rightarrow \H _A$ to an isometry $V_A : \K \rightarrow \K _A$ where $A$ is self-adjoint in $\K$, $\K _A \supset \H _A$ is a larger reproducing kernel Hilbert space on $\C \sm \R $ containing $\H _A$,
and $V_A | _{\H _A } = U_A$. This larger space $\K _A$ contains information about the extension $A \in \Ext{B}$ that will be key for our characterization of $\Ext{B}$.

Here we say that a subspace $S \subset \H$ is cyclic for $A \in \Ext{B}$ if
$$\bigvee \mr{vN} ( A) S = \H, $$ where $\mr{vN} (A) $ is the von Neumann algebra generated by the unitary operator $b(A)$.

It will be convenient to apply some of the dilation theory for contractions as developed in \cite{NF}. The tools we are going to use are described below:

Given a contraction $T \in B (\H )$, recall that the defect
indices of $T$ are defined to be the pair of positive integers $(\mf{d} _T , \mf{d} _{T^*} )$ where $$ \mf{d} _T := \mr{dim} \left( \ov{\ran{ 1 - T^* T} }  \right). $$
Namely $\mf{d} _T := \mr{dim} \left( \mf{D} _T \right)$ where $$ \mf{D}_T := \ran{D_T = \sqrt{1-T^*T}}.$$
Given $B \in \sym{1}{\H}$, we will be studying the contraction $$ V:= b_w (B) Q_w $$ where $Q_w$ is the projection onto $\ran{B -\ov{w}} = \ker{B^* -w} ^\perp$ for some fixed $w \in \C \sm \R$ and $b _w(B)$ is the $w-$Cayley
transform of $B$, $$b _w (z) = \frac{z-w}{z-\ov{w}}.$$ This is a partial isometry, and it is clear that the defect indices of $V$ are equal to the deficiency indices of $b _w (B)$, namely
$(n,n)$. A contraction is called c.n.u. (completely non-unitary) if it has no non-trivial unitary restriction.  It is clear that since $B$ is simple, this implies that $V$
is c.n.u.  The model theory of Nagy-Foias \cite{NF} associates a contractive operator-valued function $\Theta _T$  called the Nagy-Foias characteristic function of $T$, to any
c.n.u. contraction $T$.  This function is defined by $$ \Theta _T (z) := (-T + zD_{T^*} (1-zT^*) ^{-1} D_T ) | _{\mf{D} _T}.$$ In our case where $T=V$ is a partial isometry, this expression
simplifies to: $$ \Theta _V (z) = z P_- (1-zV^*) ^{-1} P_+ ,$$ where $P_+, P_-$ are the projectors onto $\mf{D} _V = \ker{B^*-w}$ and $\mf{D} _{V^*} = \ker{B^* -\ov{w}}$ respectively. Since
$V$ is a partial isometry, in the case where $w=i$, the Nagy-Foias characteristic function $\Theta _V$ of $V = b _i (B) Q_w$ coincides with the Livsic characteristic
function, $$ \theta _{b _i (B)} := \Theta _B \circ b _i ^{-1}, $$ of the isometric linear transformation $b _i (B)$, as shown for example in \cite[Section 6]{Martin-uni}.

Now recall that any contraction $T$ acting on $\H$ has a minimal unitary dilation $U$ acting on some larger Hilbert space $\K \supset \H$. Recall that a unitary $U$ on $\K \supset \H$
is called a unitary dilation of $T$ if for any $n \in \mathbb{N} \cup \{0 \}$ we have that $$ T^n = P_\H U^n | _\H.$$ Such a dilation is called minimal if $\K$
is the smallest reducing subspace for $U$ containing $\H$, and the minimal unitary dilation of $T$ is unique up to a unitary transformation that fixes the Hilbert space $\H$
\cite[Theorem 4.3]{Paulsen}. Since our contraction $V = b _w (B) Q_w$ is c.n.u. (because $B$ is simple), it follows from \cite[II.6.4]{NF}, that the spectral measure
of the minimal unitary dilation $U$ of $V$ is equivalent to Lebesgue measure.  This just means that any of the positive Borel measures defined by $ \sigma (\Om ) = \ip{\chi _\Om (U) f}{f}$, where $\Om \subset \T$ is
a Borel set, are equivalent (have the same sets of measure zero) to Lebesgue measure. Here $\chi _\Om $ denotes the characteristic function of the Borel set $\Om$,
and $\chi _\Om (U)$ is a projection by the functional calculus for unitary operators.  It follows that $U$ has no eigenvalues so that $b_w ^{-1} (U)$ is a densely defined non-canonical self-adjoint extension of $B$.
Moreover the fact that $U$ is minimal implies that $\mc{K} \ominus \H$ contains no non-trivial reducing subspace $S$  for $U$ (since otherwise $U | _{\K \ominus S}$ would be the minimal
unitary dilation of $V$). Hence $$b_w ^{-1} (U) \in \Ext{B}.$$

Now as in \cite[II.2]{NF} set $$ \mc{L} := \ov{(U-V ) \H} = \bigvee U \ker{B^* -w}, $$ and let
$$ \mc{R} _* := \K \ominus \left( \ov{ \bigoplus _{n \in \mathbb{Z} } U ^n \mc{L} } \right) = \K \ominus \left( \ov{ \bigoplus _{n \in \mathbb{Z} } U ^n \ker{B^* -w} }\right).$$
Now by \cite[Proposition 2.1 VI.2]{NF}, the Nagy-Foias characteristic function $\Theta _V$ is a unitary if and only if both $V^n \rightarrow 0$ and $(V^*) ^k \rightarrow 0$ in the strong operator topology. Note that if $n< \infty$ then since $V$ has equal defect indices, the only way $\Theta _V$ can be an isometry is if it is in fact unitary, \emph{i.e.} inner. Since $\Theta _V$ coincides with the Livsic characteristic function of $b_w (B)$ (this is a consequence of the fact that $V$ is a partial isometry, as discussed above), we conclude that $\Theta _B$ is inner if and only if  both $V^k \rightarrow 0$ strongly and $(V^*) ^k \rightarrow 0$ strongly. In the notation of \cite{NF}, if $\Theta _B$ is inner so that $V^k \rightarrow 0$ and $(V^* )^k \rightarrow 0$ strongly, and $V$ has defect indices $(n,n)$, $V$ is called a contraction of class $C_0 (n)$.

By \cite[II.3.1]{NF} the projection $P _*$ onto $\mc{R} _*$ can be calculated by the formula:
\be P _* h = \lim _{n \rightarrow \infty} U ^{-n} V^n h \label{residual}.\ee Note that $\ker{B^* -w}$ is cyclic for $b_w ^{-1} (U) \in \Ext{B}$ if and only if $P _* = 0$.

\begin{thm}
    Suppose $B \in \sym{n}{H}$, $Q_w$ is the projection onto $\ran{B-\ov{w}}$, and $P_w = 1 - Q_w$ projects onto $\ker{B^* -w}$. Let $V = b_w (B) Q_w$, $U$ the minimal unitary dilation
of $V$, and $A = b_w ^{-1} (U) \in \Ext{B}$. Then for any $h \in \H$,
\be (1 - P_* ) h = \sum _{j=0} ^\infty U^{-j} P_w U^{j} h = \sum _{j=0} ^\infty U^{-j} P_w V^j h.   \label{ucyclic} \ee

\label{thm:ucyclic}
\end{thm}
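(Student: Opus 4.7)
The plan is to reduce the theorem to a telescoping identity combined with the defining property \eqref{residual} of $P_*$.

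First I would observe that the two series on the right of \eqref{ucyclic} are trivially equal: since $V^j = P_\H U^j|_\H$ for $j\geq 0$ and $P_w$ is supported in $\H$, one has $P_w U^j h = P_w P_\H U^j h = P_w V^j h$ for every $h \in \H$, so it suffices to deal with the version involving $V^j$.

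Next I would establish the key intertwining relation
$$ U P_w = P_{\H^\perp} U \quad \text{on } \H. $$
This rests on the fact that $U$, being the minimal unitary dilation of the partial isometry $V = b_w(B)Q_w$ produced by a self-adjoint extension $A \in \Ext{B}$, acts on $Q_w \H$ exactly as $V$ does. Indeed $U Q_w h = V Q_w h = V h$, so $U h = U Q_w h + U P_w h = V h + U P_w h$; since $V h \in \H$, the orthogonal decomposition forces $P_\H U h = V h$ and $P_{\H^\perp} U h = U P_w h$, giving the claimed identity.

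With this identity in hand, I would write the finite telescoping sum
$$ h - U^{-n} V^n h \;=\; \sum_{j=0}^{n-1}\!\left( U^{-j} V^j h - U^{-(j+1)} V^{j+1} h \right) \;=\; \sum_{j=0}^{n-1} U^{-(j+1)}(U - V) V^j h. $$
Since $V^j h \in \H$ and $(U-V)g = U g - P_\H U g = P_{\H^\perp} U g$ for $g \in \H$, each term becomes $U^{-(j+1)} P_{\H^\perp} U V^j h$, and then the intertwining identity from the previous step collapses this to $U^{-j} P_w V^j h$. Thus
$$ h - U^{-n} V^n h \;=\; \sum_{j=0}^{n-1} U^{-j} P_w V^j h. $$
Finally I would let $n\to\infty$: the left side tends to $(1-P_*)h$ by \eqref{residual}, yielding the claimed series identity (in particular, the right-hand partial sums converge strongly).

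The main technical point is the intertwining $U P_w = P_{\H^\perp} U$ on $\H$; everything else is bookkeeping. The only subtlety there is that one must invoke the fact that $U$ genuinely restricts to $V$ on $Q_w \H$, which follows from $U = b_w(A)$ with $A \supset B$, so that $b_w(A)$ agrees with $b_w(B) = V$ on $\ran(B-\ov w) = Q_w \H$. Once this is in place the telescoping argument is immediate and the conclusion follows from the already-established formula for $P_*$.
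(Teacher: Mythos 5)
Your proof is correct, and it reaches the finite identity $h - U^{-n}V^{n}h = \sum_{j=0}^{n-1} U^{-j}P_w V^{j}h$ by a genuinely different route than the paper. The paper first establishes Lemma \ref{expansion}, a finite expansion valid for an \emph{arbitrary} extension $A \in \Ext{B}$, proved by an iterative peeling argument with the Cayley transforms $b_w(B)$ and $b_w^\dag (A)$, and only then specializes to $A = b_w^{-1}(U)$ with $U$ the minimal unitary dilation before invoking the Nagy--Foias formula (\ref{residual}); you instead derive the finite identity directly in the dilation setting by telescoping, using the dilation property $P_\H U^{j}|_\H = V^{j}$ together with the intertwining $UP_w = P_{\H ^\perp}U$ on $\H$, and both arguments finish identically with (\ref{residual}). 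Your version is shorter and self-contained for this theorem, but it genuinely requires the dilation property: for a general unitary extension $U$ of $V$ (e.g.\ a canonical one, which maps $\ker{V}$ into $\H$) one has $P_\H U|_\H \neq V$ and the telescoping collapse fails, whereas the paper's Lemma \ref{expansion} holds for every $A \in \Ext{B}$ and is reused in Corollary \ref{innercyclic} to obtain formula (\ref{ucyclic2}) for all extensions, which is what the later applications need. One small tightening: in your intertwining step, the phrase ``the orthogonal decomposition forces'' is not by itself a valid inference that $UP_w h \perp \H$; this does follow, in one line, from the dilation property applied to the vector $P_w h \in \ker{V}$, namely $P_\H U P_w h = V P_w h = 0$. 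With that fix your argument is complete and correct, including the observation that $P_w U^{j}h = P_w P_\H U^{j}h = P_w V^{j}h$ which equates the two series in (\ref{ucyclic}).
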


\begin{lemma}
Let $B \in \sym{n}{\H}$, $A \in \Ext{B}$. Given any $h \in \H$ then for any $k \in \N$:

\be h = \sum _{j=0} ^k b _w ^\dag (A) ^j P_w (b_w (B) Q_w )^j h +b_w ^\dag (A) ^{k+1} (b_w (B) Q_w ) ^{k+1} h  . \label{partway} \ee

\label{expansion}

\end{lemma}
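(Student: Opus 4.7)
The plan is to prove the identity by induction on $k$. Throughout, let $U := b_w(A)$ so that $b_w^\dag(A) = U^*$ is the inverse of the unitary $U$, and write $V := b_w(B)Q_w$ as in the setup preceding the lemma. The key structural fact is that $U \in \Ext{b_w(B)}$ is a unitary extension of the partial isometry $V$, which means $U$ agrees with $V$ on $\ker{V}^\perp = Q_w\H$.

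\textbf{Base case ($k=0$).} I would establish the single-step telescoping identity
\[
 h = P_w h + U^{-1} V h \qquad (h \in \H).
\]
To see this, decompose $h = P_w h + Q_w h$. Since $V$ annihilates $\ker V = P_w\H$, we have $V h = V Q_w h$, and since $U$ extends $V$ on $Q_w\H$, this equals $U Q_w h$. Applying $U^{-1}$ gives $U^{-1}Vh = Q_w h = h - P_w h$, which rearranges to the claimed identity and is precisely \eqref{partway} for $k=0$.

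\textbf{Inductive step.} Assume
\[
 h = \sum_{j=0}^{k} U^{-j} P_w V^j h + U^{-(k+1)} V^{k+1} h.
\]
Since $V$ maps $\H$ to $\H$, the vector $V^{k+1}h$ lies in $\H$, so the base-case identity applies to it:
\[
 V^{k+1}h = P_w V^{k+1}h + U^{-1} V(V^{k+1}h) = P_w V^{k+1}h + U^{-1} V^{k+2}h.
\]
Multiplying by $U^{-(k+1)}$ and substituting into the inductive hypothesis gives
\[
 h = \sum_{j=0}^{k+1} U^{-j} P_w V^j h + U^{-(k+2)} V^{k+2}h,
\]
which closes the induction.

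The only real subtlety is the opening observation that $U$ and $V$ agree on $\ker V^\perp$, together with the fact that $V\H \subset \H$ (so that iterates $V^j h$ remain in $\H$ and the projection $P_w$ and partial isometry $V$ continue to make sense at each stage). Neither of these is a genuine obstacle: the former is part of the definition of $\Ext{b_w(B)}$ and the latter holds because $V = b_w(B)Q_w$ is defined on all of $\H$ with range in $\H$. Once these are noted, the argument is an elementary induction with no further technical content, which is presumably why the authors state the lemma at this point as a computational step toward Theorem \ref{thm:ucyclic}.
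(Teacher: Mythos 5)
Your proof is correct and takes essentially the same route as the paper: both arguments reduce to the one-step identity $h = P_w h + b_w^\dag (A) \, b_w (B) Q_w h$, valid because the unitary $b_w (A)$ extends the isometry $b_w (B)$ on $\ran{B-\ov{w}}$, and then iterate it. The paper carries out the iteration informally (``repeating this process $k$ times''), working first with $b_w ^\dag (B)$ and then replacing it by $b_w ^\dag (A)$, whereas you package the same telescoping as a formal induction; the substance is identical.
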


In the above recall that $b_w (z) := \frac{z-w}{z-\ov{w}}$ and $b_w ^\dag (z) = \ov{b_w (\ov{z})} = \frac{z-\ov{w} }{z-w}$.

\begin{proof}

    This clearly holds if $h  \in \ran{B-\ov{w}} ^\perp$. If $h \perp \ker{B^* -w}$ then
$$ h = Q_w h = b_w ^\dag (B) h_1, $$ for some $h_1 \in \ran{B-w} = \dom{b_w ^\dag (B)}$. Now
$$ h_1 = Q_w h_1 + P_w h_1, $$ and if we define $h_2 := Q_w h_1$ then $h_2 = b_w ^\dag (B) h_3$ for some
$h_3 \in \ran{B-w}$.
Now $$ h_3 = b_w (B) h_2 = b_w (B) Q_w h_1 = b_w (B) Q_w b_w (B) Q_w h = (b_w (B) Q_w )^2 h, $$ and
\ba h & =& b_w ^\dag (B) h_1 = b_w ^\dag (B) ( h_2 + P_w h_1) \\
& =& b_w ^\dag(B) (P_w h_1 + b_w ^\dag (B) h_3 ) \\
& =& b_w ^\dag (A) (P_w h_1 + b_w ^\dag (A) h_3 ) \\
& =& b_w ^\dag (A) P_w b_w (B) Q_w h + b_w ^\dag (A) ^2 (b_w (B) Q_w )^2 h .\ea

Repeating this process $k$ times yields $h_{2k+1} = (b_w (B) Q_w ) ^{k+1} $ and one obtains the formula stated above, namely,

$$ h = \sum _{j=0} ^k b_w ^\dag (A) ^j P_w (b_w (B) Q_w )^j h +b_w ^\dag (A) ^{k+1} (b_w (B) Q_w ) ^{k+1} h. $$

\end{proof}

\begin{proof}{ (Theorem \ref{thm:ucyclic})}
In the case where $A = b_w ^{-1} (U)$ where $U$ is the minimal unitary dilation of $V = b_w (B) Q_w$, the formula (\ref{partway}) becomes:
\ba h & = &   \sum _{j=0} ^k  U^{-j} P_w V^j h +U ^{-(k+1)} V ^{k+1} h \nonumber \\
& = &   \sum _{j=0} ^k U^{-j} P_w U^j h +U ^{-(k+1)} V ^{k+1} h \ea
Now we use the formula (\ref{residual}) of Nagy-Foias to conclude that if $A = b_w ^{-1} (U)$ where $U$ is the minimal unitary dilation of $V$, that $U^{-(k+1)} V^{k+1} h \rightarrow P _* h$,
proving the formula (\ref{ucyclic}) and the theorem.
\end{proof}

\begin{cor}
    Suppose that $B \in \sym{n}{\H}$. Then $\ker{B^* -w}$ and $\ker{B^* -\ov{w}}$ are cyclic for every $A \in \Ext{B}$ if and only if $\Theta _B$ is inner. If $\Theta _B$ is inner then the formulas
\be h =  \sum _{j=0} ^\infty b_w ^\dag (A) ^j P_{w} (b_w (B) Q_{w} )^j h,  \label{ucyclic2} \ee hold for any $A \in \Ext{B}$ and $h \in \H$. \label{innercyclic}
\end{cor}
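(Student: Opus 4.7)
The plan is to get both directions from the explicit expansion in Lemma~\ref{expansion} together with the characterization $\Theta_B$ inner $\iff V^k \to 0$ and $(V^*)^k \to 0$ strongly (where $V = b_w(B) Q_w$) that was already established in the text.

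For the $(\Leftarrow)$ direction, assume $\Theta_B$ is inner. Fix any $A \in \Ext{B}$ and let $U_A := b_w(A)$, a unitary extension of $V$ acting on $\K \supset \H$. Applying Lemma~\ref{expansion}, for every $h \in \H$ and every $k \in \N$,
\[
h - \sum_{j=0}^k b_w^\dag(A)^j P_w (b_w(B) Q_w)^j h \;=\; b_w^\dag(A)^{k+1} V^{k+1} h \;=\; U_A^{-(k+1)} V^{k+1} h.
\]
Since $U_A^{-1}$ is unitary, the norm of the remainder is exactly $\|V^{k+1} h\|$, which tends to $0$ by the characterization of inner $\Theta_B$. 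This proves formula (\ref{ucyclic2}). From (\ref{ucyclic2}) every $h \in \H$ lies in $\bigvee_{j \geq 0} U_A^{-j}\ker{B^*-w}$, so $\H \subset \bigvee \mr{vN}(A)\ker{B^*-w}$. The latter is a reducing subspace for $\mr{vN}(A)$ containing $\H$, and since by Definition~\ref{extend} $\K$ is the smallest reducing subspace for $\mr{vN}(A)$ containing $\H$, this forces $\bigvee \mr{vN}(A)\ker{B^*-w} = \K$, i.e.\ $\ker{B^*-w}$ is cyclic for $A$. The argument for $\ker{B^* - \ov{w}}$ is symmetric: replace $w$ by $\ov{w}$, note $V' := b_{\ov{w}}(B)Q_{\ov{w}} = V^*$, and use $(V^*)^k \to 0$ strongly.

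For the $(\Rightarrow)$ direction, we specialize to the single extension $A = b_w^{-1}(U_{\min}) \in \Ext{B}$ constructed from the minimal unitary dilation $U_{\min}$ of $V$, which is available because $B$ is simple (so $V$ is c.n.u.). By Theorem~\ref{thm:ucyclic}, the partial sums in (\ref{ucyclic}) converge to $(1-P_*)h$, so cyclicity of $\ker{B^*-w}$ for this $A$ forces $P_* = 0$. By the Nagy--Foias formula $P_* h = \lim_{n\to \infty} U_{\min}^{-n} V^n h$, and since $U_{\min}^{-n}$ is an isometry, $P_* = 0$ is equivalent to $V^n h \to 0$ strongly. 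Running the same argument with $w$ replaced by $\ov{w}$ (the minimal unitary dilation of $V^* = b_{\ov{w}}(B) Q_{\ov{w}}$) shows that cyclicity of $\ker{B^*-\ov{w}}$ forces $(V^*)^n \to 0$ strongly. Combining the two yields that $\Theta_V$ is an isometry on both sides, i.e.\ unitary, so $\Theta_B$ is inner.

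The main (and essentially only) obstacle is bookkeeping: one needs the remainder term $U_A^{-(k+1)} V^{k+1} h$ to be well-defined and estimated uniformly even in the exceptional case where $1 \in \sigma_p(U_A)$ (so $A = b_w^{-1}(U_A)$ is only formal). This is handled by working with $U_A = b_w(A)$ instead of $A$: $U_A$ is always a genuine unitary on $\K$, $b_w^\dag(A) = U_A^{-1}$ is isometric, and the formula in Lemma~\ref{expansion} remains valid for the unitary $U_A$ in place of any inverse Cayley transform, so no densely-defined hypothesis on $A$ is needed. Everything else — convergence of the series, identification of the limit with $h$, and translating cyclicity in $\K$ back to strong convergence in $\H$ — is direct.
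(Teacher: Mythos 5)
Your proof is correct and follows essentially the same route as the paper's: Lemma \ref{expansion} with the unitarity of $b_w^\dag(A)$ to reduce the remainder to $\|V^{k+1}h\|$, the previously established equivalence between $\Theta_B$ inner and $V^k, (V^*)^k \to 0$ strongly, minimality of $\K$ to upgrade ``contains $\H$'' to cyclicity, and for the converse the extension built from the minimal unitary dilation together with the Nagy--Foias residual projection $P_*$. The only cosmetic difference is that for $n<\infty$ the paper deduces the converse from cyclicity of $\ker{B^* -w}$ alone (an a.e.\ isometric $n\times n$ boundary value is automatically unitary when the indices are equal and finite), whereas you invoke both cyclicity hypotheses — harmless, since both are assumed in the statement.
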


\begin{remark}
\label{Acyclic}
    Let $\{ w_j \}$ be some fixed orthonormal basis of $\ker{B^* -w}$, and let $J_w : \C^n \rightarrow \ker{B^* -w}$ be defined by $J_w e_k =w_k$,
where $\{e_k \}$ is an orthonormal basis of $\C ^n$. Consider $L^2 _\Sigma$ where $\Sigma$ is the $\C^{n\times n}$ matrix-valued positive Borel measure defined by $\Sigma (\Om ) = J_w^* P_w P_A (\Om ) P_w J_w$, $P_A (\Om ) := \chi _\Om (A)$, where $\chi _\Om $ is the characteristic function of the Borel set $\Om$.
The above corollary shows in particular that for any fixed $h \in \H$ there is a vector function $\vec{f} = (f_1 , ... , f_n) \in L^2 _\Sigma$ such that
$$h = f_1 (A) w_1 + f_2 (A) w_2 + ... + f_n (A) w_n,$$ and that, remarkably, this equation holds independently of the choice of $A \in \Ext{B}$, \emph{i.e.}
the same $\vec{f}$ works for all $A \in \Ext{B}$ when $h $ is held fixed. \label{uniquecyclic} Although we will not pursue this in this paper, this fact can be used to provide a new proof, and potentially a slight extension of the Alexandrov isometric measure theorem, \cite[Theorem 2]{Alexiso}.
\end{remark}

\begin{proof}
By Lemma \ref{expansion}, given any $A \in \Ext{B}$ and $h \in \H$,
$$ h = \sum _{j=0} ^k b_w ^\dag (A) ^j P_w (b_w (B) Q_w )^j h +b_w ^\dag (A) ^{k+1} (b_w (B) Q_w ) ^{k+1} h. $$   Hence to prove the formula (\ref{ucyclic2}), it suffices to show
that $\| (b_w (B) Q_w) ^k  h \| = \| b_w ^\dag (A ) ^k (b_w (B) Q_w ) ^k ) h \| \rightarrow 0$.

If $\Theta _B$ is inner then $V ^n \rightarrow 0$ strongly (where recall $V = b_w (B) Q_w$) so that $$ 0 = \lim _{n \rightarrow \infty} \| V^n h \|, $$ and so the formula (\ref{ucyclic2}) holds.
If $A= b_w ^{-1} (U)$, then the fact that $\ker{B^* -w}$ is cyclic follows from the fact that $\mc{R} _* = \{ 0 \}$.
For arbitrary $A \in \Ext{B}$, the formula (\ref{ucyclic2}) shows that the cyclic subspace $S_w$ for any fixed $A \in \Ext{B}$ generated by $\ker{B^* -w}$ contains $\H$. Hence if $A$ is self-adjoint in $\K$, then $S_w = \K$, as otherwise $\K \ominus S_w$ would be a non-trivial subspace of $\K \ominus \H$ which is reducing for $A$ (this contradicts one of our assumptions on $\Ext{B}$). This proves that $\ker{B^* -w}$ is cyclic for any $A \in \Ext{B}$.

Conversely if $\ker{B^* -w}$ is cyclic for any $A \in \Ext{B}$, then it is cyclic for $b_w ^{-1} (U)$ where $U$ is the minimal unitary dilation of $V = b_w (B) Q_w $, and it follows from the definition of $R _*$ that $P_* = 0$,
and hence $T^n \rightarrow 0$ strongly. If $n <\infty$ this implies $T $ is a contraction of class $C_0 (n)$, implying that the characteristic function $\Theta _B$ of $B$ is inner as discussed previously.
If $n = \infty$ our assumption that $\ker{B^* -\ov{w}}$ is cyclic also implies that $(T^*) ^k \rightarrow 0$ strongly as well so that we get that $\Theta _B$ is inner.
\end{proof}

Note that the above proof also shows:

\begin{cor}
If $B \in \sym{n}{\H}$, $n < \infty$, and there is a $w \in \C \sm \R$ such that $\ker{B^*-w } $ is cyclic for every $A \in \Ext{B}$, then
$\Theta _B$ is inner.
\end{cor}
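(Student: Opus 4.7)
The plan is to observe that this corollary falls out of essentially the same argument used in the converse direction of Corollary \ref{innercyclic}, but now we must argue that the single cyclicity assumption at $w$ (rather than at both $w$ and $\bar w$) already suffices, thanks to the finiteness assumption $n<\infty$. So I would apply the hypothesis to a very specific extension, then extract from it that the partial isometry $V := b_w(B) Q_w$ is of class $C_{0\cdot}$, and finally upgrade this to the class $C_{00}$ using only that the defect indices are equal and finite.

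More concretely, the first step is to specialize to the extension $A := b_w^{-1}(U)$, where $U$ is the minimal unitary dilation of $V = b_w(B)Q_w$. Because $B$ is simple, $V$ is c.n.u., and as the paper has already recorded this guarantees that $U$ has spectral measure equivalent to Lebesgue measure, so $1 \notin \sigma_p(U)$ and $A \in \Ext{B}$. The hypothesis applies to this $A$ and says $\ker{B^*-w}$ is cyclic for it, which by the very definition of $\mc{R}_*$ is equivalent to $\mc{R}_* = \{0\}$, hence $P_* = 0$. Formula (\ref{residual}) then reads $0 = P_* h = \lim_{n\to\infty} U^{-n} V^n h$ for every $h \in \H$, and since $U^{-n}$ is unitary this yields $\|V^n h\| \to 0$, i.e.\ $V^n \to 0$ strongly.

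The last step is the one where finite dimensionality is actually used. By the Nagy--Foias dilation theory the strong convergence $V^n \to 0$ is equivalent to the boundary values $\Theta_V(e^{it})$ being isometries almost everywhere. But $V$ has equal defect indices $(n,n)$ with $n<\infty$, so $\Theta_V(e^{it})$ is an $n\times n$ matrix, and an isometry between equidimensional finite spaces is automatically unitary. Thus $\Theta_V$ is in fact two-sided inner, and since the Nagy--Foias characteristic function of the partial isometry $V$ coincides (up to the Cayley rotation $b_w^{-1}$) with the Livsic characteristic function of $b_w(B)$, the function $\Theta_B$ itself is inner.

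There is essentially no serious obstacle: the machinery (formula (\ref{residual}), the fact that $A = b_w^{-1}(U) \in \Ext{B}$, the Nagy--Foias characterization of inner characteristic functions, and the remark that in equal finite dimensions an isometry is unitary) has all been set up in the preceding pages. The only point deserving care is making explicit why, under $n<\infty$, the one-sided condition $V^n \to 0$ promotes to the two-sided condition $V \in C_{00}$ — this is precisely where cyclicity at $\bar w$ was needed in the $n=\infty$ case of Corollary \ref{innercyclic} and where, by contrast, the equal-dimension square-matrix geometry lets us drop that hypothesis here.
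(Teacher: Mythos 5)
Your proposal is correct and is essentially the paper's own argument: the paper obtains this corollary directly from the converse half of the proof of Corollary \ref{innercyclic}, namely applying the cyclicity hypothesis to $A = b_w^{-1}(U)$ with $U$ the minimal unitary dilation of $V = b_w(B)Q_w$, reading off $P_* = 0$ from formula (\ref{residual}) so that $V^k \to 0$ strongly, and then using $n<\infty$ with equal defect indices to upgrade the one-sided boundary behaviour of $\Theta_V$ to unitary boundary values, i.e.\ $\Theta_B$ inner. The only cosmetic point is that in the Nagy--Foias convention $V^k \to 0$ corresponds to co-isometric rather than isometric boundary values of $\Theta_V$, but for square $n\times n$ matrix values with $n<\infty$ this distinction is immaterial and your conclusion stands.
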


\section{A larger reproducing kernel Hilbert space $\K _A \supset \H _A$}

\label{larger}

\begin{defn} Given any $A \in \Ext{B}$, let \be \Omega _A (z) := U_{-i, z} J, \ee where recall that provided $A = b^{-1} (U)$ and
 $U$ does not have $1$ as an eigenvalue then $$ U_{-i , z} J = (A+i)(A-\ov{z}) ^{-1} J, $$ where recall that $J = J _{-i} = P_{-i} J_{-i}$ and $J : \C ^n \rightarrow \ker{B^* +i}$.
In the exceptional case where $A \in \Ext{B}$ is defined using a unitary extension $U$ of $b(B)$ and $1 \in \sigma _p (U)$, recall that $U_{w,z}$ is given by formula (\ref{except}).
We will assume in this section that $J$ is an isometry. Note that \be \Gamma _A (z) = P_\H \Om _A (z). \ee

We define a new reproducing kernel Hilbert space $K _A$ as the abstract $\C ^n$-valued reproducing kernel Hilbert
space on $\C \sm \R$ with reproducing kernel
$$ K _w  (z) := \Omega (z) ^* \Omega (w) $$
\end{defn}

The existence of $\mc{K} _A$ follows from the fact that $K_w (z)$ is a
positive kernel function, and the abstract theory of reproducing kernel Hilbert spaces \cite[Theorem 10.11]{Paulsen-rkhs}.

Observe that the difference
$$ K_w (z) - k_w (z) = \Om (z) ^* (\bm{1} - P_\H) \Om (w), $$
is a positive kernel function. The theory of reproducing kernel Hilbert spaces then implies that $\H _A $ is contractively contained in $\K _A$ \cite[Theorem 10.20]{Paulsen-rkhs}.

For $\vec{v} \in \C ^n$ the function $K_w \vec{v} $ defined by $$ K_w \vec{v} (z) := K_w (z) \vec{v}, $$ is a point evaluation vector in $\K _A$, \emph{i.e.}
$$ \ip{h}{K_w \vec{v}} _{\K _A} = \left( h(w), \vec{v} \right) _{C^n}, $$ for any $h \in \K _A$.

\begin{defn}
    Suppose that $\Theta _B$ is inner. Given $A \in \Ext{B}$ self-adjoint in $\K \supset \H$, recall that we define $U_A : \H  \rightarrow \H _A$ by $$ U_A (f) (z) = \hat{f} (z) = \Gamma (z) ^* f.$$
Now define a linear map $V_A : \K \rightarrow \K _A $ by $$ (V _A f) (z)  = \Omega _A (z) ^* f,$$ for $f \in \K$.    \label{cc}
\end{defn}

Note that if $g \in \H$ that
$$ (V_A g) (z) =  J^* (A-z) ^{-1} (A-i)  g = \Gamma ^* (z) g = (U_A g) (z), $$ so that for any $g \in \H$,
$$ U_A g (z) = V_A g (z).  $$ Hence if $E_A : \H _A \rightarrow \K _A$ is the contractive embedding then
\be V_A P_\H = E_A U_A. \ee

Also observe that if $\vec{u} \in \C ^n$, then
$$ K_w \vec{u} = V_A \Om (w) \vec{u}, $$ is the $\vec{u}$ point-evaluation vector in $\K _A$ at $w$.

\begin{prop}
    The linear map $V_A: \K \rightarrow \K _A$ is an isometry of $\K$ onto $\K _A$. Hence if $h \in \H$, then
    $$ \| V_A h  \| _{\K _A} = \|  h \| = \| U_A h \| _{\H _A} ,$$  so that $\H _A \subset \K _A$ isometrically and $U_A = V_A | _\H$.
\end{prop}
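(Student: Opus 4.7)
The plan is to prove that $V_A$ is an isometry onto $\K _A$ by the standard reproducing-kernel extension procedure: verify the isometric property on a dense spanning set of ``kernel-preimage'' vectors, and then use that the point-evaluation vectors span $\K _A$ to deduce surjectivity. The essential input at the density step is Corollary~\ref{innercyclic}, which guarantees cyclicity of $\ker{B^*+i}$ for every $A \in \Ext{B}$ precisely because $\Theta _B$ is inner.

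First I would verify isometricity on the linear span $S$ of $\{\Omega _A(w)\vec{u} : w \in \C \sm \R,\ \vec{u} \in \C^n\}$. By construction $(V_A \Omega _A(w)\vec{u})(z) = \Omega _A(z)^* \Omega _A(w)\vec{u} = K_w(z)\vec{u}$, so $V_A$ sends $\Omega _A(w)\vec{u}$ to the point-evaluation vector $K_w\vec{u}$ of $\K _A$. The reproducing property then gives
\begin{align*}
\ip{V_A \Omega _A(w)\vec{u}}{V_A \Omega _A(z)\vec{v}} _{\K _A}
&= \left( K_w(z)\vec{u},\vec{v}\right) _{\C ^n} \\
&= \left( \Omega _A(z)^* \Omega _A(w)\vec{u},\vec{v}\right) _{\C ^n}
= \ip{\Omega _A(w)\vec{u}}{\Omega _A(z)\vec{v}} _\K,
\end{align*}
so $V_A$ preserves inner products on $S$.

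Next I would establish that $S$ is dense in $\K$. In the generic case $1 \notin \sigma _p(b(A))$, the identity $(A+i)(A-\bar{w})^{-1} = \mathbb{1} + (\bar{w}+i)(A-\bar{w})^{-1}$ shows that $S$ contains both $J\C ^n = \ker{B^*+i}$ and $(A-\bar{w})^{-1} J\C ^n$ for every $w \in \C \sm \R$; the exceptional case $1 \in \sigma _p(b(A))$ is handled identically after rewriting formula (\ref{except}) in terms of the functional calculus of $U = b(A)$. Since the functions $\{(t-\bar{w})^{-1} : w \in \C \sm \R\}$ are dense in $C_0(\R)$ (by Stone--Weierstrass), the closed span of $\{(A-\bar{w})^{-1}v\}$ is the cyclic subspace of $\K$ generated by $v$, so the closure of $S$ coincides with the smallest reducing subspace for $A$ containing $\ker{B^*+i}$. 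Under our standing hypothesis that $\Theta _B$ is inner, Corollary~\ref{innercyclic} asserts precisely that this subspace is all of $\K$.

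Finally, since $\Omega _A(z)^*$ is a bounded functional on $\K$ for each $z$ and since norm convergence in any reproducing kernel Hilbert space implies pointwise convergence, the isometric extension of $V_A|_S$ to all of $\K$ must agree pointwise with the formula $(V_A f)(z) = \Omega _A(z)^* f$, so $V_A$ as defined is a well-defined isometry. Its image contains every point-evaluation vector $K_w\vec{u}$, and these span a dense subspace of $\K _A$, so $V_A$ is onto. The remaining identities $\|V_A h\| _{\K _A} = \|U_A h\| _{\H _A}$ for $h \in \H$ and the isometric containment $\H _A \subset \K _A$ then follow from the relation $V_A P_\H = E_A U_A$ recorded immediately before the statement. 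The main point of the argument---and the only place the inner hypothesis on $\Theta _B$ is used---is the density/cyclicity step.
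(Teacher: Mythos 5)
Your proof is correct and follows essentially the same route as the paper's: verify the isometric property on the dense span of the vectors $\Om _A (w)\vec{v}$ via the kernel identity $\ip{\Om _A(w)\vec{u}}{\Om _A(z)\vec{v}} = \left( K_w(z)\vec{u},\vec{v}\right)$, obtain density of that span in $\K$ from the cyclicity of $\ker{B^*+i}$ supplied by Corollary \ref{innercyclic}, and deduce surjectivity from the density of the point-evaluation vectors in $\K _A$, with the final identities coming from $V_A P_\H = E_A U_A$. Your Stone--Weierstrass justification of why the resolvent-type vectors exhaust the cyclic subspace simply makes explicit a step the paper asserts without comment.
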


\begin{proof}
    Recall that since we assume that $B$ is such that $\Theta _B$ is inner, Corollary \ref{innercyclic} implies that $\ker{B^* +i}$ is cyclic for $A$.

    Since $\ker{B^* +i}$ is cyclic, $\K $ is spanned by vectors of the form $\Om (w) J \vec{v}$ for $w \in \C \sm \R$ and $\vec{v} \in \C ^n$. In particular
for any $\vec{v} \in \C ^n$, the vector $V_A \Om (w) \vec{v} \in \K _A$ since
$$ (V_A \Om (w) \vec{v} ) (z) := (\Om (z) ^* \Om (w)) \vec{v}  = K _w (z) \vec{v} ,$$ and $K_w \vec{v} \in \K_A$. The set of all point evaluation vectors $K_w \vec{v}$, $K_w \vec{v} (z) := K_w (z) \vec{v}$  for $w \in \C \sm \R$ are by definition dense in $\K _A$ so that this also proves $V_A$ is onto $\K _A$.

To see that $V_A$ is an isometry use that vectors of the form $f = \sum _j c_j \Om (w_j )  \vec{v} _j  $, for $\vec{v} _j \in \C ^n$ and $w_j \in \C$ are dense in $\K$, so that
\ba \ip{f}{f}  & = & \sum _{ij} c_i \ov{c_j} \ip{\Om (w_i) \vec{v} _i}{ \Om (w_j) \vec{v} _j } _\K  \\
& = &  \sum _{ij} c_i \ov{c_j} \left( K _{w_i} (w_j) \vec{v}_i , \vec{v} _j \right) _{\C ^n}  \\
& = & \sum _{ij}  c_i \ov{c_j} \ip{ K _{w_i} \vec{v} _i  }{K_{w_j}  \vec{v} _j} _{\K _A} \\
& = & \ip{V_A f}{V_A f} _{\K _A}. \ea

Now  if $h \in \H$, then $$ \| E_A U_A h \| _{\K _A } = \| V_A h \| _{\K _A} =  \| h \| = \| U_A h \| _{\H _A}. $$ Hence the contractive embedding $E_A : \H _A \rightarrow \K _A$
is actually an isometric inclusion, and $\H _A \subset \K _A$ as a Hilbert subspace.

\end{proof}

\subsection{Cauchy transforms and characteristic functions for $A \in \Ext{B}$}

For any $A \in \Ext{B}$, let $U:= b(A)$ be the corresponding unitary extension of $V:= b(B)$, and define $\sigma _U$ as the $\C ^{n\times n}$ matrix-valued measure on the unit circle $\T$ given by
$$ \sigma _U (\Om ) = \pi J^* P _U (\Om ) J, $$ where $P_U (\Om ) := \chi _\Om (U)$ is the projection-valued measure of $U$ defined using the functional calculus and recall that $J : \C ^n \rightarrow \ker{B^* +i} = \ker{V}$ is a fixed isometry.
We also define the $\C ^{n \times n}$ positive matrix-valued measure on $\R$, $\Sigma _A$ by
$$ \Sigma _A (\Om ) := \int _\Om \pi (1+t^2) J^* P_A (dt) J, $$ and note that if $\sigma _A (\Om ) := J^* P_A (\Om ) J$, then $\sigma _A = \sigma _U \circ b $, where $b(z) = \frac{z-i}{z+i}$ as before.

\begin{defn}
    If $A \in \Ext{B}$ with $A = b^{-1} (U)$, let $\Phi [A ;B]$ be the contractive analytic function on $\C _+$ corresponding to the pair $(\sigma _U (\{ 1 \}),  \Sigma _A)$ as described in Section \ref{Herglotz}.
When there is no chance of confusion we will suppress dependence on $B$ and use the simplified notation $\Phi _A$ for $\Phi [A ;B]$.  We call $\Phi [A;B]$ the characteristic function of $A$ relative to $B$, or simply the characteristic function of $A$ when it is clear which $B$ is used in the definition of $\Phi [A;B]$.
\end{defn}

In more detail, if $\phi := \phi [U ; V]$, then
$$ \re{ g_\phi (z) } = \int _\T \re{\frac{\alpha +z}{\alpha -z} } \sigma _U (d\alpha ), $$ where
$$ \phi = \frac{g_\phi -\bm{1} }{g_\phi + \bm{1} }$$ Equivalently if we impose the normalization condition discussed in Section \ref{Herglotz},
$$ g _\phi (z) = \int _\T \frac{\alpha +z}{\alpha -z}  \sigma _U (d\alpha ).$$  By the relationship between Herglotz functions on the disc and upper half-plane, as discussed in Section \ref{Herglotz}, we have that
$$ \re{G_{\Phi _A} (z) } = \sigma _U ( \{ 1 \} ) \im{z} + \intfty \re{ \frac{1}{i\pi} \frac{1}{t-z} } \Sigma _A (dt ), $$ or equivalently
\ba G _{\Phi _A } (z) & = & -iz \sigma _U ( \{ 1 \} ) + \frac{i}{\pi} \intfty \frac{tz +1}{t-z} \frac{1}{1+t^2} \Sigma _A (dt) \nonumber \\
& =& -iz \sigma _U ( \{ 1 \} ) +  \intfty \frac{tz +1}{i(t-z)}  (\sigma _U \circ b)  (dt) \nonumber \\
& = & -iz \sigma _U ( \{ 1 \} ) +  \intfty \frac{tz +1}{i(t-z)}  J^* P_A (dt) J. \nonumber \ea
In particular if $U$ does not have $1$ as an eigenvalue, then $\Phi _A$ is uniquely determined by $\Sigma _A$. Note that since $U$ is unitary, the projection-valued measure $P _U $ is unital which 
implies that $\sigma _U$ is a unital probability measure so that $g_\phi (0 ) = \bm{1}$, and this in turn implies that $\phi ( 0 ) = 0$, and that 
$$ \Phi [A ; B ] (i ) = 0, $$ for any $A \in \Ext{B}$. 

\begin{remark}
\label{WeylTit}
    Our definition of the characteristic function $\Phi[A;B]$ of the extension $A$ relative to $B$ is really an equivalent reformulation of the concepts
of the Weyl-Titchmarsh function and the Livsic characteristic function of the pair $(B, A)$ \cite{Don,MT1,MT2}.

Namely in \cite{Don}, Donoghue defines the Weyl-Titchmarsh function of a pair $(B,A)$, where $B$ is a densely defined simple symmetric operator with deficiency indices $(1,1)$ and $A \in \Ext{B}$ by the
formula \ba M (B, A ) (z) & := &  \ip{(Az +i )(A-zI ) ^{-1} g_+}{g_+} \nonumber \\
& =& \intfty \frac{tz+1}{t-z} \ip{P_A (dt) g_+}{g_+}, \nonumber \ea  where $g_+$ is a fixed normalized element in $\ker{B^* -i }$. In this case where $B$ has indices $(1,1)$,
we can define our isometry $J : \C \rightarrow \ker{B^* + i}$ in the construction of $\Ga _A$ and $\Om _A$ by $J e_1 = g_-$ where $e_1 = 1$ is a trivial orthonormal basis of $\C$ and $g_-$ is a fixed unit element of $\ker{B^* +i}$. In this case the Herglotz function $G_{\Phi _A}$ is just
\ba G _{\Phi _A } (z) & = &  -i  \intfty \frac{tz +1}{(t-z)}  J^* P_A (dt) J \nonumber \\
& =& -i \intfty \frac{tz +1}{(t-z)} \ip{P_A (dt) g_-}{g_-}. \nonumber \ea  This would be simply the Weyl-Titchmarsh function for the pair (B, A) multiplied by $-i$, if we had defined $\Phi [A;B]$ using
the deficiency subspace $\ker{B^* -i}$ instead of $\ker{B^* +i}$. Namely if we instead define $\check{\sigma}   (\Om ) = J_i ^* P_U (\Om ) J_i$, $\check{g} _U$ the corresponding Herglotz function on $\D$,
and $\check{G} _{A}$ the corresponding Herglotz function on $\C _+$, then $\check{G} _A = -i M(B,A)$.  Note here that since $B$ is densely defined
$U = b (A)$ does not have $1$ as an eigenvalue.

In \cite{MT1}, the Livsic function of the pair $(A,B)$, where $B$ as above has indices $(1,1)$ is defined to be
$$ s(B,A) (z) := \frac{M(z) -i}{ M(z) +i }.$$ Again if we had chosen to work with $\ker{B^* -i}$ instead of $\ker{B^* +i}$ then we would have
that $s(B,A) (z) = \check{\Phi} [A ; B] (z)$, where $\check{\Phi} [A ; B ] (z)$ is the contractive analytic function corresponding to the Herglotz function $\check{G} _A (z)$.

One can construct a natural bijective map between the sets of functions $\check{\Phi} [A ; B ]$ and the functions $\Phi [A ; B]$ where $\check{\Phi} [A ; B ] $ is defined using an isometry
$J_i : \C ^n \rightarrow \ker{B^* -i }$, and $\Phi [A ; B ]$ is defined using as isometry $J : \C ^n \rightarrow \ker{B^* +i}$ using the conjugation maps $C_B $ and $C _{B_T}$ described
in Section \ref{Herglotz}. Namely recall that if $B \in \sym{n}{\H}$ has Livsic characteristic function $\Theta _B$, then $B_T \in \sym{n}{\H _T}$ is a simple symmetric linear transformation with Livsic function $\Theta _B ^T$,
and there is a pair of anti-unitary maps $C_B : \H \rightarrow \H _T$ and $C_{B _T} : \H _T \rightarrow \H$ such that $C_B ^* = C_{B_T}$, $C _B \dom{B} = \dom{B_T}$, $C _{B_T} \dom{B_T} = \dom{B}$,
and $C_B B = B_T C_B$.

There is a bijective correspondence between unitary extensions $U$ of $b(B)$ and positive operator valued measures $Q_U$ on the unit-circle $\T$ which diagonalize $b(B)$, \emph{i.e.} such that for
any $f \in \ker{b(B)} ^\perp$, $$ b(B) f = \int _\T \alpha Q_U (d\alpha ).$$ Indeed if $U$ is a unitary extension of $b(B)$, then $Q _U (\Om ) := P_\H P _U (\Om ) P _\H$ is such a measure diagonalizing $b(B)$,
and conversely given such a measure $Q$, Naimark's dilation theorem provides a unitary extension $U$ on a larger Hilbert space $\K \supset \H$ with the property that  $Q (\Om ) := P_\H P _U (\Om ) P _\H$.
One can then check that the map $Q \mapsto \check{Q}$ defined by $$ \check{Q} (\Om ) := C_B Q (\Om ) C_{B_T}, $$ is a bijective map from the positive operator-valued measures diagonalizing $b(B)$ to those diagonalizing $b(B_T)$. Since
there is a bijection between such measures and extensions $A_T \in \Ext{ B_T}$, this constructs a bijection from extensions $A \in \Ext{B}$ to extensions $A_T \in \Ext{B _T}$.

Now let $\{ u _j \} , \{ v_j \}$ be orthonormal bases of $\ker{B^* -i }$ and $\ker{B^* +i }$ respectively, and let $\check{u} _j = C_B u_j$ and $\check{v} _j = C_B v_j$ be corresponding basis elements for
$\ker{B_T ^* \pm i }$, and suppose that $J_{\pm i} : \C ^n \rightarrow \ker{B^* \mp i }$ are isometries defined by $J_{-i} e_k = v_k$, $J_i e_k = u_k$, and define $\check{J} _{\pm i}$ similarly. Then
$C _B J_{\pm i} = \check{J} _{\pm i }$, and it follows that if
\ba M (A ,B ) (z) & := & \sigma _U (\{ 1 \} ) + \intfty \frac{tz + 1}{t-z} J_i ^* P_A (dt) J_i  \nonumber \\
& = & J_i ^* Q_U (\{ 1 \} )  J_i + \intfty \frac{tz + 1}{t-z} J_i ^*  Q_A (dt)  J_i, \nonumber \ea
where $Q_A (\Om ) := P_\H P_A (\Om ) P _\H$, then this is the suitable generalization of Donoghue's Weyl-Titchmarsh function to the case where $B$ has indices $(n,n)$ and is not necessarily densely defined. Moreover
\ba M (A ,B ) (z) & = &  \check{J} _{-i} ^* C_B Q_U (\{ 1 \} )  C_{B_T} \check{J} _{-i} + \intfty \frac{tz + 1}{t-z} \check{J} _{-i} ^* C_B Q_A (dt) C_{B_T} \check{J}_{-i} \nonumber \\
& =& \check{J} _{-i} P_{U_T} (\{ 1 \} ) \check{J} _{-i} + \intfty \frac{tz + 1}{t-z} \check{J} _{-i} ^*  P_{A_T} (dt)  C_{B_T} \check{J}_{-i} \nonumber \\
& =& i \Phi [B_T ; A_T ] (z), \ea so that $M (A, B) = i \Phi (B_T ; A_T ).$

This relationship between our characteristic function $\Phi [A;B]$ of the extension $A$ relative to $B$ and the Weyl-Titchmarsh function $M (B, A)$ of the pair $(B,A)$, allows one to translate all of our upcoming results
on $\Phi [A;B]$ and its relationship to $\Theta _B$ into equivalent statements about $M(B ;A )$.
\end{remark}

\begin{thm}
    If $\wt{\Phi} _A$ is the contractive analytic function with Herglotz function $\pi G_{\Phi _A}$, then $\K _A = \L (\wt{\Phi} _A )$.
\end{thm}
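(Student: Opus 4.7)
The plan is to prove $\K_A = \L(\wt{\Phi}_A)$ by showing that these two reproducing kernel Hilbert spaces have the same reproducing kernel; the Moore--Aronszajn theorem then forces the spaces to coincide as Hilbert spaces of functions. By construction the kernel of $\K_A$ is $K_w(z) = \Omega_A(z)^* \Omega_A(w)$. On the other side, by the definition of a Herglotz space together with the hypothesis $G_{\wt\Phi_A} = \pi G_{\Phi_A}$, the kernel of $\L(\wt\Phi_A)$ simplifies from $\frac{i}{\pi}(G_{\wt\Phi_A}(z)+G_{\wt\Phi_A}(w)^*)/(z-\bar w)$ to $i(G_{\Phi_A}(z)+G_{\Phi_A}(w)^*)/(z-\bar w)$. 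So the theorem reduces to the identity
\begin{equation*}
\Omega_A(z)^*\Omega_A(w) \;=\; i\cdot\frac{G_{\Phi_A}(z)+G_{\Phi_A}(w)^*}{z-\bar w}.
\end{equation*}

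I would verify this identity by a direct computation using the spectral theorem for the unitary extension $U = b(A)$ of $b(B)$. In the generic case $1\notin\sigma_p(U)$ one has $\Omega_A(z) = (A+i)(A-\bar z)^{-1}J$, and the spectral theorem yields $\Omega_A(z)^*\Omega_A(w) = \int_\R \frac{t^2+1}{(t-z)(t-\bar w)} J^* P_A(dt) J$. In the exceptional case $1 \in \sigma_p(U)$, formula (\ref{except}) shows that $\Omega_A(z)$ picks up an additional atomic term from the spectral projection $E(\{1\}) = P_U(\{1\})$, contributing an extra summand $J^*E(\{1\})J$ to $\Omega_A(z)^*\Omega_A(w)$.

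On the Herglotz side, starting from
\begin{equation*}
G_{\Phi_A}(z) = -iz\,\sigma_U(\{1\}) + \int \frac{tz+1}{i(t-z)}\,d\tilde\sigma(t),
\end{equation*}
with $\tilde\sigma = \sigma_U\circ b$, the key algebraic identity
\begin{equation*}
(tz+1)(t-\bar w) - (t\bar w+1)(t-z) = (z-\bar w)(t^2+1)
\end{equation*}
collapses $G_{\Phi_A}(z)+G_{\Phi_A}(w)^*$ into $-i(z-\bar w)\bigl[\sigma_U(\{1\}) + \int \frac{t^2+1}{(t-z)(t-\bar w)}d\tilde\sigma(t)\bigr]$. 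Dividing by $(z-\bar w)$ and multiplying by $i$ gives an $n\times n$ matrix-valued expression whose atomic and integral parts must be matched to $K_w(z)$ via the definitions $\sigma_U(\Omega) = \pi J^*P_U(\Omega)J$ and $\Sigma_A(\Omega) = \int_\Omega \pi(1+t^2) J^*P_A(dt)J$ from Section \ref{Herglotz}.

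The main obstacle I anticipate is the bookkeeping for the exceptional case together with the various factors of $\pi$ built into the definitions of $\sigma_U$, $\Sigma_A$, and the Herglotz kernel $\frac{i}{\pi}\cdot$. In fact, the whole reason for introducing the rescaled $\wt\Phi_A$ rather than using $\Phi_A$ directly is precisely to absorb a factor-of-$\pi$ mismatch between the chosen normalization of the spectral measure $\sigma_U$ and the prefactor $\frac{i}{\pi}$ appearing in the definition of the reproducing kernel of a Herglotz space. One must verify carefully that this rescaling produces genuine equality of kernels rather than mere proportionality; once that is done, the two RKHS coincide as sets of functions and their inner products agree, yielding $\K_A = \L(\wt\Phi_A)$.
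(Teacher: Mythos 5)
Your proposal is correct and takes essentially the same route as the paper's own proof: both establish $\K _A = \L (\wt{\Phi} _A)$ by showing the two reproducing kernels coincide, computing $\Omega _A (z)^* \Omega _A (w)$ spectrally via the general formula (\ref{except}) (so the atom of $P_U$ at $1$ is handled uniformly), and identifying the result with the Herglotz-space kernel of $\wt{\Phi} _A$ — the paper routes this last step through the measure form (\ref{Hergkern}) rather than your partial-fraction collapse of $G_{\Phi _A}(z)+G_{\Phi _A}(w)^*$, but that is the same computation. The matching you flag does close as an exact equality, not mere proportionality: the factor $\pi$ in $\wt{\Phi} _A$ exactly absorbs the $\tfrac{1}{\pi}$ normalization in the Herglotz kernel, provided one uses the normalization $\sigma _U (\Omega ) = J^* P_U (\Omega ) J$ that the paper actually works with (the extra $\pi$ in its displayed definition of $\sigma _U$ is a slip, since $\sigma _U$ must be a unital probability measure for $\Phi _A (i) = 0$ and for $\sigma _A = \sigma _U \circ b$ to hold).
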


In particular if $U = b(A)$ does not have $1$ as an eigenvalue then $\K _A$ is the space of Cauchy transforms of the positive operator-valued measure $\pi \Sigma _A$.

\begin{proof}
    Let $\wt{\Phi} := \wt{\Phi} _A$.  It suffices to show that $K_w (z) = K_w ^{\wt{\Phi}} (z)$ where $K _w (z) = \Om (z ) ^* \Om (w)$ is the reproducing kernel for $\K _A$.
First $$ K_w (z) = \Om (z) ^* \Om (w) = J^* U_{-i, z} ^* U_{-i ,w} J,$$ where $U_{-i ,z}$ is given by equation (\ref{except}) so that
\ba  K_w (z) & = & 4 J^*  \left( (i+z) U + (i-z) \right) ^{-1} \left( (\ov{w} -i) U^* - (\ov{w} +i ) \right) ^{-1} J  \nonumber \\
& =&   \frac{ 4 \sigma _U ( \{ 1 \} )}{\left( (i+z) + (i-z) \right) \left( (\ov{w} -i) - (\ov{w} +i ) \right)} + \nonumber \\
& & + \frac{4}{\pi} \int _{\T \sm \{ 1 \} } \frac{1}{ (i+z) \alpha + (i-z) } \frac{1}{ (\ov{w} -i) \ov{\alpha} - (\ov{w} +i ) } \sigma _U (d\alpha ) \nonumber \\
& =&  \sigma _U ( \{ 1 \} ) + \frac{1}{\pi ^2 } \intfty \frac{1}{(t-z) (t-\ov{w} ) }  \pi \Sigma _A (dt) \nonumber \\
& =& K_w  ^{\wt{\Phi}} (z), \nonumber \ea where the last equality follows from equation (\ref{Hergkern}) and the definition of $\wt{\Phi} _A$.
\end{proof}

Now let us compute the Livsic characteristic function of the operator $\mf{Z} := \mf{Z} _{\wt{\Phi} _A} \in \sym{n}{\L (\wt{\Phi} _A ) }$ which acts as multiplication by
the independent variable in $\L (\wt{\Phi} _A) = \K _A$. We have
$$ \{ u_j := K_{-i} K_{-i} (-i) ^{-1/2} e_j \} \quad \quad \mbox{orthonormal basis of} \ \ker{\mf{Z} ^* -i} , $$
$$ \{ v_j := K_{i} K_{i} (i) ^{-1/2} e_j \} \quad \quad \mbox{orthonormal basis of} \ \ker{\mf{Z} ^* +i} , $$
$$ \{ w_j (z) = K_{\ov{z}} e_j \} \quad \quad \mbox{basis of} \  \ker{\mf{Z} ^* -z}. $$

Note that $K_i (i) = K_{-i} (-i) = J^* J = \bm{1}$. By Section \ref{alternate}, we can compute the Livsic characteristic function of $\mf{Z}$ in two ways:

We have that
$$ D (z) = \left[ \ip{w_j (z)}{u_k} \right] = \left[ \ip{ K_{\ov{z}} e_j }{ K_{-i} K_{-i} (-i) ^{-1/2} e_k } \right] =  K_{\ov{z}} (-i) , $$
and
$$ C (z) = \left[ \ip{w_j (z)}{v_k} \right] = K_{\ov{z}} (i). $$ Similarly
$$ \wt{D}(z) = \left[ \ip{v_j}{w_j (\ov{z})} \right] = K_i (z),$$ and
$$ \wt{C} (z) =  \left[ \ip{u_j}{w_j (\ov{z})} \right] =  K_{-i} (z). $$ Livsic's theorem implies that the functions

\be \La _A (z) := b(z) D(z) ^{-1} C(z) \quad \quad \mbox{and} \quad \quad \wt{\La} _A (z) := b(z) \wt{D} (z) ^{-1} \wt{C}(z) ,\ee

are both contractive and equal (modulo multiplication to the left and right by fixed unitaries) to the Livsic characteristic function $\Theta _{\mf{Z}}$ of $\mf{Z}$. Recall that the Livsic characteristic function is only defined up to
unitary coincidence, so this means that there are fixed unitary matrices $U,V$ such that
$$U \La _A V = \wt{\La} _A.$$ Explicitly we have
\be \wt{\La}  _A (z) =  b(z)  K _i (z) ^{-1} K_{-i} (z)  \quad \quad \mbox{and} \quad  \La  _A (z) = b(z) K_{\ov{z} } (-i) ^{-1} K _{\ov{z}} (i ). \ee

\begin{thm}
The contractive analytic functions $\La _A$ and $\wt{\La } _A$ are both equal to $\Phi _A = \Phi [B ; A ]$. \label{FMshift}
\end{thm}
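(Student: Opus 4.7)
The plan is to compute $\La_A(z)$ and $\wt{\La}_A(z)$ directly from the explicit form of the reproducing kernel of $\K_A = \L(\wt\Phi_A)$ and show that both expressions reduce to $\Phi_A(z)$ on the nose, with no residual unitary factor.

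First I would record the basic data. Since by construction $G_{\wt\Phi_A} = \pi\, G_{\Phi_A}$, writing $G := G_{\Phi_A}$ the reproducing kernel formula from Section \ref{Herglotz} simplifies to
$$ K_w^{\wt\Phi_A}(z) \;=\; i\,\frac{G(z) + G(w)^*}{z - \ov{w}}. $$
The paragraph preceding the theorem shows $\Phi_A(i)=0$, hence $G(i) = \bm 1$, and the standard antisymmetry $G(\ov w)^* = -G(w)$ then gives closed-form evaluations of the four kernel quantities in the definitions of $\La_A$ and $\wt\La_A$:
$$ K_i(z) = i\,\frac{G(z)+\bm 1}{z+i},\quad K_{-i}(z) = i\,\frac{G(z)-\bm 1}{z-i},\quad K_{\ov z}(-i) = i\,\frac{\bm 1 + G(z)}{z+i},\quad K_{\ov z}(i) = i\,\frac{\bm 1 - G(z)}{i-z}. $$

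Second, I would substitute into
$$ \wt\La_A(z) = b(z)\,K_i(z)^{-1}K_{-i}(z), \qquad \La_A(z) = b(z)\,K_{\ov z}(-i)^{-1}K_{\ov z}(i), $$
and use $b(z) = (z-i)/(z+i)$. In both cases the Blaschke factor cancels the denominators introduced by the kernel values (and the $i$'s cancel), leaving
$$ (G(z)+\bm 1)^{-1}(G(z)-\bm 1), $$
which by definition is $\Phi_A(z)$. Note that the two matrix factors commute, so no left/right ordering issue arises. Because the chosen bases are the unit-normalized $K_{\pm i}(\pm i)^{-1/2}e_j = e_j$ (using $K_{\pm i}(\pm i) = J^*J = \bm 1$ from $J$ isometric), no spurious unitary coincidence factors appear.

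The only real obstacle is sign-bookkeeping when invoking $G(\ov w)^* = -G(w)$; once that is handled correctly the computation is mechanical. As a conceptual cross-check, Lemma \ref{FS} says the Livsic function of $\mf Z_{\wt\Phi_A}$ is the Frostman shift of $\wt\Phi_A$; since $\wt\Phi_A(i) = \tfrac{\pi-1}{\pi+1}\bm 1$ is a real scalar multiple of the identity, the Frostman shift formula collapses to a scalar Möbius transformation which is exactly the inverse of the map $G_{\Phi_A}\mapsto \pi G_{\Phi_A}$ on the Herglotz side, returning $\Phi_A$. Either route yields $\La_A = \wt\La_A = \Phi_A = \Phi[A;B]$, completing the proof.
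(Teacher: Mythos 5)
Your computation is correct: with $G_{\wt\Phi_A}=\pi G_{\Phi_A}$ the kernel of $\K_A$ is $K_w(z)=i\,\tfrac{G(z)+G(w)^*}{z-\ov w}$, and your four closed-form evaluations (using $G(i)=\bm 1$ from $\Phi_A(i)=0$ and $G(\ov w)^*=-G(w)$) do make both $b(z)K_i(z)^{-1}K_{-i}(z)$ and $b(z)K_{\ov z}(-i)^{-1}K_{\ov z}(i)$ collapse to $(G(z)+\bm 1)^{-1}(G(z)-\bm 1)=\Phi_A(z)$, with the Blaschke factor and the sign from $\tfrac{z-i}{i-z}$ cancelling exactly, so no residual unitary appears. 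The route differs slightly from the paper's: there, $\La_A=\Phi_A$ is obtained not by direct substitution but by observing that $K_w^{\wt\Phi_A}=\pi K_w^{\Phi_A}$ identifies $\La_A$ as the Livsic function of $\mf Z_{\Phi_A}$, so Lemma \ref{FS} gives the Frostman shift of $\Phi_A$ vanishing at $i$, which is $\Phi_A$ itself since $\Phi_A(i)=0$; the identity $\wt\La_A=\La_A$ is then checked by the same antisymmetry computation you perform. Your inlined calculation buys a fully explicit verification of exact equality (sidestepping the ``ignore the factor of $-1$ / up to unitary coincidence'' caveat in the proof of Lemma \ref{FS}, which is harmless here precisely because $G(i)=\bm 1$), while the paper's argument reuses the lemma and is shorter; your concluding cross-check via the Frostman shift of $\wt\Phi_A$ at $\wt\Phi_A(i)=\tfrac{\pi-1}{\pi+1}\bm 1$ is also correct, since that shift rescales the Herglotz function by $\tfrac{1-c}{1+c}=\tfrac1\pi$ and returns $\Phi_A$.
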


\begin{proof}
Since $\La _A$ is the characteristic function of $\mf{Z} _{\wt{\Phi}  _A}$, and since $K_w ^{\wt{\Phi} _A} (z) = \pi K_w ^{\Phi _A } (z)$, it follows that
$$ \La _A (z) = b(z) K_{\ov{z}} ^{\Phi _A}  (-i) ^{-1} K_{\ov{z} } ^{\Phi _A } (i).$$ This shows that $\La _A$ is the Livsic characteristic function of $\mf{Z} _{\Phi _A}$, and Lemma \ref{FS} of Section \ref{Herglotz}
implies that $\La _A$ is the Frostman shift of $\Phi _A$ which vanishes at $i$. However since $\Phi _A (i ) = 0$, this Frostman shift is just equal to $\Phi _A$ and $\Phi _A = \La _A$. 

Now,
$\wt{\La} _A (z) =  b(z)  K _i (z) ^{-1} K_{-i} (z)$. Using that $G_A (\ov{z} ) ^* = -G_A (z)$, one can calculate that
\ba b(z) K_i (z) ^{-1} K_{-i} (z) & = & (G_A (z) +G_A(i) ^* ) ^{-1} (G_A(z) - G_A (i) ) \nonumber \\
&  = &  b(z) K_{\ov{z}} (-i) ^{-1} K _{\ov{z}} (i) = \La _A (z). \nonumber \ea
This shows that $\wt{\La} _A (z) =  \La _A (z) $.
\end{proof}

\begin{thm}
Given any $A \in \Ext{B}$, we have that $\Phi _A \geq  \Theta _B$, \emph{i.e.} $\Theta _B (z) ^{-1} \Phi _A (z)$ is a contractive analytic function in $\C _+$. \label{extchar}
\end{thm}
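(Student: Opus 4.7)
The plan is to derive the inequality $\Theta _B \leq \Phi _A$ directly from the positive-kernel condition $K _w ^\Ga - k _w ^\Ga \geq 0$ which encodes the isometric inclusion $\H _A \subset \K _A$ of the previous section, after rewriting the Herglotz-space reproducing kernel $K _w ^\Ga$ in a form parallel to the explicit formula (\ref{rkgamma}) for $k _w ^\Ga$.

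The key algebraic step I will carry out is the identity
\be
K _w ^\Ga (z) \; = \; B(z) \, \frac{\bm{1} - \Phi _A (z) \Phi _A (w) ^*}{1 - b(z) \ov{b(w)}} \, B(w) ^* ,
\ee
parallel to (\ref{rkgamma}) but with $\Theta _B$ replaced by $\Phi _A$.  Starting from $K _w ^\Ga (z) = i (G _{\Phi _A}(z) + G _{\Phi _A}(w) ^* )/(z - \ov{w})$ (since $\K _A = \L (\wt{\Phi} _A)$, which has Herglotz function $\pi G _{\Phi _A}$, by Theorem \ref{FMshift}) and using the Hermitian identity $(\bm{1} - \Phi(z))(G_\Phi(z) + G_\Phi(w)^*)(\bm{1}-\Phi(w)^*) = 2(\bm{1} - \Phi(z)\Phi(w)^*)$, the kernel factors as
$$ K _w ^\Ga (z) \; = \; R(z) \, \frac{\bm{1} - \Phi _A (z) \Phi _A (w) ^*}{1 - b(z) \ov{b(w)}} \, R(w) ^*, \qquad R(z) := \frac{2}{z+i}(\bm{1} - \Phi _A (z)) ^{-1}. $$
A direct computation using $B(z) = J^*(A-i)(A-z)^{-1} J$ (where $A$ denotes the self-adjoint extension in $\K$), the expression $\Phi _A (z) = b(z) K _{\ov{z}} ^\Ga (-i)^{-1} K _{\ov{z}} ^\Ga (i)$ from Theorem \ref{FMshift}, the elementary operator identity $(z+i)(A-i) - (z-i)(A+i) = 2i(A-z)$, and $J^* J = \bm{1}$, then yields the clean equality $R(z) = -i \, B(z)$; the factors of $i$ in $R(z) R(w)^*$ cancel to give the desired formula.

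Subtracting the two kernel formulas produces
$$ K _w ^\Ga (z) - k _w ^\Ga (z) \; = \; B(z) \, \frac{\Theta _B (z) \Theta _B (w) ^* - \Phi _A (z) \Phi _A (w) ^*}{1 - b(z) \ov{b(w)}} \, B(w) ^* \; \geq \; 0 . $$
On $\Pi _\Ga ^+$, where $B(z)$ is invertible by Proposition \ref{qap}, dividing out the $B$-factors and factoring $\Theta _B (z) \Theta _B (w) ^* - \Phi _A (z) \Phi _A (w) ^* = \Theta _B (z) [\bm{1} - \Psi (z) \Psi (w) ^*] \Theta _B (w) ^*$ with $\Psi := \Theta _B ^{-1} \Phi _A$ yields the Schur-kernel positivity $(\bm{1} - \Psi (z) \Psi (w) ^*)/(1 - b(z) \ov{b(w)}) \geq 0$.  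By the standard correspondence between positive Schur kernels on $\C _+$ and contractive multipliers, $\Psi$ coincides on $\Pi _\Ga ^+$ with a contractive analytic function on $\C _+$; since $\Theta _B$ is inner, the possibly isolated singularities of $\Theta _B ^{-1} \Phi _A$ in $\C _+$ are removable by boundedness, so $\Psi$ extends to a contractive analytic function on all of $\C _+$.  This is precisely the inequality $\Theta _B \leq \Phi _A$.

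The main obstacle is the verification of the identity $R(z) = -i B(z)$, which collapses the Herglotz-space algebra to the model-space form and makes the comparison with $k _w ^\Ga$ immediate.  Once this identity is in hand, the remainder of the argument is routine reproducing-kernel algebra combined with the standard Schur-multiplier characterization and a removable-singularity argument on $\Theta _B ^{-1} \Phi _A$.
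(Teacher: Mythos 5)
Your proposal is correct and takes essentially the same route as the paper: both arguments compare the reproducing kernels of $\H _A \subset \K _A$, identify the common outer factor $B(z) = k_i (z) = K_i (z)$ (your identity $R(z) = -i B(z)$ is exactly this identification, since $\Phi _A (i) = 0$ gives $K_i (z) = \tfrac{2i}{z+i} (\bm{1} - \Phi _A (z)) ^{-1}$), and then deduce $\Theta _B (z) \Theta _B (z) ^* \geq \Phi _A (z) \Phi _A (z) ^*$ on a dense set, hence contractivity of $\Theta _B ^{-1} \Phi _A$. The paper carries this out on the diagonal of the kernels using the formulas of Section \ref{RKform} for the canonical model of $\mf{Z} _{\wt{\Phi} _A}$, whereas you use the off-diagonal Schur-type factorization and make the removable-singularity step explicit; the difference is only in bookkeeping.
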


\begin{proof}
    Consider again the symmetric linear transformation $\mf{Z} $ which acts as multiplication by $z$ in $\L (\wt{\Phi} _A ) = \K _A$, where $\wt{\Phi}$ is the contractive analytic function
corresponding to the measure $\pi \Sigma _A$. We can construct a canonical model for $\mf{Z}$ by
choosing $\J := \C ^n$ with orthonormal basis $\{ e_j \}$ and defining $$ \Gamma (z) := K _z e_j , $$ where $K_z (w)$ is the reproducing kernel for $\K _A$.
If we do this we find that $ (\K _A ) _\Gamma = \K _A$ and that $U _\Gamma $ is just the identity on $\K _A$. Hence it follows from Section \ref{RKform}, and in fact from \cite{AMR},
that we can express the reproducing kernel for $\K _A$ as
$$ K_z (z) = \frac{ K_i (z) K_i (i) ^{-1} K_i (z) ^* - |b(z)|^2 K_{-i} (z) K_{-i} (-i) ^{-1} K_{-i} (z) ^* }{1-|b(z) | ^2 } .$$  We can write this as
$$(1- |b(z) | ^2)  K _z (z) =  \wt{D}(z) \wt{D} (z) ^* - |b (z) |^2 \wt{C} (z) \wt{C} (z) ^*, $$ where $\Phi _A (z) = \La _A (z) = \wt{\La} _A (z) = b(z )\wt{D} (z) ^{-1} \wt{C} (z)$, and $\wt{D} (z) = K _{i} (z) $.

Also note that if $k_w (z)$ is the reproducing kernel for $\H _A$, then for any $z \in \Pi _A ^+$ (which is dense in $\C _+$) we can write
$$ (1- |b(z) | ^2 ) k_z (z) = B(z) B(z) ^* - | b(z) |^2 A(z)  A(z)^*, $$ where $\Theta _B (z) = b(z) B(z) ^{-1} A(z)$ and $B(z) = k_{i} (z) = \Gamma (z) ^* \Gamma (i) = \Gamma (z) ^* J  = \Om (z) ^* \Om (i) = K _{i} (z)$. Hence $B(z) =  \wt{D} (z)$.

Now since $A \in \Ext{B}$, $\H_A $ is isometrically contained in $\K _A$ so that $  K_z (z) -  k_z (z) \geq 0$. Hence we have that

\be  \wt{D}(z) \wt{D} (z) ^* - |b(z)| ^2 \wt{C}(z) \wt{C}(z) ^*   \geq   \wt{D} (z)  \wt{D} (z) ^* - |b(z) |^2 A(z) A(z) ^*. \ee
so that
$$ A(z) A(z) ^* \geq  \wt{C} (z) \wt{C} (z) ^*,$$
and hence
$$ B^{-1} (z) A(z) A(z) ^* B^{-1} (z) ^* \geq \wt{D} (z) ^{-1} \wt{C} (z) \wt{C} (z) ^* (\wt{D} (z) ^*) ^{-1}.$$
Since $\wt{\La} _A (z) = \La _A (z) = \Phi _A (z) $, this shows that
\be \Theta _B (z) \Theta _B (z) ^* \geq \Phi _A (z) \Phi _A (z) ^*, \ee proving the theorem.

\end{proof}

\begin{eg}
\label{fdeg}
Consider the finite dimensional partial isometry $V$:
$$ V := \left( \begin{array}{cc} 0 & 0 \\ 1 & 0 \end{array} \right).$$ Clearly $V  \in \ms{V} _1  (\C ^2 )$.

Now let $$ U := \left( \begin{array}{ccc} 0 & 3/5 & 4/5 \\ 1 & 0 & 0 \\ 0 & 4/5 & -3/5  \end{array} \right).$$ This is a unitary
matrix acting on $\C ^3$, and $U| _{\ker{V} ^\perp} = V | _{\ker{V} ^\perp}$ so that $V \subseteq U$ and so if $B := b^{-1} (B) \in \sym{1}{\C ^2}$ then we
have that $A := b^{-1} (U) \in \Ext{B}$.

Our goal is to calculate $\Phi _A$ and to verify that $\Phi _A \geq \Theta _B$.

First we calculate $\Theta _B$, the characteristic function of $B = b^{-1} (V) = i (1+V)(1-V) ^{-1}$. We will denote the standard bases of $\C ^n$ by $\{ e_k \}$. Now
$$ \ker{B^* -i } = \ker{V} = \bigvee \{ e_2 \} \quad \mbox{and} \quad \ker{B^* +i} = \ran{V} ^\perp = \bigvee \{ e_1 \}. $$
Note that to avoid writing column vectors we will write $(a, b) ^T$ to denote the transpose of the row vector $(a , b)$,
and sometimes we will omit the $T$ in our calculations.

To calculate the Livsic characteristic function we also need to determine $\ker{B^* -z}$. First we calculate $\ran{B-z}$:
$$ \ran{B-z} = i(1+V)\ker{V} ^\perp -z (1-V ) \ker{V} ^\perp = \left( (i-z) + (i+z) V \right) \ker{V} ^\perp. $$
Since $\ker{V} ^\perp $ is spanned by $e_1$ and $V e_1 = e_2$, we get that $\ran{B-z}$ is spanned by
$$ \left( (i-z) , (i+z ) \right) ^T.$$ It follows that if $(c, d) ^T \in \ker{B^* -\ov{z} }$, that
$$ ( \ov{c} , \ov{d} ) \cdot (i-z , i+z ) = 0, $$ and this shows that $\ker{B^* -z}$ is spanned by
$$ w(z) :=  (z-i, z+i) ^T.$$

Finally $$ \Theta _B (z) = b(z) \frac{ \left( w(z), e_1 \right) }{\left( w(z) , e_2 \right)} = \left( \frac{z-i}{z+i} \right) ^2.$$

To calculate $\Phi _A$, we first need to calculate the projection-valued measure of $U$. We begin by calculating the eigenvalues and eigenvectors of $U$: We have
$$ \mbox{det}\left( \la - U \right) = \la ^3 + 3/5 \la ^2 -3/5 \la -1 = (\la - \la _1 ) (\la - \la _2 ) (\la - \la _3 ), $$
where $\la _1 =1$, $\la _2  := -4/5 +i 3/5 =: \beta $ and $\la _3 = \ov{\la _2} = \ov{\beta}$.  A normalized eigenvector for $\la _1 =1$ is:
$$\hat{b} _1 := ( 2/3 , 2/3, 1/3 ) ^T, $$ and (non-normalized) eigenvectors for $\beta , \ov{\beta}$ are:
$$ \vec{b} _2 = (1, \beta , 5/4 (\beta ^2 - 3/5) \beta ) ^T, $$ and
$$ \vec{b} _3 = (1, \ov{\beta} , 5/4 (\ov{\beta} ^2 - 3/5) \ov{\beta} ) ^T.$$
It follows that the projection-valued measure of $U$ is given by
$$ P_U =  \sum _{i=1} ^3 \left( \cdot , \hat{b} _i \right)\hat{b} _i \delta _{\la _i}, $$
where the $\delta _{\la _i}$ are Dirac point measures of weight one at the points $\la _i$, and the $\hat{b} _i$ are normalized eigenvectors to the eigenvalues $\la _i$.  Now the scalar measure $\sigma _U$ which determines
$\phi _U$, where $\Phi _A = \phi _U \circ b$, is given by
$$ \sigma _U (\Om ) = \ip{v}{P _U (\Om ) v}, $$ where $v = e_1$ is a unit vector spanning $\ran{V} ^\perp = \ker{B^* +i }$. Hence
$$  \sigma _U (\Om ) =  \sum _{k=1} ^3 | \left( e_1 , \hat{b} _k \right) | ^2 \delta _{\la _k}.$$
Now $\left( e_1 , \hat{b} _1 \right) = 2/3$, and since $\vec{b} _2 = C \vec{b_3}$ is the component-wise complex conjugate of $\vec{b} _3$, it follows that
$ | \left( e_1 , \hat{b} _2 \right) | ^2 =  | \left( e_1 , \hat{b} _3 \right) | ^2 =: a$. Finally since $P _U$ is unital, $\sigma _U$ must be a probability measure:
$$1 = \sum _{k=1} ^3  | \left( e_1 , \hat{b} _k \right) | ^2  = 4/9 + 2 a,$$ proving that $a = 5 /18$. In conclusion,
$$ \sigma _U = \frac{4}{9} \delta _1 + \frac{5}{18} \delta _\beta + \frac{5}{18} \delta _{\ov{\beta}}, $$ where $\beta = -4/5 +i 3/5$.
It follows that
$$ g _{\phi _U} (w) = \int _{\T} \frac{\alpha +w}{\alpha -w} \sigma _U (d\alpha ), $$
$$ G _{\Phi _A } (z) = -i \sigma _U ( \{ 1 \} ) z + \intfty \left( \frac{zt +1}{i (t-z)} \right) \wt{\sigma} _U (dt), $$
where $\wt{\sigma} _U := \sigma _U \circ b$.
An easy calculation shows that $b^{-1} (\beta ) = 1/3$ and $b^{-1} (\ov{\beta} ) = -1/3$, and so it follows that
$$ G _{\Phi _A } (z) = -i \frac{4}{9} z + \frac{5}{18} \frac{z/3 + 1}{ 1/3 -z} + \frac{5}{18} \frac{z/3 -1}{ 1/3 +z}. $$
Notice that $G _{\Phi _A } (i ) = 1$ as expected. Hence
\ba \Phi _A (z) & = & \frac{\left( \frac{4}{9} + \frac{5}{18} \frac{z+3}{1-3z} + \frac{5}{18} \frac{z-3}{1+3z} \right) -i }{\left( \frac{4}{9} + \frac{5}{18} \frac{z+3}{1-3z} + \frac{5}{18} \frac{z-3}{1+3z} \right) +i}. \nonumber \\
& =& \frac{ z (1-3z) (1+3z) +\frac{5}{8} \left( (z+3) (1+3z) + (z-3) (1-3z) \right) -i \frac{9}{4} (1-3z) (1+3z) }{ z (1-3z) (1+3z) +\frac{5}{8} \left( (z+3) (1+3z) + (z-3) (1-3z) \right) + i \frac{9}{4} (1-3z) (1+3z)}.
\nonumber \ea
The numerator simplifies to
$$ n(z) = -9z^3 + i \frac{81}{4} z^2 +\frac{27}{2} z -i\frac{9}{4}. $$ Let
$p(z) = \frac{n(z)}{-9} = z^3 -i\frac{9}{4} z^2 -\frac{3}{2}  z + \frac{i}{4}$. It follows that $\Phi _A (z)$ is the product of three Blaschke factors, one for each of the roots of $p(z)$.
It is easy to calculate that $p(z)$ factors as $p(z) = (z-i) ^2 (z-\frac{i}{4})$, and so (up to a unimodular constant),
$$ \Phi _A (z) =  \frac{(z-i) ^2 (z-i/4)}{(z+i)^2 (z+ i/4)}, $$ which is indeed greater or equal to
$$ \Theta _B (z) =  \left( \frac{z-i}{z+i} \right) ^2.$$

\end{eg}

\begin{defn}
    We say that $A_1  \sim A_2 $ if $\Phi _{A _1} = \Phi _{A _2}$. This is clearly an equivalence relation.
Let $\ext{B} := \Ext{B} / \sim $. That is $\ext{B}$ is the set of all $\sim$ equivalence classes of $\Ext{B}$.
\end{defn}

Suppose that $A_1, A_2 \in \Ext{B}$ are such that $A_k = b^{-1} (U_k )$ for $U_k \in \Ext{b(B)}$ which do not have $1$ as an eigenvalue. Then:
\begin{thm}
    $A_1  \sim A_2 $ if and only if $ A_1 \simeq A_2 $ via a unitary $U$ whose restriction to $\H$ is the identity. \label{equivalence}
\end{thm}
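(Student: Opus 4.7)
The plan is to establish both implications by reformulating $\Phi_{A_1}=\Phi_{A_2}$ as equality of matrix-valued spectral measures, then using cyclicity of $\ker{B^*+i}$ from Corollary \ref{innercyclic} (which requires $\Theta_B$ inner) to transfer this to an operator-theoretic identification.

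First I would note that, because $1\notin\sigma_p(U_k)$, the atomic term $\sigma_{U_k}(\{1\})$ in the Herglotz representation of $G_{\Phi_{A_k}}$ vanishes, so $\Phi_{A_k}$ is uniquely determined by $\Sigma_{A_k}(\Omega)=\int_\Omega\pi(1+t^2)J^*P_{A_k}(dt)J$. By the bijective correspondence between contractive analytic functions and their Herglotz data from Section \ref{Herglotz}, the condition $\Phi_{A_1}=\Phi_{A_2}$ is then equivalent to the equality of the $\C^{n\times n}$-valued spectral measures $J^*P_{A_1}J=J^*P_{A_2}J$ on $\R$. The converse direction is now almost automatic: if $A_1\simeq A_2$ via a unitary $\tilde U:\K_1\to\K_2$ fixing $\H$ pointwise, then $\tilde UJ=J$ (as $\ran{J}\subset\H$), and sandwiching $P_{A_2}(\Omega)=\tilde UP_{A_1}(\Omega)\tilde U^*$ between $J^*$ and $J$ yields $J^*P_{A_2}J=J^*P_{A_1}J$, hence $\Phi_{A_1}=\Phi_{A_2}$.

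For the forward direction, assuming $J^*P_{A_1}J=J^*P_{A_2}J$, I would construct $\tilde U:\K_1\to\K_2$ by declaring
$$\tilde U\Big(\sum_k f_k(A_1)Jv_k\Big):=\sum_k f_k(A_2)Jv_k$$
for bounded Borel functions $f_k$ and vectors $v_k\in\C^n$. By the spectral theorem, the inner product $\ip{f_k(A)Jv_k}{f_l(A)Jv_l}$ is an integral of $\overline{f_l}f_k$ against the scalar measure $(J^*P_A(\cdot)Jv_k,v_l)_{\C^n}$, so the assumed equality of the matrix measures makes $\tilde U$ well defined and isometric on this dense subspace. Cyclicity of $\ker{B^*+i}=\ran{J}$ for each $A_k$ makes the domain dense in $\K_1$ and the range dense in $\K_2$, so $\tilde U$ extends to a unitary, and the standard functional-calculus calculation gives $\tilde UA_1=A_2\tilde U$.

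The main obstacle is verifying $\tilde U|_\H=I_\H$. Here I would invoke the explicit cyclic expansion of Corollary \ref{innercyclic} with $w=-i$: for any $h\in\H$,
$$h=\sum_{j=0}^\infty b(A)^j c_j,\qquad c_j:=P_{-i}\bigl(b_{-i}(B)Q_{-i}\bigr)^j h\in\ker{B^*+i},$$
where crucially the coefficients $c_j$ depend only on $B$ and $h$ and not on the choice of $A\in\Ext{B}$ (this is exactly the content of Remark \ref{Acyclic}). Writing $c_j=J\tilde c_j$ with $\tilde c_j\in\C^n$ and applying the continuous map $\tilde U$ termwise, one obtains $\tilde Uh=\sum_j b(A_2)^j c_j$; but the right-hand side equals $h$ by the same cyclic formula applied now to $A_2$. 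The whole argument thus rests on this $A$-independence of the coefficients $c_j$, which is the technical heart of the equivalence.
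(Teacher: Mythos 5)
Your proposal is correct and follows essentially the same route as the paper: reduce $\Phi_{A_1}=\Phi_{A_2}$ to equality of the measures $J^*P_{A_1}(\cdot)J=J^*P_{A_2}(\cdot)J$, use cyclicity of $\ker{B^*+i}$ (Corollary \ref{innercyclic}, valid since $\Theta_B$ is inner) to build the functional-calculus unitary $f_k(A_1)v_k\mapsto f_k(A_2)v_k$, and handle the easy direction by sandwiching the spectral measures using $UJ=J$. Your explicit verification that the unitary fixes $\H$, via the $A$-independence of the expansion coefficients in Corollary \ref{innercyclic}/Remark \ref{Acyclic}, is exactly the ingredient the paper leaves implicit, so this is a faithful (slightly more detailed) version of the same argument.
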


The above result is easily extended to include the exceptional case where one (or both) $A_1, A_2$ are defined using $U_1 ,U_2 \in \Ext{b(B)}$ where $1$ is an eigenvalue
of either $U_1$ or $U_2$. Namely the statement of the theorem becomes: Suppose $A _1, A_2 \in \Ext{B}$ are defined using $U_1, U_2 \in \Ext{b(B)}$. Then $A_1 \sim A_2$ if and only
if $U_1 \simeq U_2$ via a unitary $U$ which fixes $\H$.

\begin{proof}
    If such a unitary $U$ exists then
$$ \Sigma _1 (\Om ) := \Sigma _{A_1} (\Om) = \int _\Om \pi (1 +t^2) J^* P_1 (dt) J, $$
and
\ba J ^* P_1 (dt) J & = & J ^* U^*U P_1 (dt) J \nonumber \\
&  = & J ^* U^* P_2 (dt) U  J \nonumber \\
& = & J^*  P_2 (dt) J, \ea since $UJ = J$ as $U | _\H = \bm{1} _\H$.
It follows that $\Sigma _1 = \Sigma _2 $ which implies $\Phi _1 = \Phi _2$.

Conversely suppose that $\Phi _1 = \Phi _{A_1} = \Phi _{A_2} = \Phi _2$.
It follows then that $\Sigma _1 = \Sigma _2$ so that $$ J^* P_1 (\Om) J =  J^*  P_2 (\Om) J.$$ It follows that for any bounded Borel function $g$ on $\R$, $$ J^* g(A_1) J = J^* g(A_2) J.$$

Since $\ker{B^* +i} = J \C ^n$ is cyclic
for $ A_j $ (by Theorem \ref{innercyclic} since $\Theta _B$ is inner), for $j=1,2$, it follows that we can define a unitary $U : \K _1  \rightarrow \K _2$ as follows.
Let $\{ v_k = J e _k \} $ be an orthonormal basis of $P_{-i} \H$. Any $f \in \K _1$ can be written
$$ f = f_1 (A_1)  v_1 +...+ f_n (A_1)  v_n, $$ and define $$ Uf = f_1 (A_2)  v_1 +...+ f_n (A_2 )  v_n.$$ This is isometric because
\ba \ip{f_k (A_1 )  v_k}{f_j (A_1)  v_j} & = & \ip{ J^* \ov{f_j} (A_1) f_k (A_1) J e_k}{e_j} \nonumber \\
& = &  \ip{J ^* \ov{f_j} (A_2) f_k (A_2) J e_k}{e_j} \nonumber \\
& =&   \ip{U f_k (A_1 )  v_k}{ Uf_j (A_1)  v_j}.\ea The map $U$ is also onto because $\ker{B^* +i}$ is cyclic for $A_1$ and $A_2$.

\end{proof}

\begin{remark}
Suppose that $A \in \Extu{B}$, which is to say that there is an isometry $U: \H \rightarrow \K $ such that $A \in \Ext{UB U^*}$. In this case we define
$\Phi _A := \Phi [ A ; U B U^* ].$

Note that if $A \in \Extu{B}$ has characteristic function $\Phi _A$, then there is a corresponding $A' \in \Ext{B}$ such that $\Phi _{A'} = \Phi _A$. This follows from Naimark's dilation theorem \cite[Theorem 4.6]{Paulsen}.

Indeed if $A \in \Extu{B}$ so that $A \in \Ext{UBU^*}$ for some isometry $U:\H \rightarrow \K$ then $$ Q(\Om ) := P_\H U^* P_A (\Om) U P_\H, $$ is a positive operator-valued measure acting on $\H$.
Assume for now that $A$ is defined using a $W \in \Ext{Ub(B)U^*}$ which does not have $1$ as an eigenvalue so that $A = b^{-1} (W)$.

Since $A  = b^{-1} (W)$ and $1 \notin \sigma _p (W)$, $A$ is a densely defined self-adjoint operator and  $P_A (\R ) := \chi _\R (A) =  \bm{1} _\K$,
as otherwise $P_A (\R ) \K $ is a non-trivial reducing subspace for $A$ which contains $\H.$ In other words the projection-valued measure of $A$ is unital.

By Naimark's dilation theorem there is a larger Hilbert space $\K '  \supset \H$ and a unital projection-valued measure $P(\Om)$
acting on $\K ' $ such that the compression $$P_\H P(\Om) P _\H = Q(\Om), $$ for any Borel set $\Om$. This projection-valued measure $P$ is called a dilation of $Q$, and it can be chosen to be minimal in the sense that $\K ' = \bigvee P(\Om ) \H$. If $A'$ is the self-adjoint operator corresponding to this projection valued measure, $$ A' := \intfty t P(dt), $$ then it follows that $A' \in \Ext{B}$. It is also clear that by definition, $\Phi _A = \Phi _{A'}$.

If $A$ is defined using $W \in \Ext{Ub(B)U^*}$ with $1 \in \sigma _p (U)$, define $Q (\Om ) = P_\H P_U (\Om ) P_\H$, a unital positive operator-valued measure (POVM) on the unit circle.
Again apply Naimark's dilation theorem to obtain a unitary operator $U ' $ on $\K ' \supset \H$. As before it follows that if $A' \in \Ext{B}$ is defined using $U' \in \Ext{b(B)}$, that $\Phi _{A '} = \Phi _A$.

\label{unieqiv}
\end{remark}

\begin{thm}
The map $A \in \ext{B} \mapsto \Phi _A$ is a bijection onto the set of all contractive analytic functions $\Phi _A$ which are greater or equal to $\Theta _B$.

\label{bijection}
\end{thm}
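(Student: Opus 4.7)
The map $[A] \mapsto \Phi_A$ is well-defined on $\ext{B}$ by the very definition of $\sim$, and it is injective for the same reason. Theorem \ref{extchar} already shows that its image is contained in the set of contractive analytic $\C^{n\times n}$-valued functions $\Phi$ on $\C_+$ with $\Phi \geq \Theta_B$. The remaining content is surjectivity, which is what I outline below.

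Suppose $\Phi$ is such a function, so $\Theta_B \leq \Phi$. Since $\Theta_B(i) = 0$ and $\Theta_B^{-1}\Phi$ is required to be analytic on $\C_+$, necessarily $\Phi(i) = 0$; equivalently, the disc Herglotz measure $\sigma_\phi$ of $\phi := \Phi \circ b^{-1}$ is unital. The plan is to manufacture $A \in \Ext{B}$ with $\Phi_A = \Phi$. First I would form the Herglotz space $\K := \L(\wt{\Phi})$, where $\wt{\Phi}$ is the contractive analytic function corresponding to the Herglotz function $\pi G_\Phi$. Lemma \ref{Hergz} then places the multiplication operator $\mf{Z}_{\wt{\Phi}}$ in $\sym{n}{\K}$, and the Frostman-shift calculation carried out in the proof of Theorem \ref{FMshift} shows that, because $\Phi(i) = 0$, its Livsic characteristic function is $\Phi$ itself (no non-trivial shift intervenes). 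The hypothesis $\Theta_B \leq \Phi$, combined with the Livsic equivalence $B \simeq Z_{\Theta_B}$ and the (suitably rescaled) embedding of Example \ref{motive} and Theorem \ref{conless}, supplies an isometry $W : \H \to \K$ intertwining $B$ with a restriction of $\mf{Z}_{\wt{\Phi}}$ and carrying $\ker{B^* + i}$ into $\ker{\mf{Z}_{\wt{\Phi}}^* + i}$.

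Next I would pick a self-adjoint extension $A_0$ of $\mf{Z}_{\wt{\Phi}}$ on some enclosing Hilbert space $\K' \supseteq \K$ such that $W\H$ is cyclic for the von Neumann algebra generated by $b(A_0)$; then $A_0 \in \Extu{B}$ with $\K_{A_0} = \K'$, and Remark \ref{unieqiv} (via Naimark's dilation theorem) would produce a genuine $A \in \Ext{B}$ with $\Phi_A = \Phi_{A_0}$. To finish, $\Phi_{A_0}$ is computed by the recipe of Section \ref{larger}: the space $\K_{A_0}$ is identified with $\L(\wt{\Phi}_{A_0})$ via the reproducing kernel $K_w(z) = \Omega_{A_0}(z)^*\Omega_{A_0}(w)$, and the Herglotz-uniqueness theorem then forces $\wt{\Phi}_{A_0} = \wt{\Phi}$, whence $\Phi_{A_0} = \Phi$.

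The main obstacle is the cyclicity requirement in the previous paragraph. When $\Phi$ is inner, Corollary \ref{innercyclic} applied to $\mf{Z}_{\wt{\Phi}}$ (whose Livsic characteristic function is then inner) guarantees that $\ker{\mf{Z}_{\wt{\Phi}}^* + i}$, and hence $W\H$, is cyclic for every $A_0 \in \Ext{\mf{Z}_{\wt{\Phi}}}$, so a canonical self-adjoint extension on $\K$ itself works. When $\Phi$ is not inner, Corollary \ref{innercyclic} fails and one must instead select $A_0$ as a non-canonical self-adjoint extension of $\mf{Z}_{\wt{\Phi}}$ on a genuinely larger space $\K' \supsetneq \K$, produced for instance by a Naimark dilation of the POV on $\R$ obtained from $W^* P_{A_0}(\cdot) W$. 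Identifying that enlarged $\K'$ with $\L(\wt{\Phi}_{A_0})$ and matching the reproducing kernels there is the most delicate step of the full proof.
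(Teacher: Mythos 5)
Your front end is fine: well-definedness and injectivity are indeed automatic from the definition of $\ext{B}$, Theorem \ref{extchar} gives containment of the image, $\Phi(i)=0$ follows as you say, and the embedding of $B$ into $\mf{Z}_\Phi$ (Example \ref{motive}, Corollary \ref{lessthan}) carrying $\ker{B^*+i}$ onto $\ker{\mf{Z}_\Phi^*+i}$, together with Remark \ref{unieqiv} to pass from $\Extu{B}$ to $\Ext{B}$, is exactly the paper's strategy. The gap is in how you produce the extension realizing $\Phi$. You take an essentially arbitrary self-adjoint extension $A_0$ of $\mf{Z}_{\wt{\Phi}}$, subject only to a cyclicity requirement, and assert that identifying $\K_{A_0}$ with $\L(\wt{\Phi}_{A_0})$ plus ``Herglotz uniqueness'' forces $\wt{\Phi}_{A_0}=\wt{\Phi}$. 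That cannot work: distinct extensions of $\mf{Z}_{\wt{\Phi}}$ have distinct characteristic functions relative to $\mf{Z}_{\wt{\Phi}}$ --- this is precisely the injectivity (Theorem \ref{equivalence}) you invoke at the start --- and by Theorem \ref{alexclark} the canonical extensions $\mf{Z}_\Phi (U)$, $U\in\mc{U}(n)$, realize every $U^*\Phi$, so all but one of them has characteristic function different from $\Phi$. The kernel $\Omega_{A_0}(z)^*\Omega_{A_0}(w)$ is built from the compressed spectral measure $J^*P_{A_0}(\cdot)J$, which genuinely varies with $A_0$; nothing in your argument forces it to be the Herglotz measure of $\wt{\Phi}$, so the concluding step $\Phi_{A_0}=\Phi$ is unjustified (and false for a generic choice of $A_0$), whether or not $\Phi$ is inner.

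The missing idea is the Alexandrov--Clark computation, Theorem \ref{alexclark}, applied to $\mf{Z}_\Phi$ with $U=\bm{1}$: since $\Phi(i)=0$, the Livsic characteristic function of $\mf{Z}_\Phi$ is $\Phi$ itself, so the specific canonical self-adjoint extension $A:=\mf{Z}_\Phi(\bm{1})$ satisfies $\Phi[A;\mf{Z}_\Phi]=\bm{1}^*\Theta_{\mf{Z}_\Phi}=\Phi$. Because the embedding isometry $V$ of Example \ref{motive} sends $K_i^{\Theta}\vec{v}$ to $K_i^{\Phi}\vec{v}$, hence maps $\ker{\mf{Z}_{\Theta}^*+i}$ onto $\ker{\mf{Z}_{\Phi}^*+i}$, the characteristic function of $A$ computed relative to the embedded copy of $B$ coincides with the one computed relative to $\mf{Z}_\Phi$, namely $\Phi$; Remark \ref{unieqiv} then yields $A'\in\Ext{B}$ with $\Phi_{A'}=\Phi$. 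With this selection no case distinction between inner and non-inner $\Phi$ is needed (only $\Theta_B$ must be inner, which is the standing hypothesis), whereas your fallback for non-inner $\Phi$ --- a Naimark dilation of the POVM $W^*P_{A_0}(\cdot)W$ --- is circular, since it defines $A_0$ in terms of its own spectral measure. In short, the proposal is missing the one lemma (Theorem \ref{alexclark}) that actually produces an extension with the prescribed characteristic function, and the substitute argument offered in its place does not hold.
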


This needs some setup: Given $B \in \sym{n}{\H}$ with characteristic function $\Theta _B$ let $V := b(B)(1-P_i)$, the partial isometric extension of $b(B)$, and define $\theta _V := \Theta _B \circ b^{-1}$,
a contractive analytic function on the unit disc, $\D$. Here, as before $P_i$ projects onto $\ker{B^* -i}$.

Recall that the Alexandrov-Clark measures for $\theta _V$ are defined as the $n\times n$ matrix-valued measures
$\delta _U$ for any $U \in \mc{U} (n)$ (the group of $n\times n$ unitary matrices) associated with the Herglotz functions $$ g _U := \frac{1+\theta _V U^*}{1-\theta _V U^*}, $$ via the Herglotz representation theorem for the unit disk \emph{i.e.}
$$ \re{ g _U (z)} = \int _\T \re{\frac{\alpha +z}{\alpha -z} } \delta _U (d\alpha) .$$ Let $G _U := g _U \circ b$ be the corresponding Herglotz function on $\C _+$. We define the Alexandrov-Clark measures of $\Theta _B$ to be the measures $\Delta _U$ on $\R$ such that
$$ \re{ G _U (z) } = \delta _U (\{ 1 \} ) \im{z} + \intfty \re{\frac{1}{i\pi} \frac{1}{t-z} } \Delta _U (dt ).$$ Recall that as discussed in Section \ref{Herglotz} (see equation (\ref{measrel})) we have that
$$ \Delta _U (\Om ) := \int _\Om \pi (1 +t ^2 ) (\delta _U \circ b) (dt), $$ where $(\delta _U \circ b ) (\Om ) = \delta _U (b(\Om ))$ and $b (z) = \frac{z-i}{z+i}$, $b: \R \rightarrow \T \sm \{ 1 \}$.

Now let $Z$ denote the unitary operator of multiplication by $z$
in $L^2 _{\theta } (\T)$ (the $L^2$ space of vector-valued functions on $\T $ which are square integrable with respect to the measure $\delta _{\bm{1}}$).

Let $\{b_j ^- (z) = e_j \}$ be a basis for the constant functions in $L^2 _\theta$. Since $\Theta (i) = 0 = \theta  (0)$, it follows that
this is an orthonormal basis. Similarly define $b_j ^+ (z) := \frac{1}{z} e_j$. For any $A \in \C ^{n\times n}$ let
$$ Z(A) := Z + P_- (\wt{A}- 1 ) P_- Z, $$ where $P_- $ projects onto the closed span of the $b_j ^-$, and $\wt{A} = j^* A j$ where $j$ is an isomorphism
defined by $j e_k = b_k ^-$ which takes $\C ^n$ onto the range of $P_-$.  Then as shown in \cite{Livsic2} $Z(0)$ has Livsic characteristic function $\theta _V$, and so it follows that there is a unitary
transformation $W : \H \rightarrow L^2 _\theta$ that implements the equivalences
$Z(0) \simeq V  = b(B)(1 - P_i )$, and $Z(U) \simeq V  (U)$ for any $U \in \mc{U} (n)$, and such that $W : \ker{B^* -i } = \ker{V } \rightarrow \ker{Z (0) } = \bigvee b_j ^- $
sends $u_j \mapsto b_j ^-$  \cite{Livsic2, Martin-uni}, where $\{ u_j \}$ is an orthonormal basis of $\ker{B^* -i }$.

Moreover the results of \cite{Martin-uni} show that
$$ \delta _U (\Om) = \left[ \ip{ \chi _\Om (Z (U) ) b_i ^- }{b_j ^-} \right]. $$
Using the fact that $G_U = g _U \circ b$, and the relationship between Herglotz functions and measures on the upper half-plane and the disk as described in Section \ref{Herglotz},
it follows that
$$ \re{G_U (w)} = \delta _U (\{ 1 \} ) \im{w} +  \intfty \re{\frac{1}{i\pi} \frac{1}{t-z} } \pi (1+t^2 ) \wt{\Delta} _U (dt), $$ where
$\wt{\Delta } _U := \delta _U \circ b$ so that
\ba \wt{\Delta} _U (\Om ) & = & \left[ \ip{\chi _{b(\Om)} ( Z_U ) b_i ^+ }{b_j ^+} \right] \nonumber \\
& =&  \left[ \ip{\chi _{b(\Om)} ( b( B (U) ) ) u_i }{ u_j } \right] \nonumber \\
& = & \left[ \ip{\chi _{\Om} ( B (U)  ) u_i }{ u_j } \right] ,\ea
and $$ \delta _U (\{ 1 \} ) = \left[ \ip{ \chi _{\{1 \} } (Z (U) ) b_i ^+ }{b_j ^+ } \right] = \left[ \ip{ \chi _{\{1 \} } (b (B(U)) )  u _i }{u _j } \right].$$

\begin{thm}
  \label{alexclark}   For any $U \in \mc{U} (n)$, $\Phi _{B (U)} =  U^* \Theta _B $.
\end{thm}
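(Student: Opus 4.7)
The plan is to read off $\Phi_{B(U)}$ from its Herglotz measure on $\T$ and identify this measure with a unitary conjugate of the Alexandrov--Clark measure $\delta_U$ of $\theta_V := \Theta_B \circ b^{-1}$, for which the preamble already supplies the closed form $g_U = (1+\theta_V U^*)(1-\theta_V U^*)^{-1}$.

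First I would use the block structure $V(U) = V + \hat U$ of the canonical extension. Since $V$ vanishes on $\ker{V} = \ker{B^*-i}$, the definition $\hat U = \sum_{jk} U_{jk}\ip{\cdot}{u_j}v_k$ gives $V(U)u_l = \sum_k U_{lk}v_k$, so with $J_+, J : \C^n \to \ker{B^*\mp i}$ the isometries sending $e_k$ to $u_k, v_k$ respectively we get $V(U)J_+ = JU^T$. The unitarity of $V(U)$ rearranges this to $J = V(U)J_+\ov{U}$; substituting into $\sigma_{B(U)}(\Omega) = J^*\chi_\Omega(V(U))J$ and using the spectral identity $V(U)^*\chi_\Omega(V(U))V(U) = \chi_\Omega(V(U))$ (valid for unitary $V(U)$) produces an expression that, after accounting for the paper's Gram-type indexing convention $[\,\cdot\,]_{ij} = \ip{\cdot\, u_i}{u_j}$ for matrix-valued spectral measures, reduces to the clean measure-level identity
$$\sigma_{B(U)}(\Omega) \;=\; U^*\,\delta_U(\Omega)\,U.$$

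Next I would integrate $\frac{\alpha+z}{\alpha-z}$ against both sides to pass this to Herglotz functions: $g_{\phi_U}(z) = U^* g_U(z) U$. The algebraic collapse is now immediate from $U^*U = I$: inserting $I = UU^*$ inside the resolvent and using
$$U^*(1\pm\theta_V U^*)U \;=\; I \pm U^*\theta_V$$
yields $g_{\phi_U} = (I + U^*\theta_V)(I - U^*\theta_V)^{-1}$, the Herglotz function of the contractive analytic function $U^*\theta_V$. Inverting the Cayley correspondence gives $\phi_U = U^*\theta_V$, and translating back to $\C _+$ via $\phi_U = \Phi_{B(U)}\circ b^{-1}$ and $\theta_V = \Theta_B\circ b^{-1}$ produces $\Phi_{B(U)} = U^*\Theta_B$.

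The main obstacle is the careful bookkeeping of matrix-algebraic conventions: the transpose implicit in the paper's formula for $\hat U$ produces $V(U)J_+ = JU^T$ rather than $JU$, and the Gram-type indexing of matrix-valued measures introduces a further transpose. These two transpositions must conspire to collapse the naive expression (which would otherwise read $U^T\delta_U\ov U$ on the left and $U^T\theta_V U^*\ov U$ on the right) into the single unitary conjugation by $U^*$ required for the simplification $U^*(1\pm\theta_V U^*)U = I \pm U^*\theta_V$ to kick in; once that identification is made the rest is a direct algebraic collapse.
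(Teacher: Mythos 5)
Your overall route is genuinely different from the paper's: you try to pass directly from the compression of the spectral measure of $V(U)=b(B(U))$ onto $\ker{B^*+i}$ (which defines $\sigma_{B(U)}$ and hence $\Phi_{B(U)}$) to the Alexandrov--Clark measure $\delta_U$ of $\theta_V$ (a compression onto $\ker{B^*-i}$), using $V(U)J_+=JU^T$ and $V(U)^*\chi_\Om(V(U))V(U)=\chi_\Om(V(U))$, and then to finish by the Herglotz algebra $U^*(1\pm\theta_VU^*)U=1\pm U^*\theta_V$. The algebraic steps you list are individually fine ($V(U)J_+=JU^T$, $J=V(U)J_+\ov{U}$, the spectral identity, and the final Cayley-type manipulation), and indeed \emph{if} the measure identity $\sigma_{B(U)}=U^*\delta_U U$ holds, the theorem follows exactly as you say. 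But that identity is not derived in your proposal, and it is precisely the crux: it is essentially equivalent to the theorem itself. Carrying out your own substitution, one gets $\sigma_{B(U)}(\Om)=U^T\bigl[J_+^*\chi_\Om(V(U))J_+\bigr]\ov{U}$, and to convert the bracket into $\delta_U$ you must invoke the identification of Alexandrov--Clark measures with compressed spectral measures cited from \cite{Martin-uni}. With the indexing as displayed in the paper ($[\delta_U(\Om)]_{ij}=\ip{\chi_\Om(Z(U))b_i^-}{b_j^-}$, i.e.\ the Gram-type convention you mention), the bracket equals $\delta_U(\Om)^T$ and your computation lands on $\sigma_{B(U)}=(U^*\delta_U U)^T$, which after the Herglotz algebra yields $\Phi_{B(U)}=\Theta_B^T\,\ov{U}$ rather than $U^*\Theta_B$; with the opposite (standard operator-matrix) convention it lands on $\ov{U}^*\delta_U\ov{U}$, which does not simplify at all. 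For $n=1$ all of these coincide, but for matrix-valued $\Theta_B$ the gap between $U^*\Theta_B$ and $\Theta_B^T\ov{U}$ is real, so the assertion that ``the two transpositions must conspire'' is exactly the statement that needs proof, not a bookkeeping afterthought.

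This is, in fact, why the paper's proof looks the way it does: it introduces the transposed operator $B_T$ with characteristic function $\Theta_B^T$, the conjugations $C_B$, $C_{B_T}$ of Corollary \ref{conjugation}, and proves the intertwining $C_{B_T}B_T(U)=B(U^T)C_{B_T}$; it is this conjugation step that legitimately converts the Alexandrov--Clark data (naturally attached to the $\ker{B^*-i}$ side, and naturally appearing transposed) into the $\ker{B^*+i}$-compressed spectral measure of $B(U^T)$, giving $G_\Phi^T=G_U$ and hence $\Phi_{B(U)}=U^*\Theta_B$. To repair your argument you would either have to import that conjugation mechanism, or re-derive the \cite{Martin-uni} identification of $\delta_U$ with compressed spectral measures with all conventions made explicit (including the Gram-type convention already built into the definition of $\Theta_B$ via $[\ip{w_j(z)}{u_k}]$), at which point you are essentially reproducing the paper's proof. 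As it stands, the central identity $\sigma_{B(U)}=U^*\delta_U U$ is assumed rather than established.
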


\begin{proof}
Let $B_T \in \sym{n}{\H _T}$ be a symmetric linear transformation with characteristic function $\Theta _B ^T$, and let $\{ \wt{u} _j \}$ , $\{ \wt{v} _j \}$ be orthonormal
bases of $\ker{B_T ^* -i}$ and $\ker{B_T ^* +i }$, respectively.

By Corollary \ref{conjugation}, there is a conjugation $C _T := C _{B_T} : \H _T \rightarrow \H$ which intertwines $B_T$ and $B$. Let $\{ u_k \} $, $\{ v_k \}$ be the orthonormal bases
of $\ker{B^* -i }$ and $\ker{B^* +i}$ respectively given by $C _T \wt{u} _j = v _j $ and $C _T \wt{v} _j = u_j$. Further recall that $C_T ^* = C _B$ is a conjugation intertwining $B$ and $B_T$ so that
$C _B v_j = \wt{u} _j$ and $C _B u_j = \wt{v} _j$.  Also define
$J :\C ^n \rightarrow \ker{B^* -i}$ by $J e_k = v_k$, for some orthonormal basis $\{ e_k \} $ of $\C ^n$.

Let $V$ and $V_T$ be the partial isometric extensions of the Cayley transforms of $B$, and $B_T$. Given any $U \in \mc{U} (n)$, let
$$ V(U) := V + \hat{U} := V + \sum _{i,j} U_{ij} \ip{\cdot}{u_i}v_j.$$ The set of all $V(U)$, for $U \in \mc{U} (n)$ is is the set of all canonical unitary extensions of $V$,
and the set of all $B(U):= b^{-1} (V(U))$ is the set of all canonical self-adjoint extensions of $B$.
Similarly define $$ V_T (U) := V_T + \wt{U} := V_T + \sum _{i,j} U_{ij} \ip{\cdot}{\wt{u} _i}\wt{v} _j.$$

Consider the self-adjoint extension $B _T (U) = b^{-1} (V_T (U))$, where $V_T$ is the partial isometric extension of $b(B_T)$. Then
$$ \dom{B_T (U)} = \ran{ 1 - V_T (U) } = \dom{B_T} + \left( 1 - \wt{U} \right) \wt{S}_{-i}, $$
where $\wt{S}_{\pm i} = \wt{P}_{ \pm i} \H$, and $\wt{P} _{\pm i}$ are the projections onto $\ker{B _T ^* \pm i}$. Similarly define $S _{\pm i }$ and $P _{\pm i}$.
As above, $\wt{U}$ is defined by $$ \wt{U} = \sum _{ij} U_{ij} \ip{\cdot}{\wt{u} _i} \wt{v} _j : \wt{S} _{i} \rightarrow \wt{S} _{-i}.$$

Given any $g \in \dom{B _T (U)}$, it follows that there is some $\wt{f} = \sum \ip{\wt{f}}{\wt{u} _i} \wt{u} _i \in \wt{S} _{i}$ and $g_T \in \dom{B _T}$ such that
\ba  g & = & g_T + \sum \ip{\wt{f}}{\wt{u} _i} \wt{u} _i - \sum  U_{ij} \ip{\wt{f}}{\wt{u} _i} \wt{v} _j \nonumber \\
& =& g_T + \wt{f} -\wt{U} \wt{f}, \nonumber \ea  so that
$$ B_T (U) g =  B _T g_T + i\wt{f} +i \wt{U} \wt{f}. $$
Now $C_T g_T = g_B \in \dom{B}$, and
\ba C_T g & = & g_B + \sum \ov{\ip{\wt{f}}{\wt{u} _i}} v_i - \sum \ov{U_{ij} } \ov{\ip{\wt{f} }{\wt{u} _i}} u_j  \nonumber \\
& =&  g_B + f - \hat{W} f, \ea where $C_T \wt{f} = f  := \sum \ov{\ip{\wt{f}}{\wt{u} _i}} v_i \in S_{-i}$ and
$$\hat{W} = \sum _{ij} \ov{U_{ij}} \ip{\cdot}{v_i} u_j.$$

Comparing this to
$$ \hat{U} ^* = \sum _{ij} \ov{U_{ji}} \ip{\cdot}{v_i}u_j, $$ we see that $\hat{W} = \widehat{U ^T} ^*$

Now if $R \in \mc{U} (n)$ then $b(B(R) ) ^* = V^* + \hat{R} ^*$ and if $b^\dagger (z) = \frac{z+i}{z-i}$ then its inverse with respect to composition is
$b^{-1} (z) ^\dagger = -i \frac{1+z}{1-z}$, so that we also have that $\dom{B(R)} = \ran{1 - V (R) ^* }$.
It follows that
$$ C_T g =  g_B + f -\widehat{U ^T} ^* f  \in \dom{B( U^T )}, $$ and
$$ B(U^T) C_T g =  B g_B -i f -i \widehat{U^T} ^* f, $$  while
$$ C_T B_T (U) g = C_T B _T g_T  + C _T (i \wt{f} + i \wt{U} \wt{f} ) = B(U^T) C_T g, $$   and this proves that
\be C _T B_T (U) = B(U^T ) C_T. \ee  It further follows that
$$ C_T V_T(U) = V(U ^T ) ^* C_T.$$

Now let $\delta _U$ be the Alexandrov-Clark measure associated with the Herglotz functions
$$g _U (z) := \frac{1 + \theta ^T U ^*}{1- \theta _T U^*}, $$ where $\theta ^T := \Theta ^T \circ b$, and as before let
let $\wt{\Delta } _U := \delta _U \circ b ^{-1} $. As discussed before this proof, the results of \cite{Martin-uni} show that
$$ \wt{\Delta } _U (\Om) = \left[ \ip{\chi _\Om (B_T (U) ) \wt{u} _i }{\wt{u} _j} \right], $$
so that
\ba \wt{\Delta } _U (\Om) & =& \left[ \ip{C_T \wt{u} _j }{ C_T \chi _\Om ( B_T (U) ) \wt{u} _i} \right] \nonumber \\
& = & \left[ \ip{u _j }{  \chi _\Om ( B (U^T) ) u _i} \right] \nonumber \\
& =& (J^* P_{B(U^T)} (\Om ) J ) ^T. \nonumber \ea

Similarly,
\ba \delta _U (\{ 1 \} ) & = & \left[ \ip{ C_B C_T \chi _{\{1 \} } (V _T(U) )  \wt{u} _i }{\wt{u} _j }  \right] \nonumber \\
& =&  \left[ \ip{ v_j }{ \chi _{ \{1 \} } (V (U ^T ) ^* ) v _i } \right] \nonumber \\
& =&  \left[ \ip{ v_j }{ \chi _{ \{1 \} } (V (U ^T ) ) v _i } \right] \nonumber \\
&= & (J ^* P _{V (U ^T)} (\{1 \} ) J) ^T. \ea

In conclusion we have that if $\Phi := \Phi _{B (U ^T )}$, that
$ G_\Phi ^T = G _U,$ so that
$$ G _\Phi  = G_U  ^T = \frac{1 + (U^*) ^T \Theta _B}{1 - (U^*) ^T \Theta _B }.$$ This proves that $\Phi _{B (U ^T)} = (U ^T) ^* \Theta _B$,
or equivalently that $\Phi _{B (U) } = U^* \Theta _B$.
\end{proof}

\begin{proof}{ (of Theorem \ref{bijection})}

This map is automatically injective by the definition of $\ext{B}$. To show that it is surjective, let $\Phi$ be a contractive analytic function such that $ \Phi \geq \Theta _B$, \emph{i.e.} $\Theta _B  ^{-1} \Phi $ is a contractive analytic function. Let $\Theta := \Theta _B$.

Now we have $B \simeq \mf{Z} _{\Theta }$ acting in $\L (\Theta )$, and by Corollary \ref{lessthan}, $\mf{Z} _{\Theta } \lessim \mf{Z} _\Phi$. 
Furthermore by Theorem \ref{alexclark} we have that there is a canonical self-adjoint extension $A$ of $\mf{Z} _\Phi$ whose characteristic function $\Phi _A = \Phi [A ; \mf{Z} _\Phi ]$ relative to $\mf{Z} _\Phi$ is $\Phi$.
Moreover one can see from Example \ref{motive} that the isometry $V : \L (\Theta) \rightarrow \L (\Phi )$ which obeys $V \mf{Z} _\Theta \subset \mf{Z} _\Phi V$ also satisfies $V P_{-i } = Q_{-i} V$ and $V^*V P_{-i} = P_{-i}$
where $P_{-i}$ projects onto $\ker{\mf{Z} _\Theta ^* +i}$ while $Q_{-i}$ projects onto $\ker{\mf{Z} _\Phi ^* +i}$. To see this note that since $\Theta (i) = 0 =\Phi (i)$ that
$$ K_i ^\Theta (z) = \frac{2}{1-\Theta (z)} \frac{i}{\pi} \frac{1}{z+i} \quad \mbox{and} \quad  K_i ^\Phi (z) = \frac{2}{1-\Phi (z)} \frac{i}{\pi} \frac{1}{z+i}. $$
Observe that $$ V_1 (z) = \frac{1 -\Theta (z)}{2},$$ is an isometry of $\L (\Theta )$ onto $K^2 _\Theta$, that $K^2 _\Theta$ is isometrically contained in $K^2 _\Phi$ (since $\Theta$ is inner), and that multiplication by
$$ V_2 (z) := \frac{2}{1 -\Phi (z) }, $$ is an isometry of $K^2 _\Phi $ into $\L (\Phi )$. Since $V$ acts as multiplication by $V(z) = V_2 (z) V_1 (z)$, it is an isometry that obeys $V K_i ^\Theta \vec{v} = K_i ^\Phi  \vec{v}$ for any $\vec{v} \in \C ^n$.

It follows that the isometry $V : \L (\Theta ) \rightarrow \L (\Phi )$ obeys $V \ker{ \mf{Z} _\Theta ^* +i } = \ker{\mf{Z} _\Phi ^* +i }$. This shows that the characteristic function $\Phi _A = \Phi _A [A; \mf{Z} _\Phi] = \Phi$ of $A$ with respect to $\mf{Z} _\Phi$ is the same as the characteristic function $\Phi _A$ of $A \in \Extu{B}$ with respect to $B$. By Remark \ref{unieqiv}, there is an $A' \in \Ext{B}$ with $\Phi _{A'} = \Phi _{A} = \Phi$.

Putting it all together we have that
$$ B \simeq \mf{Z} _{\Theta _B} \lessim \mf{Z} _\Phi , $$ so that $B \lessim \mf{Z} _\Phi$, $A' \in \Ext{B}$ and $\Phi _{A'} = \Phi \geq \Theta _B$.  This proves surjectivity.
\end{proof}

\section{Partial order calculations}

\label{pord}

In this section we study the partial order $\lessim$ on symmetric linear transformations described in the introduction:

\begin{defn}
    Given $B_1 , B_2 \in \symm$ we say that $B _1 \lessim B_2$ if $B_1 \simeq B_1 ' \subset B_2$. Recall here $\simeq$ denotes unitary equivalence.
\end{defn}

We assume in this section that $n < \infty$, and under this assumption, it is not difficult to verify that $\lessim$ is indeed a partial order on the unitary equivalence classes of $\symm$ (see \cite{MR-isomult}).  Also, using the Cayley transform, this also defines a partial order on $\isom$. Namely, given
$V_1 , V_2 \in \isom$, $V_1 \lessim V_2$ if and only if $V_1 \simeq V_1 ' \subseteq V_2$, where recall that $V_1 ' \subseteq V_2$ means that
$V_2 | _{\ker{V_1 ' } ^\perp} = V_1 '  | _{\ker{V_1 ' } ^\perp}$.  This is the same, modulo unitary equivalence as the partial order defined on partial isometries by Halmos and McLaughlin in \cite{Halmos}.  That is, they define $V_1 \leq V_2$ if $V_1 \subseteq V_2$.

The main goal of this section is, given $B _1 , B_2  \in \symm$ with $\Theta _1 := \Theta _{B _1}$ inner, to provide necessary and sufficient conditions on the characteristic function $\Theta _2 := \Theta _{B_2}$ of $B_2$ so that $B_1 \lessim B_2$.

Let $B_1 \in \sym{m}{\H _1}$ and $B_2 \in \sym{n}{\H _2 }$ be symmetric linear transformations, and suppose that $B_1 \lessim B_2$.  As always in this paper we assume that $\Theta _1$ is inner.

\begin{remark}
\label{densecon}
 Let $\Sigma _2$ be the Herglotz measure of $\Theta _2$. For now we assume that the Herglotz measure $\sigma _2$ of $\theta _2 := \Theta _2 \circ b^{-1}$ is such that $\sigma _2 (\{ 1 \} ) =0$. Recall from Section \ref{Herglotz} that this implies that
$W_{\Theta _2} : L^2 _{\Sigma _2} \rightarrow \L (\Theta _2 )$ is an onto isometry so that $B_2 \simeq M _\Sigma$. This is the case, in particular, when $B_2$ is densely defined.
\end{remark}

Let $\Sigma := \pi \Sigma _2 $ where $\Sigma _2$ is the Herglotz measure on $\R$ corresponding to $\Theta _2$. By Remark \ref{densecon}, we can and do assume that $B_2 = M _\Sigma$.y Recall here that $M_\Sigma$ is the symmetric operator of multiplication by the independent variable in $L^2 _\Sigma$, where $L^2 _\Sigma$ is the Hilbert space of column vector-valued functions $f$ which are square integrable with respect to $\Sigma$, \emph{i.e.} if $f,g \in L^2 _\Sigma $ then
$$\ip{f}{g} _\Sigma = \intfty \left( \Sigma (dt) f (t) , g(t) \right) _{\C ^n }.$$

Let $\{ e_k \}$ be the standard basis of $\C ^n$, and let $\{ v _k \} _{k=1} ^n$ be a fixed orthonormal basis of $\ker{B_2 ^* +i}$. Define an isomorphism $J : \C ^n \rightarrow \ker{B_2 ^* +i}$ by
$J e_k = v_k$. By our previous result, Theorem \ref{alexclark}, on Alexandrov-Clark measures, there is a canonical $\ms{A} \in \Ext{B_2}$ such that
$$ \Phi [\ms{A} ; B_2] = \Theta _2, $$ and
$$ \Sigma (\Om ) = \int _\Om \pi ^2 (1+t^2) J^* P_\ms{A} (dt) J. $$ Since we assume that $B_2 = M _\Sigma$ it actually follows that $\ms{A} = M^\Sigma$, the self-adjoint operator of multiplication by $t$ in $L^2 _\Sigma$.

\begin{remark}

We can further choose $$ v_k := \frac{i}{\pi} \frac{1}{t+i} e_k, $$ this follows because if $\wt{\Phi }$ is the contractive analytic function corresponding to $\Sigma$, then the deBranges-Cauchy transform
isometry, $$ W : L^2 _\Sigma \rightarrow \L (\wt{\Phi } ), $$ is onto and acts as $$ Wh (z) = \frac{1}{i\pi} \intfty  \ov{\frac{1}{\pi (t-\ov{z})}} \Sigma (dt) h(t), $$ and the $e_j$-point evaluation vector at $z=i$ in $\L (\wt{\Phi}  )$ is
$$ K_i (z) e_j = \intfty \frac{i}{\pi (t+i)} \ov{\frac{i}{\pi (t-\ov{z}) }} \Sigma (dt) e_j = W v_j (z).$$ Since $K_i  e_j$ spans $\ker{\mf{Z} _{\wt{\Phi}} ^* +i}$, the $W^* K_i e_j = v_j$ span $\ker{M _\Sigma ^* +i}$. Moreover this choice of $v_k$ defines an orthonormal basis since $\Theta _2 (i) = 0$ implies that
\ba \mathbb{1} & = & \re{B_{\Theta _2} (i) } = \intfty \re{\frac{1}{i\pi} \frac{1}{t-i} } \Sigma _2 (dt) \nonumber \\
& = & \frac{1}{\pi} \intfty \frac{1}{1+t^2} \Sigma _2 (dt) = \frac{1}{\pi ^2} \intfty \frac{1}{1+t^2} \Sigma (dt). \nonumber \ea  It follows from this formula that $\ip{v_k}{v_j} = \delta _{kj}$.
\end{remark}

We are also free to assume that $B_1 \subset B_2 = M _\Sigma $ so that $B_1 \in \sym{m}{S}$ where $S \subset L^2 _\Sigma$. Let $A$ be the restriction of $\ms{A} = M ^\Sigma$ to the intersection of its domain with its smallest reducing subspace containing $S$.
Then $A \in \Ext{B_1}$. Let $\{ \wt{v} _k \} _{k=1} ^m$ be an orthonormal basis of $\ker{B_1 ^* +i}$, and let $\wt{J} : \C ^m \rightarrow \ker{B_1 ^* +i}$ be an isomorphism defined by $\wt{J} e_k = \wt{v} _k$.
Then if we define
$$ \wt{\Sigma } ' (\Om) := \int _\Om \pi (1+t^2) \wt{J} ^* P_{A} (dt) \wt{J}, $$ then we have that
$\Phi  := \Phi [A ; B_1]$ is the contractive analytic function corresponding to $\wt{\Sigma} ' $ and we define $\wt{\Sigma} = \pi \wt{\Sigma } '$.
Now since $\{ v_k \}$ is a cyclic set for $M ^\Sigma =\ms{A}$, we have that
\be  \wt{v} _j = D_{j1} (\ms{A}) v_1 + ... + D_{jn} (\ms{A}) v_n, \quad 1 \leq j \leq m \label{dmatrix} \ee  for certain functions $D_{jk} $ where $1 \leq j \leq m$ and $1 \leq k \leq n$ and $\frac{1}{t+i}(D_{j1}, ... , D_{jn} ) ^T \in L^2 _\Sigma $ for $1\leq j \leq m$. Here the superscript $T$ denotes transpose (we view elements of $L^2 _\Sigma$ as column vector functions).

Now if $f, g \in L^2 _{\wt{\Sigma}}$, then it follows that
\ba \ip{f}{g} _{\wt{\Sigma}} & = & \intfty \left( \wt{\Sigma} (dt) f (t) , g(t) \right) _{\C ^n } \nonumber \\
& =& \intfty (\ov{g_1 (t)} , ... , \ov{g_n (t)} ) \left( \begin{array}{ccc} ( \wt{\Sigma} (dt) e_1 , e_1 ) & \cdots & (\wt{\Sigma} (dt) e_n , e_1 ) \\
\vdots & \ddots & \\
( \wt{\Sigma} (dt) e_1 , e_n ) & \cdots & (\wt{\Sigma} (dt) e_n ,e_n ) \end{array} \right) \left( \begin{array}{c} f_1 (t) \\ \vdots \\ f_n (t) \end{array} \right).\ea

Hence we have that $$ \wt{\Sigma} (\Om ) = \left(
\begin{array}{ccc}
\ip{P_{A} (\Om) \wt{v} _1}{ \wt{v} _1} & \cdots & \ip{P_A (\Om) \wt{v} _m}{ \wt{v} _1 } \\
\vdots & \ddots & \\
\ip{ P_A (\Om ) \wt{v} _1 }{ \wt{v} _m } & \cdots & \ip{P_A (\Om) \wt{v} _m}{\wt{v} _m }  \end{array} \right), $$ and using the relationship (\ref{dmatrix}) between the
$\wt{v_j}$ and the $v_k$ we get that
\be \wt{\Sigma } (dt ) = D(t) ^* \Sigma (dt) D(t ), \ee where
\be D(t) := \left( \begin{array}{ccc} D_{11} (t) & \cdots & D_{m1} (t)  \\
\vdots & \ddots & \\
D_{1n} (t) & & D_{mn} (t) \end{array} \right), \ee  $D(t) : \C ^m \rightarrow \C ^n$.

Now since $A \in \Ext{B_1}$, $B_1 \in \sym{m}{S}$, let $\K :=$ the cyclic subspace of $L^2 _\Sigma$ generated by $S$ and $A$. Let
$U : L^2 _{\wt{\Sigma}} \rightarrow L^2 _\Sigma $ be defined by multiplication by $D(t)$, $V_A : \K \rightarrow \K _A$ be the model space isometry,
and $W : L^2 _{\wt{\Sigma}} \rightarrow \K _A$ be the deBranges Cauchy transform isometry.

\begin{claim}
    The linear map $U : L^2 _{\wt{\Sigma}} \rightarrow L^2 _\Sigma $ is an isometry which obeys
    $$ V_A U = W, \quad \mbox{and} \quad U U^* = \bm{1} _\K.$$ \label{isoid}
\end{claim}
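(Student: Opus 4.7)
The plan is to verify the three assertions one at a time, with the central technical input being the matrix identity $\wt{\Sigma}(dt) = D(t)^* \Sigma(dt) D(t)$ derived in the paragraph preceding the claim.

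First, that $U$ is isometric is immediate from that identity: for any $f \in L^2 _{\wt{\Sigma}}$,
$$\|Uf\|^2_{\Sigma} = \intfty f(t)^* D(t)^* \Sigma (dt) D(t) f(t) = \intfty f(t)^* \wt{\Sigma} (dt) f(t) = \|f\|^2_{\wt{\Sigma}}.$$

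To verify $V_A U = W$, I would compute both sides pointwise in $z$. Using $\Omega _A (z) = (A+i)(A - \ov z)^{-1} \wt{J}$ and taking adjoints gives $V_A g(z) = \wt{J}^* (A-z)^{-1} (A-i) g$. The key preliminary observation is that the specific choice $v_k(t) = \frac{i}{\pi(t+i)} e_k$ made at the start of the section, combined with the defining relation $\wt{v}_k = \sum _j D_{kj}(\ms{A}) v_j$ from (\ref{dmatrix}), forces $\wt{J} e_k (t) = \wt{v}_k(t) = \frac{i}{\pi(t+i)} D(t) e_k$. Substituting $Uh(t) = D(t) h(t)$ into the formula for $V_A g$, using that $A$ acts as multiplication by $t$ on $\K$, and then simplifying via $D(t)^* \Sigma(dt) D(t) = \wt{\Sigma}(dt)$, the $(t+i)$ factors cancel and one arrives at the integral $\intfty \frac{-i}{\pi(t-z)} \wt{\Sigma}(dt) h(t)$, which is precisely the defining formula for $Wh (z)$ as an element of $\L (\wt{\Phi}_A) = \K _A$.

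Finally, for $U U^* = \bm{1}_{\K}$: since $U$ is already known to be an isometry it suffices to show $\ran{U} = \K$. For any bounded Borel $h$ and $1 \leq j \leq m$,
$$U \!\left( \tfrac{i\, h(t)}{\pi(t+i)} e_j \right) (t) = D(t) \, \tfrac{i\, h(t)}{\pi(t+i)} e_j = h(t)\, \wt{v}_j(t) = \left(h(\ms{A}) \wt{v}_j\right)(t),$$
which belongs to $\K$ by the definition of $\K$ as the smallest reducing subspace for $\ms{A}$ containing $S \supset \ker{B_1 ^* + i}$. By Corollary \ref{innercyclic}, applicable because $\Theta _{B_1}$ is inner, $\ker{B_1 ^* + i}$ is cyclic for $A$, so the closed span of $\{h(A) \wt{v}_j : h \text{ bounded Borel},\ 1 \leq j \leq m \}$ is all of $\K$; this gives $\K \subset \ran{U}$. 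The reverse inclusion follows because the functions $\frac{i h(t)}{\pi(t+i)} e_j$ with $h$ a bounded Borel function of compact support are dense in $L^2 _{\wt{\Sigma}}$ (obtained by truncating $(t+i) \phi(t)$ for arbitrary $\phi \in L^2 _{\wt{\Sigma}}$), and their images under the isometry $U$ have just been shown to lie in $\K$.

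The main obstacle I anticipate is the bookkeeping of normalization constants — the factors of $\pi$ and $i$ entering through $\wt{\Sigma} = \pi \wt{\Sigma}'$, the explicit basis choice $v_k = \frac{i}{\pi(t+i)} e_k$, and the respective normalizations of the model space isometry $V_A$ and the deBranges Cauchy transform $W$. Ensuring these conspire so that $V_A U = W$ holds exactly, rather than up to a multiplicative constant, is the genuine burden; once the calibration is correct the content of the claim is formal.
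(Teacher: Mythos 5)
Your proposal is correct, and for the first two assertions it is essentially the paper's own argument: the paper substitutes $\wt{v} _j = \sum _k D_{jk} (\ms{A}) v_k$ into the components $\ip{(\ms{A} -i)(\ms{A} -z)^{-1} f}{\wt{v} _j} _\Sigma$ of $\Om _A (z) ^* f$, which is exactly the cancellation you carry out with the explicit formula $\wt{v} _j (t) = \frac{i}{\pi (t+i)} D(t) e_j$ (note it is the conjugated factor $t-i$, not $t+i$, that cancels, but your displayed conclusion $\frac{1}{i\pi} \int \frac{1}{t-z} \wt{\Sigma} (dt) h(t)$ is right and matches the paper's normalization of $W$). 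Where you genuinely diverge is $U U^* = \bm{1} _\K$: the paper never identifies $\ran{U}$ by hand; it uses that $V_A$ extends to a partial isometry on $L^2 _\Sigma$ with initial space $\K$ and that $W$ is an onto isometry, deduces $\ran{U} \subset \K$ from $\| V_A U f \| = \| W f \| = \| f \|$, and then writes $U = V_A ^* W$, so $U U^* = V_A ^* W W^* V_A = \bm{1} _\K$. You instead prove $\ran{U} = \K$ directly: $U \left( \frac{i h}{\pi (t+i)} e_j \right) = h(\ms{A}) \wt{v} _j \in \K$, such functions are dense in $L^2 _{\wt{\Sigma}}$ after the magnitude-and-support truncation you indicate, and cyclicity of $\ker{B_1 ^* +i}$ for $A$ (Corollary \ref{innercyclic}, legitimate since $\Theta _1$ is inner) gives the reverse inclusion. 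Your route is more concrete and does not need $W$ to be onto $\K _A$ nor the partial-isometry extension of $V_A$, while the paper's softer argument buys the useful byproduct formula $U = V_A ^* W$. One presentational caveat: as ordered, you apply $V_A$ to $U h$ before knowing $U h \in \K$; either establish $\ran{U} \subset \K$ first (your final paragraph already does this, with no circularity) or, as the paper does, extend $V_A$ to all of $L^2 _\Sigma$ via $V_A f (z) := \Om _A (z) ^* P_\K f = \wt{J} ^* (\ms{A} -i)(\ms{A} -z) ^{-1} f$, after which your computation is verbatim valid.
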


Since $U$ acts as multiplication by the matrix function $D(t)$, it is easy to see that $U$ will intertwine $M ^{\wt{\Sigma}}$ and $M ^\Sigma$. However we also want to verify
that $U$ takes the domain of $B \subset M _{\wt{\Sigma}}$ into the domain of $M _{\Sigma}$.  This claim will allow us to do this.

\begin{proof}
If $g \in L^2 _{\wt{\Sigma}}$ then the map $U : L^2 _{\wt{\Sigma}} \rightarrow L^2 _\Sigma $ defined by
$U g (t) = D(t) g(t)$ is clearly an isometry since
$$ \| U g \| ^2 = \intfty \left( D(t) ^* \Sigma (dt) D(t) g(t) , g(t) \right)  = \| g \| ^2.$$

Recall that $\K _A$ is the space of Cauchy transforms of the measure $\wt{\Sigma }$. The linear map $V_A $ is an isometry from $K \subset L^2 _\Sigma$ onto $\K _A$.
We can extend $V_A$ to a partial isometry acting on all of $L^2 _\Sigma$ by the formula
$$ V_A f (z) := \Om _A (z) ^* P_\K f = \wt{J}^* (A-i)(A-z) ^{-1} P _\K f = \wt{J}^* (\ms{A} -i) (\ms{A} -z) ^{-1} f, $$ for any $f \in L^2 _\Sigma$.
Then for any $f \in L^2 _\Sigma$, $V_A f (z) $ is a column vector with components
$$ (\Om _A  (z) ^* f  ) _j = \intfty \frac{t-i}{t-z}  ( \Sigma (dt) f(t), \wt{v} _j (t) ). $$
Since $\wt{v} _j (t) = D_{j1} (t) v_1 (t) + ... + D_{jn} (t) v_n (t), $ it follows that the above can be written as
\ba (\Om _A (z) ^* f) _j & = & \intfty \frac{t-i}{t-z}  \left( D(t) ^* \Sigma (dt) f (t) , v_j (t) \right) _{\C ^m} \nonumber \\
& = & \frac{1}{i\pi} \intfty \frac{1}{t-z}  \left( D (t) ^* \Sigma (dt) f (t) , e_j \right) _{\C ^m} , \ea so that
\be V_A f (z) = \Om _A (z) ^* f = \frac{1}{i\pi} \intfty \frac{1}{t-z} D (t) ^* \Sigma (dt) f (t). \ee

On the other hand the Cauchy transform isometry $W : L^2 _{\wt{\Sigma} } \rightarrow \K _A$ obeys
\be W g (z) = \frac{1}{i\pi} \intfty \frac{1}{t-z} \wt{\Sigma} (dt) f(t). \ee

Finally, observe that for any $g \in L^2 _{\wt{\Sigma}} $,
$$ V_A U g (z) = \frac{1}{i\pi} \intfty \frac{1}{t-z} D(t) ^* \Sigma (dt) D(t) g(t) = W g (z).$$
This proves that \be V_A U = W. \ee

Now $U: L^2 _{\wt{\Sigma}} \rightarrow L^2 _\Sigma$ is an isometry, $V_A : L^2 _\Sigma \rightarrow \K _A$ is a partial isometry with initial space $\K$ and
$W : L^2 _{\wt{\Sigma}} \rightarrow \K _A = \L (\wt{\Phi } _A)$ is an onto isometry. If $\ran{U}$ is not contained in $\ker{V_A } ^\perp$, then we could find an
$f \in L^2 _{\wt{\Sigma}}$ such that $Uf = g_\K + g _\perp$ with $g_\K \in \K$ and $g_\perp \neq 0$ in $L^2 _\Sigma \ominus \K$. But then it would follow that
$$ \| V_A U f \| = \| g _\K \| < \| f \|, $$ which would contradict the fact that $$\| V_A U f \| = \| W f \| = \| f \|.$$

Hence $$ U = V_A ^* V_A U =  V_A ^* W, $$ so that $$ U U ^* = V_A ^* W W^* V_A = V_A ^* \bm{1} _{\K _A} V_A = \bm{1} _\K. $$

\end{proof}

Now $W M_{\wt{\Sigma}} W^* = \mf{Z} _{\wt{\Phi} _A}$ acts as multiplication by $z$ in $\K _A = \L (\wt{\Phi } _A )$.
Also $V_A B_1 V_A ^*$ acts as multiplication by $z$ in $\K _A$ so that $V_A B_1  \subset \mf{Z} _{\wt{\Phi} _A} V_A $. This follows because
$V_A B_1  = U_A B_1  = Z_A U_A$, where $Z_A = U_A B_1 U_A ^*$ acts as multiplication by
$z$ in $\H _A \subset \K _A$.

 It follows that
$$ U^* B_1 = W^* V_A B_1 \subset W^* \mf{Z} _{\wt{\Phi} _A} V_A =  M _{\wt{\Sigma}} U^*. $$

Now if $f \in \dom{B_1} \subset \dom{M_\Sigma}$, then by the definition of the domain of $M _\Sigma$,
$$ \intfty \Sigma (dt) f(t) = 0.$$
We also have that $U^* f \in \dom{M_{\wt{\Sigma}}}$ so that
$$ 0 = \intfty \wt{\Sigma} (dt) D^{-1} (t) f(t)  = \intfty D^* (t) \Sigma (dt) f(t).$$

Alternatively if $B_1 ' =  M _{\wt{\Sigma}} | _{U^* \dom{B_1}} \simeq B_1$, then for any $g \in \dom{B_1 '}$ we have that $Ug \in \dom{M _\Sigma}$ so that
$$ \intfty \wt{\Sigma} (dt) g(t) = 0, $$ and
$$ 0 = \intfty \Sigma (dt) D(t) g(t) = 0. $$

In summary we have established the necessity half of:

\begin{thm}
Let $B_1 \in \sym{m}{\H _1}, B_2 \in \sym{n}{\H _2}$ with characteristic functions $\Theta _1$ and $\Theta _2$ (where we fix a choice of $\Theta _2$ to obey the condition of Remark (\ref{densecon})).
If $\Theta _1$ is inner then $B_1 \lessim B_2$ if and only if the following three conditions hold:
\bn
    \item There exists a contractive $\C ^{m \times m}-$valued analytic function $\Phi$ such that $\Phi \geq \Theta _1$.
    \item The Herglotz measure $\wt{\Sigma }$ of $\Phi$ is absolutely continuous with respect to the $\C^{n\times n}-$valued Herglotz measure $\Sigma$ of $\Theta _2$,
    $$ \wt{\Sigma} (dt) = D^* (t) \Sigma (dt) D(t), $$ for a $\C ^{m\times n}$ matrix-valued function $D(t)$ whose columns divided by $t+i$ belong to $L^2 _\Sigma$.
    \item Suppose that $A \in \Ext{B}$ is the extension such that $\Phi _A = \Phi$.  If $\wt{B} _1 := M _{\wt{\Sigma}} | _{W^* V_A \dom{B_1}}$ where
    $W : L^2 _{ \wt{\Sigma}} \rightarrow \K _A$ is the deBranges isometry, then for any $f \in \dom{\wt{B} _1}$ we have that
    $$ \intfty \wt{\Sigma} (dt) f(t) = 0 \quad \mbox{and} \quad \intfty \Sigma (dt) D(t) f(t)  = 0.$$
\en

\label{pochar}

\end{thm}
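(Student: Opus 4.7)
The proof splits into necessity and sufficiency, with the necessity direction essentially assembled in the discussion preceding the statement. For necessity, I would first reduce via Remark \ref{densecon} to the case $B_2 = M_\Sigma$ with $B_1 \subset B_2$ inside $L^2_\Sigma$, define $A \in \Ext{B_1}$ as the restriction of $\ms{A} := M^\Sigma$ to the smallest reducing subspace containing $\H_1$, and set $\Phi := \Phi[A; B_1]$. Theorem \ref{extchar} immediately gives $\Phi \geq \Theta_1$, establishing (1). Expanding an orthonormal basis $\{\wt{v}_k\}$ of $\ker{B_1^* + i}$ in terms of the cyclic vectors $\{v_k\}$ of $\ms{A}$, whose existence is guaranteed by Corollary \ref{innercyclic}, yields the factorization $\wt{\Sigma}(dt) = D^*(t)\Sigma(dt)D(t)$ with $D$ satisfying the required integrability, establishing (2). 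Claim \ref{isoid} together with the inclusion $U^* B_1 \subset M_{\wt{\Sigma}} U^*$ then delivers (3).

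For sufficiency, my plan is to construct an isometric embedding $\Psi : \H_1 \to L^2_\Sigma$ sending $B_1$ into $M_\Sigma = B_2$. Condition (1) combined with Theorem \ref{bijection} first produces $A \in \Ext{B_1}$ with $\Phi_A = \Phi$, so $\K_A = \L(\wt{\Phi}_A)$. Condition (2) makes multiplication by $D(t)$ an isometry $U : L^2_{\wt{\Sigma}} \to L^2_\Sigma$ that manifestly intertwines $M_{\wt{\Sigma}}$ with $M_\Sigma$. Next I would verify that the deBranges Cauchy transform isometry $W : L^2_{\wt{\Sigma}} \to \K_A$, determined by the Herglotz measure of $\Phi = \Phi_A$, satisfies $V_A U = W$; this is the same kernel-vector identification as in Claim \ref{isoid}, but now stripped of the prior assumption $B_1 \lessim B_2$. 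Setting $\Psi := U W^* V_A$, the identity $V_A U = W$ forces $\Psi$ to be isometric on $\H_1$, and condition (3) then ensures both $\Psi(\dom{B_1}) \subset \dom{M_\Sigma}$ and $\Psi B_1 = M_\Sigma \Psi$ on that domain, so $B_1' := \Psi B_1 \Psi^* \subset M_\Sigma = B_2$ and $B_1 \lessim B_2$ follows.

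The main obstacle I anticipate is reversing Claim \ref{isoid} in the sufficiency direction: in the necessity proof the identity $V_A U = W$ was visible directly from the concrete embedding of $B_1$ in $M_\Sigma$, whereas now one must recognize $V_A U$ and $W$ as the unique isometries into $\K_A$ that send the natural family $\frac{i}{\pi}(t - \ov{w})^{-1} \vec{v}$ in $L^2_{\wt{\Sigma}}$ to the reproducing kernel vectors $K_w^{\wt{\Phi}_A} \vec{v}$ of $\K_A$. A secondary point is the sharp characterization of $\dom{M_\Sigma}$ and $\dom{M_{\wt{\Sigma}}}$ in terms of $tf$-integrability together with a single integral vanishing condition, which is precisely what makes the two integral equations in (3) necessary and sufficient for $\Psi$ to respect the relevant domains.
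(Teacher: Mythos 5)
Your necessity argument is the paper's own, and your sufficiency construction $\Psi = U W^* V_A$ is also, in substance, the paper's composition; but the step you single out as the crux of sufficiency --- proving $V_A U = W$ ``stripped of the prior assumption $B_1 \lessim B_2$'' --- is not well-posed and would fail. In the sufficiency direction the extension $A \in \Ext{B_1}$ with $\Phi_A = \Phi$ supplied by Theorem \ref{bijection} acts on an abstract space $\K \supset \H_1$ that carries no given relation to $L^2_\Sigma$. The map $V_A$ is defined on $\K$, and its extension to all of $L^2_\Sigma$ used in Claim \ref{isoid} presupposes that $\K$ is the cyclic subspace of $L^2_\Sigma$ generated by $\H_1$ under $M^\Sigma$ --- i.e.\ exactly the embedding one is trying to construct. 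Since $U$ maps $L^2_{\wt{\Sigma}}$ into $L^2_\Sigma$, the composition $V_A U$ does not even make sense here, and no kernel-vector identification can rescue it: the identity $V_A U = W$ is a statement about how $B_1$ sits inside $M_\Sigma$, which is the conclusion, not an available ingredient.

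Fortunately that identity is also unnecessary, and the rest of your outline (which is the paper's proof) goes through without it. $\Psi = U W^* V_A$ is isometric on $\H_1$ simply because $V_A : \K \rightarrow \K_A$ is an isometry of $\K$ onto $\K_A$ (this uses cyclicity of $\ker{B_1^* + i}$, Corollary \ref{innercyclic}, i.e.\ that $\Theta_1$ is inner), $W : L^2_{\wt{\Sigma}} \rightarrow \K_A = \L (\wt{\Phi}_A)$ is the deBranges isometry onto $\K_A$, and $U$ is isometric by condition (2); no compatibility between $U$ and $V_A$ is needed. The intertwining comes from $V_A |_{\H_1} = U_A$ together with Proposition \ref{modelspace}, which gives $W^* V_A B_1 \subset M_{\wt{\Sigma}} W^* V_A$, and from the fact that $U$ acts as multiplication by $D(t)$; the only genuinely delicate point is the domain inclusion $U \dom{\wt{B}_1} \subset \dom{M_\Sigma}$, which is precisely what the second integral in condition (3) supplies (the first integral merely records membership in $\dom{M_{\wt{\Sigma}}}$, which here already follows from the intertwining). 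Excise the $V_A U = W$ step and justify the isometry and intertwining as above; then your argument coincides with the paper's proof.
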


\begin{proof}

To prove the sufficiency half of the above theorem, suppose that the above three conditions are satisfied and choose $A \in \Ext{B_1}$ so that $\Phi _A = \Phi$ (such an $A$ exists by Theorem \ref{bijection}).

We know that $B_1$ is unitarily equivalent to a restriction of $M _{\wt{\Sigma}}$. Here are the details:
Let $\wt{\Sigma}$ be the matrix-valued measure which is $\pi$ times the Herglotz measure for $\Phi _A$.
Let $W$ be the Cauchy transform isometry which takes $L^2 _{\wt{\Sigma}} $ onto $\K _A = \L (\wt{\Phi } _A ) $ where $\wt{\Phi} _A$ is the contractive analytic function
corresponding to $\wt{\Sigma}$. Then it is clear that $W^*V_A B_1  \subset W^* \mf{Z} _{\wt{\Phi} _A} V_A = M_{\wt{\Sigma}} W^* V_A$. Let $\wt{B} _1$ be the closure of $M_{\wt{\Sigma}}$ restricted to $W^* V_A \dom{B _1}$.

Let $U$ act as multiplication by $D(t)$.  The second condition in the above theorem ensures that $U: L^2 _{\wt{\Sigma}} \rightarrow L^2 _\Sigma$ is an isometry. The third condition in the above theorem ensures that this isometry $U: L^2 _{\wt{\Sigma}} \rightarrow L^2 _{\Sigma}$ maps $\dom{\wt{B} _1}$ into $\dom{M _{\Sigma}}$ and since $U$ acts as multiplication by $D(t)$, $U \wt{B} _1 \subset M_\Sigma U$. In conclusion,
$B_1 \simeq \wt{B} _1 \lessim M _\Sigma \simeq B_2$, so that $B_1 \lessim B_2$.
This proves the sufficiency of the above three conditions when $\Theta _1$ is inner.
\end{proof}

\begin{remark}
\label{dc2}
The technical assumption on the characteristic function $\Theta _2$ from Remark \ref{densecon} can be easily removed to obtain a fully general result:

Consider the Herglotz integral representation of the Herglotz function $G _{\Theta _2}$:
$$ \re{G_{\Theta _2 } (z) } = P \im{z} + \intfty \re{ \frac{1}{i\pi} \frac{1}{t-z} } \Sigma _2 (dt ). $$ By Theorem \ref{alexclark}, we see that there is a canonical self-adjoint extension
$\mf{Z} _{\Theta _2} (\bm{1} ) $ of $\mf{Z} _\Theta$ such that $\Phi [ \mf{Z} _{\Theta _2} (\bm{1} ) ; \mf{Z} _{\Theta _2} ] = \Theta _2$, and it follows that $P = \chi _{\{1\} } \left( b (\mf{Z} _{\Theta _2} (\bm{1} ) ) \right)$. In particular if $1$ is not an eigenvalue of the Cayley transform $U =  b (\mf{Z} _{\Theta _2} (\bm{1} ))$ of $ b (\mf{Z} _{\Theta _2} (\bm{1} ))$, then it follows from Section \ref{Herglotz} that the deBranges Cauchy transform isometry $W _{\Theta _2} : L^2 _{\Sigma _2 } \rightarrow \L (\Theta _2 )$ is onto. In this case, as in \cite[Section 3.5, Section 5.4]{AMR}, one can check that $W_{\Theta _2}$ implements a unitary equivalence between $\mf{Z} _{\Theta _2}$ and $M _{\Sigma _2}$, the symmetric operator of multiplication by $t$ in $L^2 _{\Sigma _2}$ on the domain
$$ \dom{M _{\Sigma _2} } = \{ f \in L^2 _{\Sigma _2} | \ tf \in L^2 _{\Sigma _2} ; \ \intfty \Sigma _2 (dt) f(t) = 0 \}, $$ and moreover that $W_{\Theta _2 } $ implements a unitary equivalence between $\mf{Z} _{\Theta _2} (\bm{1})$, and $M^{\Sigma _2}$, the self-adjoint operator of multiplication by $t$ in $L^2 _{\Sigma _2}$.

By \cite[Proposition 5.2.2]{Martin-uni}, it follows that the canonical unitary extension $b(B(U))$ for $U \in \mc{U} (n)$ has $1$ as an eigenvalue if and only if
$$ \ker{\lim _{z \rightarrow 1} \left( (\Theta _B \circ b^{-1}) (z) ^* - U ^* \right) } \neq \{ 0 \}, $$ where $z \in \D $ approaches $1$ non-tangentially.

Note that in particular
if $B_2$ is densely defined, then every canonical self-adjoint extension of $B_2$ is densely defined, and this happens if and only if no unitary extension of $b(B_2)$ has $1$ as an eigenvalue, so that in this case
$P =0$, and $W_{\Theta _2}$ is onto. More generally the Livsic characteristic function $\Theta _2$ of $B_2$ is really only defined up to conjugation by fixed unitary matrices. It follows that we can always fix a choice of $\Theta _2$ so that $\mf{Z} _{\Theta _2} (\bm{1} )$ does not have $1$ as an eigenvalue, so that $W _{\Theta _2} : L^2 _{\Sigma _2} \rightarrow \L (\Theta _2)$ is an onto isometry, and we can assume without loss of generality
that $B = M _{\Sigma _2}$. That is we fix a choice of $\Theta _2$ so that
\be \ker{\lim _{z\rightarrow 1} \left( (\Theta _2 \circ b^{-1})  (z) ^* - U ^* \right)} = \{ 0 \}.  \ee

Alternatively, and perhaps more satisfactorily, it should be possible to remove the technical assumption from Remark \ref{densecon} completely by re-expressing the conditions of the above Theorem in terms of spaces of square integrable functions on the unit circle, and the reproducing kernel Hilbert space on $\C \sm \T$ obtained by taking the Cauchy transforms of such spaces.  However as we have preferred to express our results in terms of $L^2$ spaces on the real line and Herglotz spaces on $\C \sm \R$, we will not develop the necessary machinery to pursue this here.
\end{remark}

\begin{eg}
\label{fdeg2}
    This example is a continuation of Example \ref{fdeg}. Recall that we defined $$ V := \left( \begin{array}{cc} 0 & 0 \\ 1 & 0 \end{array} \right),$$  $V  \in \ms{V} _1  (\C ^2 )$.

Now let $$ W := \left( \begin{array}{ccc} 0 & 3/5 & 4/5 \\ 1 & 0 & 0 \\ 0 & 0& 0  \end{array} \right).$$ It is straightforward to check that
$W \in \ms{V} _1 (\C ^3)$, and that $W | _{\ker{V} ^\perp} = V | _{\ker{V} ^\perp}$.
Also note that in Example \ref{fdeg} we defined $$ U := \left( \begin{array}{ccc} 0 & 3/5 & 4/5 \\ 1 & 0 & 0 \\ 0 & 4/5 & -3/5  \end{array} \right).$$ This is a unitary matrix, and
moreover $U | _{\ker{W} ^\perp} = W | _{\ker{W} ^\perp}$, so that $U$ is a unitary extension of $W$.  Note however, that as shown in Example \ref{fdeg} that $1$ is an eigenvalue of this choice of $U$.
Hence in order to apply Theorem \ref{pochar} we will instead work with a different canonical unitary extension of $W$.
Let  $$ X := \left( \begin{array}{ccc} 0 & 3/5 & 4/5 \\ 1 & 0 & 0 \\ 0 & -i4/5 & i3/5  \end{array} \right).$$ Then $X$ is a canonical unitary extension of $W$ and is hence also a unitary extension of $V \subseteq W$.
As before let $B := b^{-1} (V), T:= b^{-1} (W)$ and $A:= b^{-1} (X)$ so that $B \subset T \subset A$. Then $A \in \Ext{T}$ is a canonical self-adjoint extension of $T$, and $A \in \Ext{B}$ is a non-canonical extension of $B$
and $B \lessim T$. Let $\Theta _B $ and $\Theta _T$ be the characterisitic functions of $B , T$.

Our goal in this example is to verify that the three conditions of Theorem \ref{pochar} are satisfied. Recall that
$$ \Theta _B (z) = \left( \frac{z-i}{z+i} \right) ^2, $$ and also recall that by Theorem \ref{alexclark}, that since $X$ is a canonical unitary extension of $W$, that up to a unimodular constant,
$$   \Theta _T (z) = \Phi [ A = b^{-1} (X) ; T ] (z).$$ Since $\Theta _T$ is only defined up to unimodular constants, we can and do fix $\Theta _T = \Phi [A ; T]$. To show that the first condition of Theorem \ref{pochar} is satisfied, we need to calculate $\Phi [A;B]$,
and to verify that it is greater or equal to $\Theta _B$. Recall that we did a similar calculation for the unitary matrix $U$ which is a different unitary extension of $W$ in Example \ref{fdeg}.

We begin by calculating the Herglotz measure of $\Phi [A ; B]$. Use that $\ran{V} ^\perp = \ker{B^* +i }$ is spanned by $e_1$, so that if $\sigma _V$ is the Herglotz measure of $\phi [X ; V ] = \Phi [A ; B] \circ b^{-1}$,
that $$ \sigma _V (\Om ) =  \left( e_1 , P _X (\Om ) e_1 \right).$$ Again this is a probability measure and $\left| ( e_1 , \hat{b} _2 ) \right| = \left| ( e_1 , \hat{b} _3 ) \right| =: a$ so that
$$ 1 = \sum _{k=1} ^3 \left| ( e_1 , \hat{b} _k ) \right| ^2 = \frac{1}{6} + 2a, $$ $a = \frac{5}{12}$ and
$$ \sigma _V = \frac{1}{6} \delta _i + \frac{5}{12} \delta _\la + \frac{5}{12} \delta _{-\ov{\la}}. $$ Finally as before
$$ \Sigma _B = \pi  \frac{1}{3} \delta _{-1} + \pi (1 + \beta ^2 ) \frac{5}{12} \delta _\beta + \pi (1 + \beta ^{-2} ) \frac{5}{12} \delta _{\beta ^{-1}}. $$
As in Example \ref{fdeg}, if $\Phi _A := \Phi [A;B]$ then
$$G_{\Phi _A }(z)  = i \sigma _X (\{1 \} ) z + \intfty \frac{zt+1}{i(t-z)} \wt{\sigma} _X (dt), $$ where $\wt{\sigma} _X := \sigma _X \circ b$. Since $1$ is not an eigenvalue of $X$, this becomes
$$ G_{\Phi _A} (z) = -i \frac{1}{6} \frac{z-1}{z+1} -i \frac{5}{12} \frac{z\beta +1}{\beta -z}-i\frac{5}{12} \frac{z+\beta}{1-\beta z}. $$ Using that $\Phi _A = \frac{G_{\Phi _A } +1}{G_{\Phi _A } -1}$, and simplifying as in Example \ref{fdeg} shows that $\Phi _A$ is the product
of three Blaschke factors with zeroes at the roots of the polynomial:
$$ p(z ) := 2 (z-1)(\beta -z)(1-\beta z) +5 (z+1)(z\beta +1)(1-\beta z) +5 (z+1)(\beta -z)(1-\beta z) -12i (z+1)(\beta -z )(1-\beta z).$$ It is a bit more tedious to calculate the roots of this polynomial this time.
However it is not hard to check that $p(i) =0$, and one can verify that $p$ has a double root at $z=i$ and that the third root of $p$ is located at the point $\mu = \frac{i-4}{i+4} \in \C _+$.
It follows that up to a unimodular constant, $$ \Phi _A (z) = \left( \frac{z-i}{z+i} \right) ^2 \frac{z-\mu}{z-\ov{\mu}},$$ which is indeed greater or equal to $\Theta _B$.

We now show that the Herglotz measure of $\Phi [A ; B ]  (z)$, is absolutely continuous with respect to the Herglotz measure of $\Theta _T  = \Phi [A ; T ]$ so that
the second condition of Theorem \ref{pochar}  is also satisfied:

Let us calculate the Herglotz measure $\Sigma _T$ of $\Theta _T =\Phi [A ; T]$. Now $A = b^{-1} (X)$, and we calculate $\sigma _X$, the Herglotz measure of $\theta _X := \Theta _T \circ b^{-1}$,
as in Example \ref{fdeg} by calculating the spectral measure of the unitary matrix $X$. The determinant of $(z - X)$ can be calculated to be
$$ \mbox{det} (z -X ) = (z-i)(z-\la )(z +\ov{\la} )  =: p(z), \quad \la := \frac{2}{5} \sqrt{6} -i \frac{1}{5}. $$ The eigenvectors $\vec{b} _k$, $1\leq k \leq 3$ of $X$ to the
eigenvalues $\la _1 = i $, $\la _2 = \la$ and $\la _3 = - \ov{\la}$ are given by
$$ \vec{b} _k := ( 1, \ov{\la} _k , \frac{5}{4} \la _k - \frac{3}{4} \ov{\la _k} ) ^T. $$ If $\hat{b} _k := \frac{\vec{b} _k }{\| \vec{b} _k \| },$ then one can check that
$$ \hat{b} _1 = \frac{1}{\sqrt{6}} (1, -i , 2i ). $$ The spectral measure of $X$ is then
$$ P _X := \sum _{k=1} ^3 \left( \cdot , \hat{b} _k \right) \hat{b} _k \delta _{\la _k}, $$ and since $v = e_3$ spans $\ran{W} ^\perp = \ker{T^* +i}$,
$$ \sigma _X (\Om) = \left( e_3 , P_X (\Om ) e_3 \right). $$ Since $P_X$ is unital, this means that $\sigma _X$ is a probability measure so that
$$ 1 = \sum _{k=1} ^3 \left| ( e_3 , \hat{b} _k ) \right| ^2 = \frac{2}{3} + \left| (e_3 , \hat{b} _2 ) \right| ^2 +   \left| (e_3 , \hat{b} _3 ) \right| ^2.$$
Using that $\la _2 = \la = -\ov{\la} _3$, we get that $\| \vec{b} _2 \| = \| \vec{b} _3 \|$ and that $\left| ( e_3 , \hat{b} _2 ) \right| = \left| ( e_3 , \hat{b} _3 ) \right| =: a$ so that
$ 1 = \frac{2}{3} + 2a$ and $a = \frac{1}{6}$. In conclusion,
$$ \sigma _X = \frac{2}{3} \delta _i + \frac{1}{6} \delta _\la + \frac{1}{6} \delta _{-\ov{\la}}.$$ Now we use the fact that
$$ \Sigma _T  (\Om ) = \int _\Om \pi (1 +t^2 ) (\sigma _X \circ b) (dt), $$ to calculate that
$$\Sigma _T  = \pi \frac{4}{3} \delta _{-1} + \pi (1 + \beta ^2 ) \frac{1}{6} \delta _\beta + \pi (1 + \beta ^{-2} ) \frac{1}{6} \delta _{\beta ^{-1}}, $$
where $\beta := b^{-1} (\la )$.  It follows that the Herglotz measure $\Sigma _B$ of $\Phi [A; B]$ is indeed absolutely continuous with respect to the Herglotz measure $\Sigma _T$ of $\Theta _T = \Phi [A; T]$.

Note that one can calculate that up to a unimodular constant $$\Theta _T (z) = \frac{(z-i)(z-\mu _1 )(z - \mu _2)}{(z+i)(z-\ov{\mu _1} )(z - \ov{\mu _2})}, $$
where $$ \mu _1 := i (4 + \sqrt{15}), \quad \mbox{and} \quad \mu _2 = i (4 -\sqrt{15} ),$$ so that $\Theta _B$ is not a divisor of $\Theta _T$.

Finally we verify that the third condition of Theorem \ref{pochar} is satisfied. First we need to calculate the domain of $B = b^{-1} (V)$. We have that
$\ker{V} ^\perp$ is spanned by $e _1$, and $\dom{B} = (1-V) \ker{V} ^\perp$ so that $\dom{B}$ is spanned by the vector $(1 , -1)$ (or if we view $\C ^2$ as a subspace of $\C ^3$ and $B \subset T$
then this is the vector $(1, -1, 0 )$).

Let $\wt{\Sigma } := \pi \Sigma _B$, and let $\Sigma := \pi \Sigma _T$. Let $\wt{W} : L^2 _{\wt{\Sigma }} \rightarrow \L ( \wt{\Phi} [ A ; B ] ) $, and $W : L^2 _\Sigma \rightarrow \L (\wt{\Phi} [A ; T ])$ be the
corresponding deBranges isometries onto the Herglotz spaces. Also let $V _A : \C ^2 \rightarrow \L (\wt{\Phi} [A ; B ] ) =: \wt{\K}  _A $, where $\wt{\K} _A$ is the model reproducing kernel Hilbert space
defined using the extension $A \in \Ext{B}$ and $\Om _A (z) := (A+i) (A - \ov{z} ) ^{-1}  \wt{J}$ and $\wt{J} : \C \rightarrow \ker{B^* +i}$ is defined by $\wt{J} e_1 = e_1$ (here $e_1$ is a normalized basis vector for $\C$).

We need to calculate the image of $(1, -1)$ under the map $\wt{W} ^* V _A$ which takes $\dom{B}$ into $\dom{M _{\wt{\Sigma}}}$:

\ba (V_A (1, -1) ) (z) &  = & \Om _A (z) ^* (1 , -1) ^T  \nonumber \\
& =& \left( (A-i)(A-z) ^{-1} e_1 , e _1 \right) - \left(  (A-i) (A-z) ^{-1} e_2 , e _1 \right). \nonumber \ea  Using that $e_2 = X e_1$ where $X = b(A)$, we get this is
\ba (V_A (1, -1) ) (z) &  = & \left( (A-i)(A-z) ^{-1} e_1 , e _1 \right) - \left(  (A-i) (A-z) ^{-1} (A-i) (A+i) ^{-1} e_1 , e _1 \right) \nonumber \\
& =& \intfty \frac{t-i}{t-z} (1 -b(t)) \left( P_A (dt) e_1 , e_1 \right) \nonumber \\
& =& \frac{1}{i\pi} \intfty \frac{1}{t-z} \left( \frac{i}{\pi} \frac{1}{t+i} (1-b(t)) \right) \wt{\Sigma } (dt) \nonumber \\
& =& \wt{W} f (z), \nonumber \ea
where $f \in L^2 _{\wt{\Sigma }} $ is
$$ f (t) = (1 -b(t) ) \frac{i}{\pi} \frac{1}{t+i}.$$
We can now verify that $f \in \dom{M _{\wt{\Sigma}}}$ by checking that
$\intfty \wt{\Sigma} (dt) f(t) = 0$. This integral is equal to
$$  \intfty  \wt{\Sigma} (dt) f(t) = \frac{1}{3} \frac{1 - b(-1)}{-1 +i} + \frac{5}{12} (1 +\beta ^2 ) \frac{1 -b (\beta ) }{\beta +i} + \frac{5}{12} ( 1 + \beta ^{-2} ) \frac{1 - b (\beta ^{-1} )}{\beta ^{-1} +i }.$$
Now using that $b (-1) = i $ and $b (\beta ) = \la = -\frac{i}{5} + \frac{2}{5} \sqrt{6}$, this can be simplified to yield
$$ \intfty  \wt{\Sigma} (dt) f(t) = \frac{-1}{3} + \frac{5}{12} (2i \la  - 2i \ov{\la}) = 0, $$ so that indeed $\wt{W} ^* V_A (1, -1 ) ^T \in \dom{M _\Sigma}$.

To verify the final condition of Theorem \ref{pochar}, we need to show that if $U _{\Sigma } : L^2 _{\wt{\Sigma}} \rightarrow L^2 _\Sigma$ is the isometry which acts as multiplication by $D(t)$
where $$ \wt{\Sigma} (dt) = \ov{D(t)} \Sigma (dt) D(t), $$  then
$U_\Sigma f \in \dom{M _{\Sigma}}$. First we calculate $D(t)$ and $U _\Sigma$. We have by construction that
$$ \wt{\Sigma} (dt) = \pi ^2 (1 +t^2 ) \left( P_A (dt) e_1 , e_1 \right), $$ and now observe that $X e _1 = e_2$ and that
$ X e_2 = 3/5 e_1 -i 4/5 e_3 = 3/5 X^* e_2 -i4/5 e_3. $ Rearranging this yields $ e_2 = -i \frac{4}{5} (X^2 -3/5  ) ^{-1} X e_3$ so that
$e_1 = - i \frac{4}{5} ( X ^2 - \frac{3}{5} ) ^{-1} e_3$, where recall that $X = b(A)$. It follows that
\ba \left( P_A (dt) e_1 , e_1 \right) & = & \left(  i \frac{4}{5} (b(A) ^{-2} - 3/5) ^{-1} P_A (dt) \frac{4}{5i} (b(A) ^2 -3/5) ^{-1} e_3 , e_3 \right)  \nonumber \\
& =&  \ov{D(t) } \left( P_A (dt) e_3 , e_3 \right) D(t), \nonumber \ea
with $$ D(t) = -i \frac{4}{5} \frac{1}{b(t) ^2 - 3/5}. $$ Hence to complete the verification of the third condition of Theorem \ref{pochar}, we simply need show that
if
$$ g(t) := D(t) f(t) = \frac{4}{5 \pi} \frac{1}{b(t) ^2 - 3/5} \frac{1}{t+i} (1 - b(t) ) \in L^2 _\Sigma, $$ that
$$ \intfty \Sigma (dt) g(t) = 0. $$
Here is the calculation:
\ba \intfty \Sigma (dt) g(t) & =& \frac{4}{3} \frac{1}{i -3/5} \frac{1}{-1 +i} (i -1) + \frac{1}{6} (1 + b^{-1} (\la ) ^2 ) \frac{1}{\la - 3/5} \frac{1}{\la +i} (\la -1 ) \nonumber \\
& & + \frac{1}{6} (1 + \beta ^{-2} ) \frac{1}{\ov{\la} ^2 -3/5} \frac{1}{b^{-1} (-\ov{\la}) +i } ( - \ov{\la} -1) \nonumber \\
& =& \frac{-5}{6} + \frac{1}{6} \frac{-2i \la }{\la ^2 -3/5} + \frac{1}{6} \frac{2i \ov{\la}}{\ov{\la} ^2 -3/5} \nonumber \\
& =& \frac{-5}{6}  + \frac{i}{3} \left(  \frac{\ov{\la}}{\ov{\la} ^2 -3/5} - \frac{\la}{\la ^2 -3/5} \right) \nonumber \\
& =& -\frac{5}{6} + \frac{i}{3} \frac{8}{5} \frac{\la - \ov{\la}}{|\la ^2 -\frac{3}{5} | ^2} \nonumber \\
& =&  0. \nonumber \ea

In summary we have shown that if $f = \wt{W} ^* V_A (-e_1 +e _2)$, where $\dom{B}$ is spanned by $-e_1 + e_2$, that both
$$ \intfty \wt{\Sigma} (dt) f (t) = 0 \quad \quad \mbox{and} \quad \quad \intfty \Sigma (dt) D(t) f(t) = 0, $$ so that
the third and final condition of Theorem \ref{pochar} is satisfied.
\end{eg}

\section{Outlook}

There are several directions in which the results of this paper can be extended.

We have assumed throughout that $B \in \symm$ has an inner Livsic characteristic function.  A good portion of the theory we have developed here does not depend on this fact,
and it would be good to generalize the results contained here to the case where the Livsic function is an arbitrary contractive analytic function (vanishing at $z=i$).  We have done some
work on this already, in particular Example \ref{motive} can be generalized to show that if $\Theta \leq \Phi$ are arbitrary contractive analytic functions that there is a bounded
multiplier $V : \L (\Theta ) \rightarrow \L (\Phi )$ which intertwines $\mf{Z} _\Theta$ and $\mf{Z} _\Phi$. However it
is not clear whether $\mf{Z} _\Theta \lessim \mf{Z} _\Phi$ in this general case, or whether more general definitions of partial order, and extensions of a symmetric linear transformation are needed.
Also if $A \in \Ext{B}$ where $\Theta _B$ is not inner, then one can show that in general $\H _A $ is only boundedly contained in $\K _A$, and so is not just a Hilbert subspace. Once these results are successfully
generalized to arbitrary simple symmetric and isometric linear transformations with equal indices, a natural question is whether our partial order results can be extended to arbitrary contractions. Namely
given contractions $T_1, T_2$, perhaps one could define that $T_1 \lessim T_2$ if $T_1 \simeq T_1 ' \subseteq T_2$. Perhaps this could be accomplished by using the fact that the problem of unitary equivalence
of contractions is equivalent to the problem of unitary equivalence of partial isometries, see \cite[Theorem 1]{Halmos} and the discussion following it.

There should be several interesting consequences of the results already obtained in this paper.  For example as discussed in Remark \ref{Acyclic}, we can use the theory developed here to provide an alternate proof
of the Alexandrov isometric measure theorem, \cite[Theorem 2]{Alexiso}. In fact the result we obtain is a generalization of the operator theoretic result of Krein \cite[Chapter 1, Corollary 2.1]{Krein} which uses
the theory of entire symmetric operators and hence holds for the case where $\Theta _B$ is a meromorphic scalar-valued inner function.  We point out that this result of Krein can be used to prove the Alexandrov isometric measure theorem, and that de Branges has also proven this result in the case where $\Theta $ is meromorphic in his book \cite[Theorem 32]{dB}. Our generalization holds for arbitrary inner functions, and it should be
possible to extend this to vector-valued Hardy spaces and matrix-valued inner functions as well. Our theory should also allow us to extend the main result of \cite{Martin-near} to the
case of arbitrary inner functions and nearly invariant subspaces, as well as to vector-valued versions of nearly invariant subspaces.

Finally as discussed in Remark \ref{WeylTit}, there is a natural bijection between the sets $\Ext{B}$ and $\mr{POVM} (B)$, the set of all unital positive operator valued measures which diagonalize $B$.  It is easy to see
with an application of Naimark's dilation theorem that $\mr{POVM} (B)$ is a convex set, and we think it could be interesting to study the properties of this convex set, for example to determine its extreme points, and
to study its Choquet theory. It is known that $\mr{POVM} (B)$ is a face in the set of all unital positive-operator valued measures on $\R$ \cite[Theorem 13.6.3]{Robthesis}, and consequently that every projection valued
measure corresponding to a canonical $A \in \Ext{B}$ is an extreme point of this set (although this can be proven directly). Naimark has proven that if $B \in \sym{n}{\H}$ and $A \in \Ext{B}$ is self-adjoint in $\K$ where $\K \ominus \H$ is finite dimensional, then the positive operator-valued measure corresponding to $A$ is an extreme point of $\mr{POVM} (B)$ \cite{Naimark-extreme}. Moreover Gilbert has proven that if $B \in \sym{n}{\H}$, then
the set of all $Q \in \mr{POVM}(B)$ which correspond to $A \in \Ext{B}$ defined on $\K$ with $\K \ominus \H$ finite dimensional is dense in a natural topology on $\mr{POVM} (B)$ \cite{Gilbert-extreme}.  It could be interesting to see whether the extreme points of $\mr{POVM} (B)$ can be given a function theoretic characterization in terms of the characteristic functions $\Phi [A ; B]$ of the corresponding extensions of $B$.

\end{document}